\DeclareMathOperator{\blendpar}{bl}
\DeclareMathOperator{\hs}{hs}
\DeclareMathOperator{\hilbcomb}{Hilb}
\DeclareMathOperator{\pfhilbcomb}{PFHilb}
\DeclareMathOperator{\evmap}{ev}
\newcommand{\ferrers}{d}
\newcommand{\conduc}{\kappa}
\newcommand{\Flatletter}{\hspace {2.3pt}
  \kern 0.4pt
  \tilde{\kern -2.3pt\mathcal{F}l\kern -0.4pt}%
  \kern 0.4pt
}
\newcommand{\hbm}{\hspace {2.3pt}
  \kern 0.4pt
  \overline{\kern -2.3ptH\kern -0.4pt}%
  \kern 0.4pt
}
\newcommand{\rbar}{\hspace {2.3pt}
  \kern 0.4pt
  \overline{\kern -2.3pt R\kern -0.4pt}%
  \kern 0.4pt
}
\DeclareMathOperator{\inideal}{in}
\newcommand{\torusrotelt}{z}
\newcommand{\flaglat}{L}
\newcommand{\spclass}{Y}
\newcommand{\coinvid}{\mathfrak{m}^{S_n}_+}
\newcommand{\lshuffs}[1]{\text{Sh}_{#1}^{<}}
\newcommand{\rshuffs}[1]{\text{Sh}_{#1}^{>}}
\newcommand{\shuffs}[1]{\text{Sh}_{#1}}
\newcommand{\cycdec}[1]{\mathcal{D}_{#1}}
\newcommand{\indt}{\mathbf{ind}}
\newcommand{\zptwo}{\mathbb{Z}_{\geq 0}^2}
\DeclareMathOperator{\im}{Im}
\DeclareMathOperator{\fltaucomb}{Z}
\newcommand{\reshess}{p}
\newcommand{\schedhess}{q}
\newcommand{\hf}{h}
\newcommand{\blendlam}{\boldsymbol{\lambda}}
\DeclareMathOperator{\ind}{ind}
\DeclareMathOperator{\indres}{ind}
\newcommand{\frob}{\mathcal{F}}
\newcommand{\aux}{\mathrm{aux}}
\DeclareMathOperator{\latl}{Lat}
\DeclareMathOperator{\flatl}{FLat}
\newcommand{\descpf}{\majt}
\newcommand{\antisym}{\mathcal{N}}
\newcommand{\Par}{\mathcal{P}}
\DeclareMathOperator{\signrep}{sgn}
\DeclareMathOperator{\grdim}{grdim}
\DeclareMathOperator{\mdeg}{mdeg}
\DeclareMathOperator{\pfl}{PF}
\newcommand{\lpfl}[1]{\pfl_{#1}^{<}}
\newcommand{\rpfl}[1]{\pfl_{#1}^{>}}
\DeclareMathOperator{\schedpfl}{SchedPF}
\newcommand{\lschedpfl}[1]{\schedpfl_{#1}^{<}}
\newcommand{\rschedpfl}[1]{\schedpfl_{#1}^{>}}
\newcommand{\minosp}{\Pi}
\DeclareMathOperator{\osp}{OSP}
\newcommand{\invt}{\mathbf{inv}}
\newcommand{\majt}{\mathbf{maj}}
\DeclareMathOperator{\readword}{word}
\newcommand{\areat}{\mathbf{area}}
\newcommand{\coareat}{\mathbf{coarea}}
\newcommand{\bx}{{\mathbf{x}}}
\newcommand{\by}{\mathbf{y}}
\newcommand{\bz}{\mathbf{z}}
\newcommand{\ba}{\mathbf{a}}
\newcommand{\bb}{\mathbf{b}}
\newcommand{\bc}{\mathbf{c}}
\newcommand{\bk}{\mathbf{k}}
\newcommand{\gf}{\mathbb{C}}
\newcommand{\C}{\mathbb{C}}
\newcommand{\Z}{\mathbb{Z}}
\newcommand{\cL}{\mathcal{L}}
\newcommand{\latflag}{E_\bullet}
\newcommand{\parflag}{\lambda^\bullet}
\newcommand{\affperms}{W}
\newcommand{\extperms}{\hat{W}}
\newcommand{\fltaugeo}{\mathcal{Z}}
\newcommand{\lang}{\left\langle}
\newcommand{\rang}{\right\rangle}
\newcommand{\grquot}[3]{\Grr_{#1}(#2/#3)}
\newcommand{\grsup}{{g}}
\newcommand{\flsup}{{f}}
\newcommand{\dyckpath}{\pi}
\newcommand{\eps}{\varpi}
\newcommand{\cA}{\mathcal{A}}
\newcommand{\fg}{\mathfrak{g}}
\newcommand{\fghat}{\widehat{\mathfrak{g}}}
\newcommand{\fthat}{\widehat{\mathfrak{t}}}
\newcommand{\fsl}{\mathfrak{sl}}
\newcommand{\fslhat}{\widehat{\mathfrak{sl}}}
\newcommand{\What}{\widehat{W}}
\newcommand{\That}{\widehat{T}}
\newcommand{\flag}{\mathcal{F}l}
\newcommand{\Fl}{\tilde{\mathcal{F}l}}
\newcommand{\Gr}{\tilde{\mathcal{G}}r}
\newcommand{\affgrass}{\tilde{\mathcal{G}}r}
\newcommand{\agrass}{\tilde{\mathcal{G}}\mathrm{r}}
\newcommand{\grr}{\mathcal{G}\mathrm{r}}
\newcommand{\Sp}{\tilde{\mathcal{S}}}
\newcommand{\tS}{\tilde{S}}
\newcommand{\tSn}{W}
\newcommand{\spres}{\mathfrak{u}}
\newcommand{\Tres}{U}
\newcommand{\sig}{\sigma}
\newcommand{\Aaf}{\mathbb{A}_{af}}
\newcommand{\aaf}{\Aaf}
\newcommand{\aafo}{\aaf^0}
\newcommand{\Saf}{\mathbb{S}_{af}}
\newcommand{\sktof}{q}
\newcommand{\affsptorus}{\spres}
\DeclareMathOperator{\hesscomb}{Hess}
\DeclareMathOperator{\sched}{sch}
\newcommand{\PF}{PF}
\newcommand{\awg}{W}
\newcommand{\rotelt}{\rho}
\newcommand{\shiftelt}{\psi}
\newcommand{\mplusxy}{\coinvid(\bx,\by)}
\newcommand{\runs}{\mathbf{r}}
\newcommand{\run}{r}
\DeclareMathOperator{\scheds}{Sched}
\newcommand{\hesspav}{{Y^\circ}}
\newcommand{\hessleq}{{Y}}
\newcommand{\ressched}{\beta}
\newcommand{\respf}{\alpha}
\DeclareMathOperator{\extcomb}{ext}
\DeclareMathOperator{\latcomb}{lat}
\newcommand{\extmap}{ext}
\DeclareMathOperator{\moda}{mod\ }
\DeclareMathOperator{\Ad}{Ad}
\DeclareMathOperator{\ldeg}{ord}
\DeclareMathOperator{\Ext}{Ext}
\DeclareMathOperator{\Lie}{Lie}
\DeclareMathOperator{\Sym}{Sym}
\DeclareMathOperator{\Res}{Res}
\newcommand{\lRes}[1]{\Res^{<}_{#1}}
\newcommand{\rRes}[1]{\Res^{>}_{#1}}
\DeclareMathOperator{\Stab}{Stab}
\DeclareMathOperator{\codim}{codim}
\DeclareMathOperator{\Hom}{Hom}
\DeclareMathOperator{\gr}{gr}
\DeclareMathOperator{\sort}{sort}
\DeclareMathOperator{\area}{area}
\DeclareMathOperator{\maj}{maj}
\DeclareMathOperator{\comaj}{comaj}
\DeclareMathOperator{\inv}{inv}
\DeclareMathOperator{\cars}{cars}
\DeclareMathOperator{\dinv}{dinv}
\DeclareMathOperator{\Spec}{Spec}
\newcommand{\ghilbgeo}{\mathcal{G}}
\newcommand{\hilbgeo}{\mathcal{H}\mathrm{ilb}}
\newcommand{\hessgeo}{\mathcal{H}\mathrm{ess}}
\newcommand{\philbgeo}{\hilbgeo_0}
\newcommand{\pfhilbgeo}{\mathcal{PFH}\mathrm{ilb}}
\newcommand{\ppfhilbgeo}{\pfhilbgeo_0}
\newcommand{\pfghilbgeo}{\mathcal{PFG}}
\newtheorem{thm}{Theorem}
\newtheorem{lemma}{Lemma}
\newtheorem{cor}[lemma]{Corollary}
\newtheorem{prop}{Proposition}
\theoremstyle{definition}
\newtheorem{defn}{Definition}
\newtheorem{remark}{Remark}
\newtheorem{ex}{Example}
\newtheorem*{thm1}{Theorem A}
\newtheorem*{thm2}{Theorem B}
\newtheorem*{thm3}{Theorem C}
\newtheorem*{cor1}{Corollary A}
\newtheorem*{shuffthm0}{Theorem 1}
\newtheorem*{shuffthm1}{Theorem 1$'$}
\newtheorem*{shuffthm2}{Theorem 1$''$}
\numberwithin{thm}{section}
\numberwithin{obs}{section}
\numberwithin{prop}{section}
\numberwithin{lemma}{section}
\numberwithin{conj}{section}
\numberwithin{defn}{section}
\numberwithin{remark}{section}
\numberwithin{equation}{section}
\newcommand{\calO}{\mathcal{O}}
\newcommand{\calK}{\mathcal{K}}
\newcommand{\ZZ}{\mathbb{Z}}
\newcommand{\CC}{\mathbb{C}}
\newcommand{\frg}{\mathfrak{g}} 
\newcommand{\grH}{\mathfrak{H}
^{\mathrm{gr}}}
\newcommand{\frt}{\mathfrak{t}}
\newcommand{\gext}{\underline{\mathrm{ext}}}
\newcommand{\gl}{\mathfrak{gl}}
\DeclareMathOperator{\mmod}{mod}
\newcommand{\Grr}{\mathcal{G}\mathrm{r}}
\newcommand{\gFl}{\mathcal{F}l}
\title{Affine Schubert calculus and double coinvariants}
\author{Erik Carlsson}
\address{Department of Mathematics\\
Mathematical Sciences Building \\
One Shields Ave.\\
University of California
Davis, CA 95616}
\email{ecarlsson@math.ucdavis.edu}
\author{Alexei Oblomkov}
\address{Department of Mathematics and Statistics\\
Lederle Graduate Research Tower\\
University of Massachusetts Amherst\\
710 N. Pleasant Street\\
Amherst, MA 01003-9305, USA}
\email{oblomkov@math.umass.edu}
\subjclass[2020]{05A15,20G25}
\begin{document}

\begin{abstract}
  We define an action of the double coinvariant algebra
  $DR_n$ on the equivariant Borel-Moore homology of the affine flag variety
  $\Fl_n$ in type $A$, which has an explicit form in terms of
  the left and right action of the (extended) affine Weyl group and multiplication by Chern classes.
  Up to first order in the augmentation ideal, we show that it coincides with the
  action of the Cherednik algebra on the equivariant homology of
  the homogeneous affine Springer fiber $\Sp_{n,n+1} \subset \Fl_n$
  due to Yun and the second author \cite{OblomkovYun16}, and therefore
  preserves the non-equivariant Borel-Moore homology groups
$\hbm_*(\Sp_{n,n+1})\hookrightarrow \hbm_*(\Fl_n)$.
  We then define a geometric filtration 
  $F_{\ba} \hbm_*(\Sp_{n,n+1})=\hbm_*(\hessleq(\ba))$
  by closed subspaces $\hessleq(\ba)\subset \Sp_{n,n+1}$,
  which we prove recovers the Garsia-Stanton descent order on $DR_n$.  
We use this to deduce an explicit monomial basis of $DR_n$, as well as an independent proof of the (non-compositional) Shuffle Theorem \cite{haglund2005conjectured,CarlssonMellit12}.
\end{abstract}

\maketitle

\tableofcontents
\section{Introduction}
\label{sec:introduction}

The double (or diagonal) 
coinvariant algebra is the quotient
space of the polynomial algebra
$\gf[\mathbf{x},\mathbf{y}]=\gf[x_1,\dots,x_n,y_1,\dots,y_n]$
in $2n$ variables by the ideal generated by nonconstant
diagonally symmetric polynomials
\[DR_n=\gf[\bx,\by]/\mplusxy,\quad
  \mplusxy=\lang \sum_{k} x_k^i y_k^j:
  (i,j)\neq (0,0)\rang.\]
Since $\coinvid(\bx,\by)$ is doubly homogeneous, we find that $DR_n$ is a doubly graded vector space by the degree in the $x$ and $y$ variables respectively. Additionally, there is a \emph{diagonal} action of $S_n$ on $DR_n$ by
\[(\sigma f)(\bx,\by)= f(\bx_{\sigma},\by_{\sigma}),\]
where $\bx_\sigma=(x_{\sigma_1},...,x_{\sigma_n})$ and similarly for $\by_\sigma$.
This space was studied by Haiman,
who also proved that this space has dimension $(n+1)^{n-1}$ \cite{Haiman02}.

In \cite{haglund2005conjectured}, Haglund and Loehr conjectured the combinatorial formula for the bigraded Hilbert series
of $DR_n$ in terms of certain parking function statistics called $\area$ and $\dinv$, given in Definition~\ref{def:dinv} of Section \ref{sec:ratpark}:
\begin{equation}
    \label{eq:introhagloehr}
    \grdim_{q,t} DR_n=\sum_{P \in \pfl(n)} q^{\dinv(P)}t^{\area(P)}
=\sum_{\tau \in S_n} t^{\maj(\tau)} \prod_{i=1}^n [\sched_i(\tau)]_q.
\end{equation}
We also have the more general \emph{Shuffle conjecture}, which also encodes the character of the action of the symmetric group on $DR_n$.
The Shuffle conjecture was first proven 
by the first author and Mellit
in \cite{CarlssonMellit12}, as well as the more general ``rational'' case
by Mellit \cite{Mellit16}.
On the right hand side of \eqref{eq:introhagloehr}, the statistic $\maj(\tau)$ is the major index, $[k]_q=1+\cdots+q^{k-1}$ is the $q$-number, and
$\sched_i(\tau)$ are certain positive integers vectors known as ``schedules''
\cite{hicks2005thesis,haglund2008catalan}.
The schedules version of the Shuffle Theorem will be particularly relevant in this paper.

Separately, several articles due to Lusztig-Smelt \cite{LusztigSmelt91},
Gorsky-Mazin \cite{GorskyMazin2013,GorskyMazin2014}, Hikita \cite{hikita2014affine}, and Gorsky-Mazin-Vazirani \cite{gorsky2014rational} (see Section~\ref{sec:affine-permutations}), have connected the combinatorics of a rational extension of the Haglund-Loehr formula with a basis of the homology of certain affine Springer fiber in type $A$ denoted $\Sp_{n,m}\subset \Fl_n$. { Here $\Fl_n$ is the
\emph{affine flag variety} in type A, defined in Section
\ref{sec:geometryFl}}.


{ In another direction}, 
the second author and Yun have shown that the cohomology of $\Sp_{n,m}$ is an irreducible module
\(\mathfrak{L}_{m/n}(triv)\)
over the rational Cherednik algebra \(\mathfrak{H}^{rat}_{m/n}\)
\cite{OblomkovYun16}, see Section~\ref{sec:cher-algebra-acti} for more details.
It was known from \cite{Gordon03} that
there is an isomorphism of \emph{singly} graded spaces $DR_n \cong  \mathfrak{L}_{(n+1)/n}(triv)$. The grading in the isomorphism is the
anti-diagonal grading: \(\deg(x_k)=-\deg(y_k)=1\) for all \(k\).
On the rational Cherednik algebra side this grading appears naturally from the \(\mathfrak{sl}_2\) action on \(\mathfrak{H}^{rat}_{m/n}\) \cite{BerestEtingofGinzburg03}.


The action of $\mathfrak{H}^{rat}_{m/n}$ on $H^*(\Sp_{n,m})$ is closely related to
an action of the affine Weyl group
\begin{equation}
\label{eq:introaffweyl}    
W=\left\{w: \mathbb{Z}\rightarrow \mathbb{Z}:
w_{i+n}=w_i+n,\ \sum_{i=1}^n w_i=n(n+1)/2\right\}.
\end{equation}
which is essentially the Springer action
on $H^*(\Sp_{n,m})$.

The description of Kostant and Kummar \cite{kostant1986nil} of the cohomology of the affine flag variety $H^*(\Fl)$  (see Proposition \ref{prop:kostantkumar}) in terms of 
nill Hecke algebras yields
two actions of the group
\[\What=\left\{w: \mathbb{Z}\rightarrow \mathbb{Z}:
w_{i+n}=w_i+n \right\}.\]
on \(H^*(\Fl\times \ZZ)\), left and right.
The subgroup \(W\subset \What\) preserves \(H^*(\Fl\times 0)=H^*(\Fl)\)  and
the restriction map intertwines the above \(W\)-action  on \(H^*(\Sp_{n,m})\) with the right action of $W$ on $H^*(\Fl)$

{ Using the} two \(\What\)-actions on \(H^*(\Fl\times \ZZ)\) we can construct the operator
$\Ad_\rho$ of conjugation by $\rho\in \What$ (see Definition \ref{def:xyaction})
on $H^*(\Fl\times 0)=H^*(\Fl)$,
where $\rho_i=i+1$ is an ``extended'' affine permutation.
The right \(W\)-action and \(\Ad_\rho\) have versions in 
Borel-Moore homology 
$\hbm_*(\Fl)$, as well as their
equivariant versions.

The first main result of this paper defines an action of $DR_n$ on $\hbm_*(\Fl)$
in type A. In this construction, 
the $x_i$ variables act by multiplication by Chern classes of the natural line bundles. 
The $y_i$ variables are defined in terms of 
the operators from the previous paragraph by
\begin{equation}
\label{eq:introyi}    
y_i=z_i-1,\ \ 
z_i(f)= \rho f \psi_i^{-1}= \Ad_\rho(f) (\rho \psi_i^{-1}),\quad f\in\hbm_*(\Fl)
\end{equation}
where $\rho$ is as above and $\psi_i:\mathbb{Z}\rightarrow \mathbb{Z}$ is the extended affine permutation
\begin{equation*}
{\psi_i(j)}=\begin{cases} j+n & j \cong i\ \ (\mmod\ n) \\ j & \mbox{otherwise} \end{cases},\end{equation*}
such that \(\rho\psi^{-1}_i\in W\).
We also let $\Delta_n \in \hbm_*(\Fl)$ 
be the Schubert class associated to the permutation $w_0=(n,...,1)\in S_n \subset W$.

Our first theorem determines an action of
$DR_n$ on
$\hbm_*(\Sp_{n,n+1})$, which can be realized as a subspace $\hbm_*(\Fl)$
by Proposition~\ref{injsurjprop} and       Theorem~\ref{thma} below.
\begin{thm1}
  \label{thm1}
  The operators $x_i,y_i$ define an action of $DR_n$ on 
  the { Borel-Moore} homology of the affine
  flag variety that preserves 
  $\hbm_*(\Sp_{n,n+1}) \subset \hbm_*(\Fl)$ for coprime $(n,m)$.
  This action induces an isomorphism
  $\hbm_*(\Sp_{n,n+1}) \cong DR_n$
  by applying $f\in DR_n$ to the generator
  $\Delta_n$.
\end{thm1}

To prove Theorem A, we explicitly construct the
action of  \(\C[\mathbf{x},\mathbf{y},\epsilon]\) on the equivariant Borel-Moore homology
\(\hbm_*^{\CC^*}(\Fl)\) of the affine flag variety \(\Fl\), inducing an action of
$\C[\mathbf{x},\mathbf{y}]$ on nonequivariant homology.
Here $\epsilon$ is the additional parameter coming from the action of the torus.
We then show that this action agrees up to first order in $\epsilon$
with the noncommutative 
action { of} Oblomkov and Yun \cite{OblomkovYun16} that preserves
the subspace \(\hbm_*^{\mathbb{C}^*}(\Sp_{n,n+1})\hookrightarrow H^{\mathbb{C}^*}_*(\Fl)\), which is an injection by  Proposition~\ref{injsurjprop} and       Theorem~\ref{thma}.
Then the compatibility of our action
with the surjection
$\hbm_*^{\C^*}(\Sp_{n,n+1})\rightarrow
\hbm_*(\Sp_{n,n+1})$
shows that
$\C[\mathbf{x},\mathbf{y}]$ preserves the subspace
$\hbm_*(\Sp_{n,n+1})$ in the nonequivariant setting as well.

The rest of the proof is based on 
affine Schubert calculus, 
as discussed in \cite{lam2014book}, 
in which one identifies 
$\hbm_*(\Fl)\cong \Lambda$,
where $\Lambda=R_n(\bx) \otimes \C[h_1,...,h_{n-1}]$ is the one-variable coinvariant algebra \(R_n(\bx)\) tensored with a polynomial ring generated by the complete symmetric functions $h_1,...,h_{n-1}$.
We give explicit formulas
for the action of $x_i,y_i$ on $\Lambda$, 
which are used to show that $x_i,y_i$ commute, and that
$\coinvid(\bx,\by)$ acts by zero.
In order to show that $DR_n$ injects into
$\Lambda$, we use a result of Haiman \cite{Haiman02} regarding the restriction map of sections of the Procesi bundle on the punctual Hilbert scheme to a certain affine open subspace. This suggests an interesting interpretation of $\Lambda$ as the space of sections of a vector bundle on a certain dense open subset of the Hilbert scheme 
of points on $\C^2$, see Remark~\ref{rem:open-Haiman}.

In our second result, we consider a 
family of closed topological subspaces \[\hessleq(\ba)\subset \Sp_{n,n+1}\]
for $\ba=(a_1,...,a_n)\in 
\mathbb{Z}_{\geq 0}$, defined as unions of 
cells $Y_w^\circ \subset \Sp_{n,n+1}$,
which are the
Schubert cells 
{ in the affine flag variety} 
intersected with $\Sp_{n,n+1}$. These subspaces have the property that
$\hessleq(\ba)\subset \hessleq(\bb)$
whenever $\ba \leq_{des} \bb$,
where $\leq_{des}$ is the Garsia-Stanton descent order
\cite{garsia1984group,allen1994descent}
defined by
$\ba\leq_{des} \bb$ if
\begin{enumerate}
    \item $\sort_>(\ba)<_{lex} \sort_>(\bb)$, or
    \item $\sort_>(\ba)= \sort_>(\bb)$
and $\ba \leq_{lex} \bb$.    
\end{enumerate}
Here $\sort_>(\ba)$ sorts $\ba$ in reverse order to obtain a partition, and $\leq_{lex}$ is the usual lexicographic order. 
The standard monomials of $R_n(\by)$ with respect to this order are known as the Garsia-Stanton descent basis, given by
\begin{equation}
\label{eq:introdescmon}    
g_\tau(\by)=\prod_{i\in \mathrm{Des}(\tau)}(y_{\tau_1}\cdots y_{\tau_i}).
\end{equation}
where $\tau\in S_n$ ranges over the usual permutations,
and $\mathrm{Des}(\tau)$ is the set of indices $1\leq i\leq n-1$ for which $\tau_{i}>\tau_{i+1}$. The exponent
vector in $\by^{\ba}=g_{\tau}(\by)$ is denoted
$\ba=\majt(\tau)$, so that 
the degree of $g_\tau(\by)$ is the major index $\maj(\tau)=|\majt(\tau)|$.

We then consider the filtration on the Borel-Moore homology defined by taking $F_{\ba}\hbm_*(\Sp_{n,n+1})$ to be the image of
$\hbm_*(\hessleq(\ba))$ under the pushforward of the inclusion map, which is injective by Lemma \ref{desbrulebm}, which relates the descent order to the Bruhat order.
Under the isomorphism
$\hbm_*(\Sp_{n,n+1}) \cong DR_n$ from Theorem A,
we obtain a filtration $F_{\ba} DR_n$ by vector subspaces, which are in fact $\C[\bx]$-submodules,
due to the fact that the $x_i$ act by Chern classes, and so are compatible with inclusion maps by the projection formula. 
Our second \st{main} theorem interprets $F_\ba DR_n$ in terms of the descent order on the monomials $\by^{\ba}$, and uses it to produce a monomial basis $DR_n$:

\begin{thm2}
Let $F_\ba DR_n$ as above, and let 
$G_\ba DR_n=F_\ba DR_n/F_{<_{des} \ba} DR_n$ be the associated graded components. Then
  \begin{enumerate}[a)]
  \item We have that $F_\ba DR_n$ is the $\C[\bx]$-submodule of $DR_n$ spanned by the monomials $\by^{\ba'}$
  for $\ba'\leq_{des} \ba$.
  \item \label{item:introbasis} We have a vector space basis of $DR_n$ given by 
  \begin{equation}
  \mathcal{B}_n=
      \{\by^{\majt(\tau)} x_{\tau_1}^{k_1}\cdots x_{\tau_n}^{k_n}
:\tau \in S_n,\  0\leq k_i \leq \sched_i(\tau)-1     
      \}
  \end{equation}
\item As a $\C[\bx]$-module, $G_\ba DR_n$ is zero unless $\ba=\majt(\tau)$ for some $\tau$, in which case it is isomorphic to the principal ideal
$(f_\tau(\bx))$ in $R_n(\bx)$ where
 \[f_{\tau}(\bx)=
 x_{\tau_1}\cdots x_{\tau_{n-l}}
 \prod_{i=1}^n \prod_{j=i+\sched_i(\tau)+1}^{n} (x_{\tau_i}-x_{\tau_j}),\]
and $l$ is the multiplicity of zero in $\ba$.
  \end{enumerate}  
\end{thm2}

Noting that $\sched_i((1,...,n))=n-i+1$,
we see that the monomial basis in item
\ref{item:introbasis}) interpolates between the Garsia-Stanton basis of $R_n(\by)$
and the standard Artin (or sub-staircase) basis of $R_n(\bx)$, which are subspaces of $DR_n$.
As an immediate consequence, we recover the schedules version of \eqref{eq:introhagloehr}.

Using elementary arguments combined with the
\emph{ungraded} version of the 
Shuffle Theorem, which follows from 
Proposition 3.13 of \cite{Haiman02}, 
we are then able to recover the full bigraded Frobenius character of $DR_n$. For any
Young subgroup
$S_\mu=S_{\mu_1}\times \cdots \times S_{\mu_l}\subset S_n$, we can identify the corresponding anti-invariants 
using the anti-symmetrization operator
\begin{equation}
\label{eq:introcoranti}    
(DR_n \otimes \signrep)^{S_\mu}=
\antisym_\mu DR_n,\ \ 
\antisym_\mu f(\bx,\by)=\sum_{\sigma \in S_{\mu}} \signrep(\sigma) f(\bx_{\sigma},\by_\sigma).
\end{equation}
Then the following corollary implies an independent proof of the (non-compositional) restated Shuffle Theorem when combined with \eqref{eq:drntonabla} (
see Theorem~\ref{thm:shuffpf} and the discussion after for more details):
%
\begin{cor1}
For any composition $\mu$ we have a basis of
$(DR_n \otimes \signrep)^{S_\mu}$ given by
\begin{equation}
    \label{eq:youngbasis}
    \mathcal{B}_\mu=\left\{
    \antisym_{\mu} \big(
    \mathbf{y}^{\mathbf{area}(P)}_{\sigma}
    \mathbf{x}^{\mathbf{dinv}(P)}\big):P=(\pi,\sigma)\in\mathrm{PF}^{>}_\mu(n)\right\},
\end{equation}
where $\mathrm{PF}^{>}_\mu(n)$ is a collection
of $\mu$-sorted parking functions as formulated in 
Theorem 1 below.
\end{cor1}

Though it is not needed to prove
Corollary A, 
a result of
 Borho-MacPherson \cite{borho1983partial}
 shows that the $S_\mu$- anti-invariants of the Weyl group action on the usual nilpotent Springer fibers recover the cohomology of the Springer fibers in partial flag varieties.
This suggests that using a version of
 Borho-MacPherson result for the affine Springer fiber 
 and studying Schubert cells in the parabolic case may produce a proof
 of Corollary A that does not rely on the ungraded Shuffle Theorem either.

 We are optimistic that the methods of the 
 paper, when combined with the more general results of \cite{OblomkovYun16,OblomkovYun17}, could be extended to the rational Shuffle Theorem, corresponding to more general affine Springer fiber \(\Sp_{n,m}\), as well as more general root systems.  In particular, the recent construction of the monomial basis for the diagonal super-coinvariants, due to  Haglund and Sergel
 \cite{HaglundSergel21}, begs for a geometric interpretation, possibly similar to the setting of the current paper.

The  proof of Theorem B is more involved than that of Theorem A. It involves studying the  action of $x_i,y_i$ on
equivariant Borel-Moore homology
in the fixed point basis, 
and identifying, as explained in Lemma~\ref{zinfalem}, the nonzero matrix elements in this torus fixed-point basis.
{ This uses} a combinatorial bijection between
$\Sp_{n,n+1}^{\C^*}$ and parking functions, as described in Section \ref{sec:hesscomb}.
In this way, 
certain subsets of parking functions
called $\cars(\tau)$ correspond to the torus fixed points called $\Res(\tau)\subset \Sp_{n,n+1}$,
which appear in $\hessleq(\ba)$, but not $\hessleq(\ba')$ for any $\ba'<_{des} \ba$, see Lemma \ref{zinfalem}.
The statistics such as $\dinv$ are used in dimension arguments. 

The critical step is to show that the monomials in item \ref{item:introbasis}) are linearly independent in
$G_\ba DR_n$, which is done in Corollary \ref{cor:gabasis} below. To do this,
we translate this into a statement that their equivariant versions
are $\C[\epsilon]$-independent in
$\hbm_*^{\C^*}(\Sp_{n,n+1})$,
and consider their expansions in the fixed point basis, 
enumerated by $\Res(\tau)$. We then produce another bijection which
identifies $\Res(\tau)$ with a subset 
$\mathrm{Hess}(\tau)\subset \hesscomb(h_\tau)$ of the torus fixed points of the regular nilpotent Hessenberg variety denoted
$\hessgeo(N,h_\tau)$, where $h_\tau$ is a certain combinatorial Hessenberg function associated to $\tau$. We then translate this back into geometry, making use of a certain monomial basis of $H^*(\hessgeo(N,h))$ presented
in \cite{harada2021filtration}.

The argument we just described is clearly ultimately a geometric one, which we make precise in our third result. Specifically,
 given a permutation $\tau\in S_n$, let $\hesspav(\tau)$ denote the complementary subspace given by
\begin{equation}
\label{eq:defsptau}
    \hesspav(\tau)=\hessleq(\ba)-\bigcup_{\ba'<_{des} \ba} \hessleq(\ba')\subset \Sp_{n,n+1},\ \ 
    \ba=\majt(\tau).
  \end{equation}
These spaces yield a paving of the affine Springer fiber:  \(\Sp_{n,n+1}=\bigsqcup_{\tau\in S_n} \hesspav(\tau).\)
We prove the following, listed as Theorem \ref{thm:geo}:
\begin{thm3} We have that
$\hesspav(\tau)$,\(\tau\in S_m\) is isomorphic to a local trivial
affine bundle 
over a smooth base, which is the intersection of a certain Schubert variety \(\fltaugeo(\tau)\)
inside the usual flag variety \(\flag_n\) with
the regular nilpotent Hessenberg variety 
$\hessgeo(N,h)\subset \flag_n$ for a particular 
Hessenberg function $h=h_\tau$.
\end{thm3}
Like the affine Springer fiber, these 
Hessenberg varieties have an affine paving by intersected Schubert cells by a result of Tymoczko
\cite{tymoczko2007paving}, 
as do their intersection with $\fltaugeo(\tau)$ (see Lemma \ref{lem:pfhcells}). 
Since $\hesspav(\tau)$ is paved by affines, its Borel-Moore homology is nonzero only in even degree, and we have a short exact sequence in Borel-Moore homology
\begin{equation}
0\rightarrow \sum_{\ba'<_{des} \ba}\hbm_*(\hessleq(\ba'))\rightarrow
\hbm_*(\hessleq(\ba))\rightarrow
\hbm_*(\hesspav(\tau)).
\rightarrow 0
\end{equation}
Combining this with Theorem C establishes that $G_\ba(DR_n) 
\cong \hbm_*(\hessgeo(\tau))$ up to a grading shift, which is stated
in Corollary \ref{cor:ftau}.
This explains 
the appearance of Hessenberg varieties
{ in the aforementioned} bijection between 
$\Res(\tau)$ with $\hesscomb(\tau)$,
which are precisely the fixed point 
sets of $\hesspav(\tau)$ and $\fltaugeo(\tau)\cap \hessgeo(N,h_\tau)$ respectively.

The proof of Theorem C relies on a slight generalization of the duality between the
Hilbert schemes and the stable pairs from \cite{PandharipandeThomas10}.
We generalize the duality to the setting of the flags of stable pairs and show that the
flag stable pairs are exactly affine Springer fibers studied in \cite{OblomkovYun16}. To prove the bundle statement 
and connect this with Hessenberg varieties, 
we use results of Iarrobino and G\"{o}ttsche \cite{iarrobino1977punctual,gottsche1990punctual} on the Hilbert scheme of homogeneous ideals $I\subset \C[x,y]$ with respect to total
degree, which turn out to be isomorphic to the affine Grassmannian 
(or Catalan) version of $\hesspav(\tau)\rightarrow \hessgeo(\tau)$.
To generalize this to the full flagged case, we extend these
arguments to a space called the \emph{parabolic flag Hlibert scheme},
which was introduced and shown to be smooth in \cite{carlsson2017parabolic}.

The paper is divided into six sections. In Section~\ref{sec:geometry} we discuss the geometric results and definitions that we will need for the main construction, 
including the results of \cite{OblomkovYun16}.
In the interest of making
our paper readable to combinatorialists, we have compartmentalized
the necessary algebraic
facts from this section into Proposition \ref{prop:spbasis} of Section \ref{sec:affine-schub-calc},
so that it may be safely skipped.
In Section~\ref{sec:affine-perm-park} we recall combinatorial facts about affine permutations and parking functions, and we give a new description of parking functions
in terms of a bijection of Haglund \cite{haglund2008catalan}, which
turns out to be similar
to the description of the fixed points of regular nilpotent Hessenberg varieties
\cite{insko2013affine,precup2017palindromic}.
Section~\ref{sec:affine-schub-calc} recalls the
algebraic constructions of the affine Schubert polynomials and
nil Hecke algebras \cite{lam2014book}.
In Section~\ref{doubsec}, we state and prove { Theorems A and B}.
Finally, in Section~\ref{sec:geohess} we prove Theorem C and develop necessary geometric tools.

{\bf Acknowledgments} 
The authors would like to thank Thomas Lam, Mark Shimozono,
and S. J. Lee for interesting discussions about affine Schubert calculus.
{ The first author was partially supported
  by NSF DMS-180237.} The authors are grateful to anonymous referees who provided us
many suggestions and corrections. In particular, the Hessenberg paving of the affine Springer fibers (see Theorem C)
and the simplified combinatorial statement (see Corollary A) 
are results of the  multiple exchanges with our brilliant referees.
The second author was partially supported
by NSF CAREER grant DMS-1352398.

\section{Combinatorial notation and preliminaries}

\label{sec:affine-perm-park}

We recall certain combinatorial notations and preliminary statements which will be used in the proof of Theorems A and B.
This includes several different versions of the 
Shuffle Theorem
\cite{haglund2005combinatorial,CarlssonMellit12},
in which the combinatorial objects are described by three different versions of parking functions, 
namely labeled Dyck paths, 
restricted affine permutations, and a third one known as ``schedules'' \cite{haglund2003conjectured,hicks2005thesis,haglund2008catalan,gorsky2014rational}. We explain several known bijections between all three of these objects, in a way that is compatible with certain statistics, such as $\area$, $\dinv$, and reading word order.
We then describe a partition of each set
into groups {called \emph{cars} in parking function language}, 
labeled by usual (non-affine) 
permutations satisfying
a condition that is similar to  { one describing} the fixed points of
Hessenberg varieties 
\cite{insko2013affine,precup2017palindromic,abe2014equivariant}.
We sum up the relations in the diagram below:

\[\begin{tikzcd}[cramped]
	\lRes{\mu}         && \Res(n,n+1)        && \Res(\tau) \\
	\\
	 \lpfl{\mu}(n)              && {\pfl(n)}       && \cars(\tau) && \mathbb{Z}^{2} \\
	\\
	\rschedpfl{\mu}(n)  && { \schedpfl(n)} && \scheds(\tau)
	\arrow[hook, from=1-1, to=1-3]
	\arrow[from=1-1, to=3-1]
	\arrow[from=1-3,"{\resizebox{1.3cm}{0.9ex}{$\sim$}}"',sloped,"\rotatebox{90}{\(\cA_{n+1}\circ \mathrm{inv}\)}",to=3-3]
	\arrow[hook', from=1-5, to=1-3]
	\arrow[from=1-5,  to=3-5, "{\resizebox{1.3cm}{0.9ex}{$\sim$}}"',sloped,"\rotatebox{90}{\(\alpha\)}" ]
	\arrow[from=1-5, to=3-7, "{\ind(
          \tau)}\times
       \dinv"]
	\arrow[hook, from=3-1, to=3-3]
	\arrow[hook', from=3-5, to=3-3]
	\arrow[from=3-5, to=3-7,"{\area} \times \dinv"]
	\arrow[from=5-1, to=3-1]
	\arrow[hook, from=5-1, to=5-3]
	\arrow[ from=5-3, to=3-3, "{\resizebox{1.3cm}{0.9ex}{$\sim$}}",sloped,"\rotatebox{270}{\(\varphi\)}"']
	\arrow[ from=5-5, to=3-5, "{\resizebox{1.3cm}{0.9ex}{$\sim$}}",sloped,"\rotatebox{270}{\(\beta\)}"']
	\arrow[from=5-5, to=3-7,"{\maj(\tau)\times |\bullet|}"']
	\arrow[hook', from=5-5, to=5-3]
\end{tikzcd}\]

The sets in the diagram and statistics are defined below. The bottom arrows of the diagram were studied by Haglund  \cite{haglund2008catalan} { who} defined \(\sched(\tau)\subset \mathbb{Z}^n\). In particular, as explained in this section, 
{this}
provides a \((q,t)\)-version of the combinatorial formula
\begin{equation}\label{eq:simple-f}
  (n+1)^{n-1}=\sum_{\tau \in S_n}\prod_{i=1}^n\sched_i(\tau).
\end{equation}
The geometric part of this paper yields a geometric interpretation of the last formula as
partition of the affine Springer fiber into affine bundles over Hessenberg type varieties.

The last column of the diagram connects the shuffle version of the sets attached to a composition \(\mu\). In Theorem $1''$ from this
section we show how one recovers the \(q,t\)-characters of \(S_n\)-isotypical pieces of { \(DR_n\)} from the shuffle version of \eqref{eq:simple-f}.

Most of combinatorial construction from this section have an extension to a rational case: \((n,n+1)\) is replaced by
\((n,m)\) where \(m\) is coprime with \(n\). Whenever it is possible we provide the constructions in the more general context.
In last subsection we provide another realization of parking functions in terms of flags of lattices, this description
used in the later sections.

\subsection{Combinatorial notations}

\label{sec:combnoations}

The set of \emph{partitions} $\Par(n)$
is the collection of sequences 
$\lambda=(\lambda_1,...,\lambda_l)$ 
of positive integers 
{ summing to $n$}
which are 
non-strictly sorted in decreasing order. 
To each partition we have its \emph{Ferrers diagram}
or \emph{Young diagram}
\begin{equation}
\label{eq:defferrers}    
\ferrers(\lambda)=\left\{(r,s)\in \mathbb{Z}_{\geq 0}^2:
r<\lambda_{s+1}\right\},
\end{equation}
and we will sometimes simply write $(r,s)\in \lambda$.
This follows the conventions of \cite{Haiman02},
except that we will draw the Young diagrams in English notation,
which is reflected over the $x$-axis so that they are aligned with the coarea region of parking functions.
The \emph{transposed partition} 
$\lambda'$ is the one for which 
$(r,s)\in \lambda' \Leftrightarrow (s,r)\in \lambda$.

By a \emph{composition} of $n$, we will mean 
a finite list 
of positive integers 
$\mu=(\mu_1,...,\mu_l)$
such that
$|\mu|=\mu_1+\cdots+\mu_l=n$, but which is not necessarily sorted.
An \emph{ordered set partition} will
mean an ordered list of nonempty subsets 
$(B_1|\cdots |B_l)$, such that
\[B_1 \sqcup \cdots \sqcup B_l=[n],\quad [n]=\{1,...,n\}.\] 
Given a composition $\mu$,
we will denote the set of all ordered set partitions with $|B_i|=\mu_i$ by $\osp(\mu)$.

Given a composition $\mu=(\mu_1,...,\mu_l)$
let
\[S_\mu=S_{\mu_1}\times \cdots \times S_{\mu_l} \subset S_n\]
denote the Young subgroup of the symmetric group
$S_n$.
The minimal and maximal length elements of the left coset space $S_\mu\backslash S_n$ are known as $\mu$-shuffles and reverse
$\mu$-shuffles respectively. The sets of shuffles and reverse shuffles, denoted 
$\lshuffs{\mu}$ and $\rshuffs{\mu}$ respectively, 
consist of permutations
$\sigma\in S_n$ whose entries are sorted in the blocks of
$\mu$ in increasing (resp. decreasing) order.
For instance, for $\mu=(2,3)$ we would have
\[\rshuffs{\mu}=\left\{(2, 1, 5, 4, 3), 
(2, 5, 1, 4, 3), (2, 5, 4, 1, 3), (2, 5, 4, 3, 1),
    (5, 2, 1, 4, 3),\right.\]
    \[\left.(5, 2, 4, 1, 3), (5, 2, 4, 3, 1), (5, 4, 2, 1, 3),
(5, 4, 2, 3, 1), (5, 4, 3, 2, 1)\right\}\]
which consists of all permutations for which both
$\{1,2\}$ and $\{3,4,5\}$ appear in reverse order, i.e.
$\sigma^{-1}(1)>\sigma^{-1}(2)$,
and $\sigma^{-1}(3)>
\sigma^{-1}(4)>
\sigma^{-1}(5)$.  The elements of $
\lshuffs{\mu},\rshuffs{\mu}$ are each in bijection with $\osp(\mu)$. The element of
$\osp(\mu)$ consisting of the blocks 
$\{1,...,\mu_1\},\{\mu_1+1,...,\mu_1+\mu_2\},...$ will be denoted $\minosp(\mu)$.
We will some times denote the non-reversed shuffles by $\shuffs{\mu}=\lshuffs{\mu}$.

Given a permutation $\sigma \in S_n$, we define the \emph{inversion table} by
$\invt(\sigma)=(a_1,...,a_n)$
where
\begin{equation}
\label{eq:invtdef}    
a_{\sigma_j}=\#\left\{1\leq i\leq j-1:\sigma_i>\sigma_j\right\}.
\end{equation}
The
\emph{major index table} is given by
$\majt(\sigma)=(a_1,...,a_n)$,
where
\begin{equation}
\label{eq:majtdef}
 a_{\sigma_i}=\#\left\{i \leq j \leq n-1: \sigma_{j}>\sigma_{j+1}\right\}
\end{equation}
For instance, for
$\sigma=(2, 1, 3, 6, 5, 4)$
we would have
$\invt(\sigma)=(1, 0, 0, 2, 1, 0)$
and 
$\majt(\sigma)=(2, 3, 2, 0, 1, 2)$.
We define $\maj(\tau)=|\majt(\tau)|$, and also 
define the \emph{comajor index}
\[\comaj(\tau)=n(n-1)/2-\maj(\tau).\]

Compositions of the form $\ba=\invt(\sigma)$ and 
$\ba=\majt(\sigma)$ for some $\sigma$ are called 
\emph{staircase} and 
\emph{descent} compositions respectively. 
 { We can recover \(\sigma\) from \(\majt(\tau)=\ba\) as follows. We set \(\mu_{m-i+1}\) to be the number of occurrences
  of \(i\) in \(\ba\), here \(m=\mathrm{max}(\ba)\). Then \(\sigma\in \lshuffs{\mu} \) and the blocks corresponding to the part \(\mu_{m-i-1}\)
consist of indices \(j\) such that \(a_j=i\).}  
The staircase and descent compositions are the exponents in the Artin and Garsia-Stanton descent monomials defined below.

\subsection{Coinvariant algebras}

Given a number $n$, we will use bold letters to denote $n$-tuples. For instance, a set of variables will be denote by
$\bx=(x_1,...,x_n)$, while the exponents 
may be written $\bk=(k_1,...,k_n)\in \mathbb{Z}_{\geq 0}^n$
so that $\bx^\bk=x_1^{k_1}\cdots x_n^{k_n}$.
If $\sigma\in S_n$ is a permutation, we will denote the result of permuting the indices by $\bx_{\sigma}=(x_{\sigma_1},...,x_{\sigma_n})$.

\begin{defn}
The \emph{coinvariant} algebra in $n$ variables is defined by
\begin{equation}
\label{eq:coinvdef}    
R_n=R_n(\bx)=\C[x_1,...,x_n]/\coinvid(\bx)
\end{equation}
where $\coinvid(\bx)=(e_1(\bx),...,e_n(\bx))$ is the ideal generated by the 
(elementary) symmetric polynomials with vanishing constant term.
\end{defn}

There are two well-known monomial bases of $R_n$ indexed by permutations, 
called the \emph{Artin basis} $\{f_\sigma(\bx)\}$, and \emph{Garsia-Stanton descent basis} $\{g_\sigma(\bx)\}$, where 
\begin{equation}
    \label{eq:artinbasis}
    f_\sigma(\bx)=\bx^{\invt(\sigma)},\quad g_\sigma(\bx)=\bx^{\majt(\sigma)}.
\end{equation}
Written another way, we have
\begin{equation}
\label{eq:descmon}    
g_\sigma(\bx)=
\prod_{i:\sigma_{i}>\sigma_{i+1}}
x_{\sigma_1}\cdots x_{\sigma_i},
\end{equation}
whereas the Artin basis can be described as the sub-staircase monomials
\begin{equation}
\label{eq:substaircase}    
\{f_\sigma(\bx)\}=
\left\{\bx^{\ba}: a_i\leq n-i\right\}.
\end{equation}
Note that this different from other notations, which often use
$a_i\leq i-1$.

The Artin monomials are in fact the \emph{standard monomials} of $\coinvid(\bx)$ with respect to the lexicographic order on $\ba$. The descent monomials are also standard monomials, but for a different order, called the \emph{descent order}:
 \begin{defn}
    \label{desdef}
  The descent order on compositions is defined by $\ba\leq_{des} \bb$ if
    \begin{enumerate}
    \item $\sort_{>}(\ba) <_{lex} \sort_{>}(\bb)$ or
    \item $\sort_{>}(\ba)=\sort_{>}(\bb)$ and $\ba \leq_{lex} \bb$ 
    \end{enumerate}
      where $\sort(\ba)$ sorts a composition in decreasing order to produce
      a partition.
    \end{defn}
    For instance, for $n=2$, we would have
\[(0,0)<(0,1)<(1,0)<(1,1)<(0,2)<(2,0)<\cdots\]
noting that it is possible to have $\ba <_{des} \bb$, but $|\ba|>\bb|$.
The descent order does not satisfy the multiplicativity property required of monomial orders 
in the sense of Gr\"{o}bner bases 
\cite{cox2008ideals}. However,
the following proposition shows that the descent monomials are still standard monomials, and in fact gives
an algorithm for their reduction:
\begin{prop} (Allen \cite{allen1994descent})
  \label{prop:allen}
  For any composition $\ba$, that is not a descent partition (i.e.
  there is no \(\sigma\in S_n\) such that
  \(\ba=\majt(\sigma)\)) there exists a non-empty partition $\mu$ and a descent composition
  $\bc=\majt(\tau)$, \(\tau\in S_n\) such that
  \[\by^{\bc} m_\mu(\by)=\by^\ba+\sum_{\bb<_{des} \ba} a_{\bb} \by^{\bb},\]
  where $m_\mu(\by)$ is the monomial symmetric function.
\end{prop}

We now define the two-variable version of coinvariant algebras:
\begin{defn}
The double (or diagonal) 
coinvariant algebra is defined as
\begin{equation}
    \label{eq:diagcoinv}
    DR_n=\C[\bx,\by]/
    \coinvid(\bx,\by),\,\,\, 
    \coinvid(\bx,\by)=
\lang \sum_{k} x_k^i y_k^j:
  (i,j)\neq (0,0)\rang.
\end{equation}    
\end{defn}
Since $\coinvid(\bx,\by)$ is homogeneous with respect to the bigrading
$\mdeg(x_i)=(1,0)$ and $\mdeg(y_i)=(0,1)$, we may take its graded dimension which is a polynomial in two variables
\[\grdim_{q,t} DR_n =\sum_{i,j} 
\dim(DR_n^{(i,j)})
q^it^j.\]
Here $DR_n^{(i,j)}$ is the homogeneous component of $DR_n$ with bigrading $(i,j)$,
so that $q,t$ correspond to the gradings in the $x$ and $y$-variables, respectively.

Since $\coinvid(\bx,\by)$ is preserved by the diagonal action of the symmetric group
\[(\sigma\cdot f)(x_1,...,x_n,y_1,...,y_n)=
f(x_{\sigma_1},...,x_{\sigma_n},y_{\sigma_1},...,y_{\sigma_n}), \]
we have an action of $S_n$ on $DR_n$, and in fact on each homogeneous component $DR_n^{(i,j)}$. 
In \cite{Haiman02}, Haiman proved that the character is given explicitly by the formula
\begin{equation}
\label{eq:drntonabla}
\grdim_{q,t} DR_n^{S_\mu}=
\nabla e_n\big|_{m_\mu}
\end{equation}
where $DR_n^{S_\mu}$ is the space of invariants under the Young subgroup, the vertical bar means we are extracting the coefficient of $m_\mu$,
and $\nabla$ is the famous nabla operator of \cite{garsia1996remarkable}, which acts diagonally in the modified Macdonald basis.

On the other hand, the  \emph{Shuffle Theorem}
\cite{haglund2005combinatorial,haglund2012compositional,CarlssonMellit12} provides a combinatorial formula for the right side of 
\eqref{eq:drntonabla} in terms of  parking function statistics,
\(\area\) and \(\dinv\),
these statistics are defined in Definition~\ref{def:dinv} in Section~\ref{sec:ratpark}.
 There is also an
equivalent \emph{super} version, which is an expression for
$\omega \nabla e_n$, where $\omega$ is the Weyl involution.
Combining the Shuffle Theorem with \cite{Haiman02}
implies a combinatorial formula for the Frobenius character of $DR_n$ expanded in the monomial basis,
whereas the super version determines the 
corresponding expansion of 
the sign twist $DR_n \otimes \signrep$.
%
%

\subsection{Rational parking functions}

\label{sec:ratpark}

We recall the combinatorial objects that appear in the Shuffle
Theorem and some of its variants
\cite{haglund2005combinatorial,haglund2012compositional,CarlssonMellit12}. 
For a reference, see \cite{haglund2008catalan}.

An $(n,m)$-Dyck path $\dyckpath$
is a path in $\mathbb{Z}^2$ consisting of North and East steps from $(0,0)$ to $(m,n)$, which stays entirely above the line $y=(n/m)x$. The area sequence $(a_1,...,a_n)=\areat(\pi)$ is the integer vector with the property that $a_i$ is the length of the $i$-th row between the path and the diagonal, starting from the bottom. The 
co-area sequence $\coareat(\pi)$ 
is determined by
$\areat(\pi)_i+\coareat(\pi)_i=\lfloor (i-1)m/n \rfloor$, the sum being equal to the area sequence of a maximal $(n,m)$-path.

\begin{figure}
\begin{tikzpicture}[scale=0.750000]
\draw[help lines] (0,0) grid (7,4);
\draw[dotted] (0,0)--(7,4);
\draw[black,semithick] (0,0)--(0,4)--(7,4)--(7,0)--(0,0);
\draw[black,semithick] (0,0)--(0,1)--(0,2)--(1,2)--(2,2)--(2,3)--(3,3)--(4,3)--(4,4)--(5,4)--(6,4)--(7,4);
\node[scale=1.500000] (2) at (0.500000,0.500000) {2};
\node[scale=1.500000] (4) at (0.500000,1.500000) {4};
\node[scale=1.500000] (1) at (2.500000,2.500000) {1};
\node[scale=1.500000] (3) at (4.500000,3.500000) {3};
\end{tikzpicture}
\caption{A rational parking function 
$P=(\pi,\sigma)\in \pfl(4,7)$.
Then we have $\areat(\pi)=(0,1,1,1)$,
$\coareat(\pi)=(0,0,2,4)$.}
    \label{fig:pf47}
\end{figure}

An $(n,m)$-parking function $P=(\dyckpath,\sig)$ 
consists of the pair of
an $(n,m)$-Dyck path $\pi$ together with the labeling
of the rows by a permutation $\sigma \in S_n$,
that are decreasing along each vertical wall.
The set of $(n,m)$-parking functions is denoted by
$\pfl(n,m)$.
An example is shown in Figure \ref{fig:pf47}.

We will write $\pfl(n)=\pfl(n,n)$ in the special case of $m=n$.
In this case, the usual $\dinv$ statistic is given by
\begin{defn}\label{def:dinv}
    Let $P\in \pfl(n)$, and let $\ba=\areat(P)$. Then $\dinv_i(P)$ is equal to the number of pairs $(i,j)$ with $1\leq i<j\leq n$, which satisfy
    \begin{equation}
\label{eq:dinvdef}
a_i=a_j \mbox{ and } \sigma_i<\sigma_j,\mbox{ or }
a_i=a_j+1 \mbox{ and } \sigma_i>\sigma_j.
\end{equation}
We define \(\dinv(P)\) as a sum of the entries of the vector
\[\mathbf{dinv}(P)=(\dinv_1(P),\dots,\dinv_n(P)).\]
\end{defn}
See Figure \ref{fig:bigpf} for an example.
In the square case, the integer vector 
$\areat(P)_{\sigma^{-1}}$
is always a descent composition, 
which will be denoted $\descpf(P)$.

\begin{figure}
    \centering
\begin{tikzpicture}[scale=0.750000]
\draw[help lines] (0,0) grid (7,7);
\draw[dotted] (0,0)--(7,7);
\draw[black,semithick] (0,0)--(0,7)--(7,7)--(7,0)--(0,0);
\draw[black,semithick] (0,0)--(0,1)--(0,2)--(1,2)--(1,3)--(1,4)--(2,4)--(2,5)--(3,5)--(4,5)--(4,6)--(5,6)--(6,6)--(6,7)--(7,7);
\node[scale=1.500000] (6) at (0.500000,0.500000) {6};
\node[scale=1.500000] (7) at (0.500000,1.500000) {7};
\node[scale=1.500000] (2) at (1.500000,2.500000) {2};
\node[scale=1.500000] (3) at (1.500000,3.500000) {3};
\node[scale=1.500000] (5) at (2.500000,4.500000) {5};
\node[scale=1.500000] (1) at (4.500000,5.500000) {1};
\node[scale=1.500000] (4) at (6.500000,6.500000) {4};
\end{tikzpicture}
    \caption{A parking function $P=(\pi,\sigma)\in \pfl(7)$ with $\sigma=(6,7,2,3,5,1,4)$, 
    $\dinv(P)=4$, $\areat(P)=(0, 1, 1, 2, 2, 1, 0)$,
$\coareat(P)=(0, 0, 1, 1, 2, 4, 6)$,
$\readword(P)=(5, 3, 1, 2, 7, 4, 6)$, and 
$\descpf(P)=(1, 1, 2, 0, 2, 0, 1)$, the descent composition $\ba$ whose $i$-th entry $a_i$ is the area in the row containing $\sigma_i$.}
    \label{fig:bigpf}
\end{figure}

If $P\in \pfl(n)$, its \emph{reading word}
$\readword(P)\in S_n$ is the result of reading off the entries in $P$ from upper-right to lower-left, in decreasing order of area.
If $\mu$ is a composition of $n$, we will denote
\begin{equation}
    \label{eq:pflmudef}
    \lpfl{\mu}(n)=\left\{P\in \pfl(n): \readword(P) \in \lshuffs{\mu}\right\},
\end{equation} 
and similarly for $\rpfl{\mu}(n)$ and $\rshuffs{\mu}$.
Then combined with the (highly nontrivial) formula
\eqref{eq:drntonabla},
the signed version of
the (non-compositional) Shuffle Theorem is equivalent to the following.
\begin{shuffthm0}[Restated Shuffle Theorem \cite{haglund2005combinatorial,CarlssonMellit12}+\cite{Haiman02}]
\label{thm:shuffpf}
We have
\begin{equation}
       \label{eq:shuffthmpf}
\grdim_{q,t} ( DR_n  \otimes \signrep)^{S_\mu} =
\sum_{(\pi,\sigma)\in \pfl^{>}_\mu(n)}
t^{\area(\pi)} q^{\dinv(\pi,\sigma)}
\end{equation}
where the left hand side is the bigraded dimension of the $S_\mu$-invariants of the twist of $DR_n$ by the sign representation.
\end{shuffthm0}

Let us explain why we called statement above Restated Shuffle Theorem.
There are two separate statements in the literature.
  Statement 1 claims that the  bigraded Frobenius character of \(DR_n\) is
 equal to $\nabla e_n$, where $\nabla$ is a certain operator
  \cite{garsia1996remarkable} on symmetric functions. This was famously proved by Haiman
in \cite{Haiman02}, see formula~\eqref{eq:drntonabla} from above.

  Statement 2 is a combinatorial identity expressing $\nabla e_n$ as a sum over parking functions. Traditionally,
  Statement 2  is called Shuffle Conjecture in celebrated paper of
 Haglund, Haiman, Loehr, Remmel
and Ulyanov
  \cite{HaglundHaimanLoehrRemmelUlyanov05}, and it was first proved in \cite{CarlssonMellit12}.
Our Theorem~\ref{thm:shuffpf}, which we called restated Shuffle conjecture,
implies  the combinatorial formula for the bigraded Frobenius character of \(DR_n\). Thus, combined with Statement 1, this theorem implies
  Statement 2 and vice versa, but strictly speaking it is logically independent from them.

The version of  Theorem~\ref{thm:shuffpf} without the sign twist is given by replacing 
$\rpfl{\mu}(n)$ with $\lpfl{\mu}(n)$.
In terms of symmetric functions,
the polynomial on either side of \eqref{eq:shuffthmpf} is the coefficient of the monomial symmetric function $m_\mu$ in
$\omega \nabla(e_n)$, where $\nabla$ is the nabla operator \cite{garsia1996remarkable}, 
and $\omega$ is the Weyl involution.

Let us also point out that in the case \(\mu=(n)\) the combinatorial
side of the formula \eqref{eq:shuffthmpf} is the sum over the
Dyck paths and this sum is known as \(q,t\)-Catalan number.
We refer to the beautiful book by Haglund \cite{haglund2008catalan}
where many combinatorial aspects of the theory of Catalan numbers are
discussed.

We also explain some special properties of the Catalan case in Example~\ref{ex:schedpfl1} of this section. We also explain how
our monomial basis yields a set on minimal generators
of the ideal generated by the alternating polynomials in
corollary~\ref{cor:min-gen}.

\subsection{Schedules}

\label{sec:sched}

We now describe the ``schedules'' version of Theorem \ref{thm:shuffpf} \cite{hicks2005thesis,haglund2003conjectured}.
For any $\tau\in S_n$, we define the \emph{runs}, 
denoted $\runs(\tau)=(\run_1(\tau),...,\run_k(\tau))$
as the maximal consecutive increasing subsequences of
$\tau$. By convention, if there are $k$ runs, we define $\run_{k+1}(\tau)$ to consist of a 
single run containing only the number zero, 
thinking of $\tau_{n+1}=0$.
For instance, for
$\tau=(3,5,1,2,7,4,6)$ we would have
$k=3$ and
\[\run_1(\tau)=(3,5),\ \ 
\run_2(\tau)=(1,2,7), \ \ 
\run_3(\tau)=(4,6),\ \ 
\run_4(\tau)=(0).\]

If $\tau_i$ is in the $j$-th run of $\tau$, then we define $\sched_i(\tau)$ to be the number of elements of the $\run_j(\tau)$ that are greater than $\tau_i$, together with the number of elements of $\run_{j+1}(\tau)$ which are less than $\tau_i$.
Then \emph{schedule} of $\tau$ is the sequence 
$\sched(\tau)=(\sched_1(\tau),...,\sched_n(\tau))$.
For instance, for the above choice of $\tau$ we would have
$\sched(\tau)=(3,2,2,1,2,2,1)$. See the discussion preceding Theorem 5.3 of \cite{haglund2008catalan}.

\begin{defn}
\label{def:schedules}    
For any $\tau$, we define
\begin{equation}
    \label{eq:scheds}
    \scheds(\tau)=\left\{(k_1,...,k_n): 0\leq k_i \leq \sched_i(\tau)-1\right\}
\end{equation}
We then define the schedules version of parking functions
\begin{equation}
\label{eq:schedpfl}
    \schedpfl(n)=
\left\{(\majt(\tau),\bk): \bk_{\tau} \in \scheds(\tau)\right\}.
\end{equation}
noting that the indices of $\bk$ are permuted by $\tau^{-1}$.
\end{defn}
Permuting the indices $\bk$ 
allows us to correctly encode the shuffles.
If $\mu$ is a composition of $n$, 
we will also let
$\lschedpfl{\mu}(n)$ denote the set of those pairs
$(\ba,\bk) \in \schedpfl(n)$ with the property that
whenever $i$ and $i+1$ are in the same block of $\minosp(\mu)$, we have
\begin{equation}
    a_i\geq a_{i+1}, \ \ 
    a_i=a_{i+1}\Rightarrow k_i \leq k_{i+1}.
\end{equation}
We define $\rschedpfl{\mu}(n)$ similarly, but with the conditions
\begin{equation}
\label{eq:scheddecreasing}
    a_i\leq a_{i+1}, \ \ 
    a_i=a_{i+1}\Rightarrow k_i > k_{i+1}.
\end{equation}
noticing the strict inequality in the second.

Then we have
\begin{prop}
\label{prop:schedpf}    
There is a combinatorial bijection 
$\schedpfl(n)\rightarrow \pfl(n)$
whose restriction identifies $\lschedpfl{\mu}(n)$ with $\lpfl{\mu}(n)$ for any $\mu$, and similarly in the reverse direction.
Moreover, if $(\ba,\bk)$ corresponds to
$P=(\pi,\sigma)$, then we have
\begin{equation}
\label{eq:schedstats}    
\ba=\areat(P)_{\sigma^{-1}},\ \ 
\mathbf{dinv}(P)=\bk.
\end{equation}
\end{prop}
\begin{proof}
The bijection is 
given by sending $(\majt(\tau),\bk_\tau)$ to
$\varphi(\bk)$, where $\varphi$ is
the bijection described in the proof of Theorem 5.3 of 
\cite{haglund2008catalan}. We will not define 
this map in detail since we give an equivalent version in Section \ref{sec:hesscomb}, but see Example \ref{ex:cars} below.
\end{proof}

The set $\schedpfl(n)$ is partitioned into bins $\schedpfl(n)=\bigsqcup_\tau \schedpfl(\tau)$
according to the permutation $\tau$
whose major index is $\ba$. By the leftmost equality in \eqref{eq:schedstats}, the parking functions associated to $\schedpfl_\tau$
under the bijection of Proposition \ref{prop:schedpf} are the ones for which
the labels in rows of area $l$ are the runs of $\tau$ in some order, which is denoted $\cars(\tau)$. An example is shown in Figure \ref{fig:cars}.
\begin{figure}
    \centering
    \begin{tikzpicture}[scale=0.5500000]
\draw[help lines] (0,0) grid (5,5);
\draw[dotted] (0,0)--(5,5);
\draw[black,semithick] (0,0)--(0,5)--(5,5)--(5,0)--(0,0);
\draw[black,semithick] (0,0)--(0,1)--(0,2)--(1,2)--(1,3)--(2,3)--(2,4)--(2,5)--(3,5)--(4,5)--(5,5);
\node[scale=1.000000] (4) at (0.500000,0.500000) {4};
\node[scale=1.000000] (5) at (0.500000,1.500000) {5};
\node[scale=1.000000] (2) at (1.500000,2.500000) {2};
\node[scale=1.000000] (1) at (2.500000,3.500000) {1};
\node[scale=1.000000] (3) at (2.500000,4.500000) {3};
\end{tikzpicture}
\begin{tikzpicture}[scale=0.5500000]
\draw[help lines] (0,0) grid (5,5);
\draw[dotted] (0,0)--(5,5);
\draw[black,semithick] (0,0)--(0,5)--(5,5)--(5,0)--(0,0);
\draw[black,semithick] (0,0)--(0,1)--(0,2)--(1,2)--(1,3)--(1,4)--(2,4)--(3,4)--(3,5)--(4,5)--(5,5);
\node[scale=1.000000] (4) at (0.500000,0.500000) {4};
\node[scale=1.000000] (5) at (0.500000,1.500000) {5};
\node[scale=1.000000] (2) at (1.500000,2.500000) {2};
\node[scale=1.000000] (3) at (1.500000,3.500000) {3};
\node[scale=1.000000] (1) at (3.500000,4.500000) {1};
\end{tikzpicture}
\begin{tikzpicture}[scale=0.5500000]
\draw[help lines] (0,0) grid (5,5);
\draw[dotted] (0,0)--(5,5);
\draw[black,semithick] (0,0)--(0,5)--(5,5)--(5,0)--(0,0);
\draw[black,semithick] (0,0)--(0,1)--(0,2)--(1,2)--(1,3)--(2,3)--(2,4)--(2,5)--(3,5)--(4,5)--(5,5);
\node[scale=1.000000] (4) at (0.500000,0.500000) {4};
\node[scale=1.000000] (5) at (0.500000,1.500000) {5};
\node[scale=1.000000] (1) at (1.500000,2.500000) {1};
\node[scale=1.000000] (2) at (2.500000,3.500000) {2};
\node[scale=1.000000] (3) at (2.500000,4.500000) {3};
\end{tikzpicture}
\begin{tikzpicture}[scale=0.550000]
\draw[help lines] (0,0) grid (5,5);
\draw[dotted] (0,0)--(5,5);
\draw[black,semithick] (0,0)--(0,5)--(5,5)--(5,0)--(0,0);
\draw[black,semithick] (0,0)--(0,1)--(0,2)--(1,2)--(1,3)--(1,4)--(2,4)--(3,4)--(3,5)--(4,5)--(5,5);
\node[scale=1.000000] (4) at (0.500000,0.500000) {4};
\node[scale=1.000000] (5) at (0.500000,1.500000) {5};
\node[scale=1.000000] (1) at (1.500000,2.500000) {1};
\node[scale=1.000000] (3) at (1.500000,3.500000) {3};
\node[scale=1.000000] (2) at (3.500000,4.500000) {2};
\end{tikzpicture}
    \caption{The elements of $\cars(\tau)$ for $\tau=(3,1,2,5,4)$, as in Figure 4 of \cite{haglund2008catalan}.}
    \label{fig:cars}
\end{figure}

We now have the schedules version of Theorem \ref{thm:shuffpf}:
\begin{shuffthm1}
\label{thm:schedshuff}
Let $\mu$ be a composition of $n$. Then we
have
\begin{equation}
\label{eq:schedshuff}    
\grdim_{q,t} ( DR_n  \otimes \signrep)^{S_\mu}=
\sum_{(\ba,\bk)\in
\rschedpfl{\mu}(n)} t^{|\ba|}q^{|\bk|}
\end{equation}
\end{shuffthm1}

When $\mu=(1^n)$, the right hand side is equal to
\begin{equation}
  \label{eq:hagloehr}
\sum_{\tau}
  t^{\maj(\tau)}\prod_{i=1}^n [\sched_i(\tau)]_q,
\end{equation}
where $[k]_q$ is the $q$-number.

\begin{ex}
\label{ex:schedpfl1}
For $n=3$, the elements of $\schedpfl(n)$ are given by 
\[(000,000),(000,010),(000,100),(000,110),\]
\[(000,200),(000,210),(101,000),(010,000),\]
\[(010,100),(011,000),(011,010),(001,000),\]
\[(001,100),(001,001),(001,101),(012,000)\]
Taking the sum as in the right side of
\eqref{eq:schedshuff} gives
\[{q}^{3}+{q}^{2}t+q{t}^{2}+{t}^{3}+2\,{q}^{2}+3\,qt+2\,{t}^{2}+2\,q+2\,t
+1,\]
which is the Hilbert series of $DR_3$. 
\end{ex}
\begin{ex}
For the partitions
$\mu=\{(1,1,1),(2,1),(3)\}$,
the sizes of the sorted schedules
$|\rschedpfl{\mu}(3)|$ 
are given by
$16,10,5$ respectively, where taking $\mu=(1,1,1)$ results in no extra condition. Generally, 
$\schedpfl((n))$ is the number of Dyck paths of size $n$ and the graded sum is the $q,t$-Catalan number.
\end{ex}

The following proposition will be used to deduce
Theorem 1$'$ from our monomial basis.
\begin{prop}
 \label{prop:schedswitch}   
 Suppose that $(\ba,\bk)\in \schedpfl(n)$, and either $a_i>a_{i+1}$ or $a_i=a_{i+1}$ and $k_{i}<k_{i+1}$ for $1\leq i \leq n-1$. Then we have that $(\ba_{s_{i}},\bk_{s_i})\in \schedpfl(n)$, where $s_i=t_{i,i+1}$ is the simple transposition.
\end{prop}
\begin{proof}
  We have \(\sigma\in S_n\) such that $\ba=\majt(\sigma)$.
In the first case   $a_i>a_{i+1}$, hence let $\ba',\bk'$ be the result of switching the labels in positions $i,i+1$
in $\ba,\bk$ respectively. Then it is not hard to see that
$\ba'=\majt(s_i \sigma)$, where $s_{i}$ is the simple transposition, so that $\ba'$ is a descend vector.
It can then be checked that
${\sched_{\sigma_j}(s_i{\sigma})\geq
\sched_{\sigma_j}(\sigma)}$ for all $j$, so that
$(\ba',\bk')\in \schedpfl(n)$.

The second case follows from the statement that if $a_i=a_{i+1}$, then we have
${\sched_i(\sigma)=\sched_{i+1}(\sigma)+1}$, so that
$(\ba,\bk')\in \schedpfl(n)$, where $\bk'$ is as above.
\end{proof}

\subsection{Restricted permutations}\label{sec:affine-permutations}
Let $\affperms$ denote the affine permutations, i.e. those
bijections $w:\mathbb{Z}\rightarrow \mathbb{Z}$
satisfying
\[w_i=w_{i-n}+n,\quad w_1+\cdots +w_n= n(n+1)/2.\]
If the second condition is dropped, then $w$ is called
an extended affine permutation, the set of which will be denoted $\extperms$.
Any (extended) affine permutation is determined by its window notation
$w=(w_1,...,w_n)$, since $w$ is determined by its values
on $\{1,...,n\}$.

for any relatively prime $i,j$ which are not congruent modulo $n$, we have a transposition 
$t_{i,j} \in W$ which interchanges the two, noting that $i,j$ are not uniquely determined by $t_{i,j}$. 
For instance, for $n=6$ we would have that
\[t_{3,-7}=(1,2,-7,4,15,6).\]
We have the simple transpositions $s_i=t_{i,i+1}$ for any integer $i$,
which is used to define the affine Bruhat order denoted $\leq_{bru}$ on both 
$\affperms$ and $\extperms$ \cite{Bjrner1996AffinePO}.

 We will make use 
of two extended permutations,
the rotation and translation elements,
given by
\begin{equation}
\label{eq:rotshift}
\rotelt_i=i+1,\quad (\shiftelt_j)_i=i+n\delta_{\langle i\rangle_n,\langle j\rangle_n}
\end{equation}
where $\langle i\rangle_n =\overline{(i-1)}+1$ is the unique element of $\{1,...,n\}$ which is congruent to $i$ modulo $n$.
We define $\min(w)=\min(w_1,...,w_n)$, which is the same as the minimum value of $w$ over all positive numbers $i>0$.
We define the \emph{index} $\indres(w)=1-\min(w)$ to be the smallest number with the property that $w^+=\rho^{\indres(w)} w$ satisfies $w_i^+>0$ whenever $i>0$.

Let $(n,m)$ be relatively prime.
The set of $m$-stable permutations is the subset
\begin{equation}
\label{eq:stable}
\Stab(n,m) = \left\{w \in \tSn : w_{i+m}> w_i \mbox{ for all $i$}\right\}
\end{equation}
The set of $m$-restricted permutations $\Res(n,m)$ is
the subset of affine permutations
whose inverse is $m$-stable. This set is finite and was shown
to have size $m^{n-1}$, and to parametrize the torus fixed points
of the $(n,m)$-affine Springer fiber
\cite{gorsky2014rational,hikita2014affine}.
Intersecting the Schubert varieties with the Springer fiber determines an affine paving
\cite{LusztigSmelt91}, and the dimension of the cell centered at $w\in \Res(n,m)$ is
\begin{equation}
\label{eq:dimcell}    
\dim_m(w)=
\#\left\{(i,j):1\leq i<j \leq n,\ 0 < w^{-1}_i- w^{-1}_j < m\right\}
\end{equation}
We also have the codimension
$\codim_m(w)=
(n-1)(m-1)/2-\dim_m(w)$.

Similarly to \(\lshuffs{\mu},\rshuffs{\mu}\) we
define $\lRes{\mu}(n,m)$ and $\rRes{\mu}(n,m)$ to be 
the set of those restricted permutations 
$w\in \Res(n,m)$ with the property that the elements of $(w_1,...,w_n)$
are in increasing or respectively decreasing 
order along the components of $\minosp(\mu)$.
In other words, they are representatives of 
the right coset $w S_{\mu}$
which are minimal resp. maximal in the Bruhat order.

Following \cite{gorsky2014rational}, 
we have a bijection $\cA_{m}:\Stab(n,m)\rightarrow \pfl(n,m)$,
defined as follows:
for each $j$, there are unique integers $r_j,k_j$ which satisfy
\[w^{-1}_j-\min(w)= r_jm-k_jn,\ \ 
r_j\in \{0,...,n-1\},\] 
which necessarily implies $k_j \geq 0$. Then
$\cA_m(w)$ is defined as the unique parking function
$P=(\pi,\sigma)$ for which $\coareat(P)=\ba_\sigma$, where $\ba$ is
defined by $a_{j}=k_j$.
For instance, the restricted permutation $w=(4,-2,3,5)\in \Res(4,7)$ has the property that $\cA_7(w)$ is the parking function in Figure \ref{fig:pf47}.

The following map connects these objects to the Shuffle Theorem:
\begin{defn}
\label{def:extcomb}
Define $\respf:\Res(n,n+1)\rightarrow \pfl(n)$ by setting $\respf(w)$ to be the image of $\cA_{n+1}(w^{-1})$ under the bijection $\pfl(n,n+1)\rightarrow \pfl(n)$ which removes the final East step.
\end{defn}

\begin{defn}
\label{def:ind}    
Given an affine permutation $w$, we define its \emph{index} by
\begin{equation}
  \label{aa1def}
  \indt(w)= \ba,\ \ 
  a_i= (w^+_i-\langle w^+_i\rangle_n)/n,\ \ 
  w^+=\rotelt^{\indres(w)}w.
  \end{equation}
\end{defn}

Then we have the following proposition, which is straightforward to check.
\begin{prop}
\label{prop:extstats}    
We have that $\majt(\respf(w))=\indt(w)$, and
also $\dinv(\respf(w))=\codim_{n+1}(w)$.
Additionally,  \(\alpha\)
carries $\lRes{\mu}(n,n+1)$
into parking functions whose reading word is a reverse
$\mu$-shuffle, and similarly for the opposite orders. 
\end{prop}
\begin{proof}
{
The statement about the $\dinv$ statistic can be found in \cite{gorsky2014rational}. For the others, we apply the following characterization of the inverse bijection
$\alpha^{-1}$:
Suppose that $w\in \Res(n,n+1)$
and that $P=\alpha(w)$ has coarea
sequence $\ba$ and row labels 
$\sigma$. Then 
$w=\rho^{k} w'$ where
$w'_{\sig_i}=(i-1)(n+1)-n a_i$
and $k$ is the unique shift which forces
$w$ to be an affine permutation. 
The other statements follow easily from this description.}
\end{proof}

We then obtain a third 
version of Theorem \ref{thm:shuffpf}:
\begin{shuffthm2}
We have
\begin{equation}
    \label{eq:shuffres}
\grdim_{q,t} ( DR_n  \otimes \signrep)^{S_\mu}=
\sum_{w\in \lRes{\mu}(n,n+1)}
t^{\indres(w)}
q^{\codim_{n+1}(w)}
\end{equation}
\end{shuffthm2}

Now assume that $m=n+1$ and let us describe the partitioning of 
$\pfl(n)$ into $\cars(\tau)$ in terms of 
restricted permutations. Let
\begin{equation}
    \label{eq:restau}
    \Res(\tau)=\left\{w\in \Res(n,n+1):
    \indt(w)=\majt(\tau)\right\}
\end{equation}
Then the bijections from the previous sections restrict to give
\begin{equation}
\label{eq:bijrescarssched}
\Res(\tau)\xrightarrow{\respf}
\cars(\tau)
\xleftarrow{\ressched} \scheds(\tau)
\end{equation}

In the case of the corresponding
Grassmannian restricted affine permutations, 
we find that $\Res_{(n)}^{<}(\tau)=\Res^<_{(n)}(n,n+1) \cap \Res(\tau)$ 
is nonempty if and only if $\majt(\tau)$ is sorted in increasing
order, which happens when $\tau$ is the 
unique permutation satisfying
$\tau=(B_k,...,B_1)$ where $(B_1,...,B_k)=\Pi(\mu)$ is the minimal ordered set composition associated to some composition $\mu$
as in Section \ref{sec:combnoations}.
In this case, $\majt(\tau)$ is the level set composition of $\mu$
written in reverse order, which consists of $\mu_k$ zeroes, followed by
$\mu_{k-1}$ ones, ending with $\mu_1$ entries equal to $(k-1)$.
We will denote this set by $\Res^<(\mu)$.

\subsection{Lattice description of parking functions}\label{sec:latts}

We have another map from $\Res(n,n+1)$ to parking functions,
which will be used in the geometric discussion
in Section \ref{sec:geohess}.

Let $\Gamma=\Gamma_{n,m}\subset \mathbb{Z}_{\geq 0}$ be the semigroup generated by relatively prime numbers \(n\),\(m\). Let $\latl(n,m)$ denote the set of all ideals 
\begin{equation}
\label{eq:latdp}
\latl^d(n,m)=\left\{E\subset \Gamma_{n,m}:
\ZZ_{\ge  \conduc} \subset E,\ 
|\Gamma_{n,m}-E|=d\right\}
\end{equation}
where 
$\conduc=(n-1)(m-1)$ is the \emph{conductor}.
Then $\latl(n,m)$ is in bijection with the 
set of $(n,m)$-rational Dyck paths
\cite{gorsky2011compactified}. 
We have a family of maps 
$\latcomb_i:\Res(n,m)\rightarrow 
\latl(n,m) =\bigcup_{d} \latl^d(n,m)$ given by
\begin{equation}
\label{eq:latmap}
\latcomb_i(w)=\mathbb{Z}-\left\{\conduc-w^+_j:j>i\right\}
\end{equation}
The initial map $\latcomb_0$ determines
a bijection $\Res^<_{(n)}(n,m)\leftrightarrow \latl(n,m)$.

For the flag version, we define
$\flatl(n,m)$ to be the collection of flags
$\latflag=(E_0 \supset E_{1} \supset \cdots )$
of $\Gamma_{n,m}$-submodules of $\mathbb{Z}_{\geq 0}$
satisfying
\begin{equation}
\label{eq:flatl}
E^0 \in \latl^d(n,m),\ \  
|E_{i}-E_{i+1}|=1,\ \ 
E_{i+n}=\eps^n E_i
\end{equation}
where $\eps$ is the translation operator
$k\mapsto k+1$. We have a bijection
\[\latcomb :\Res(n,m)\rightarrow \flatl(n,m)\]
given by $\latcomb(w)=\latflag$ where 
$E_i=\latcomb_{-i}(w)$.
\begin{ex}
\label{eq:latpf}    
If $w=(4,-2,3,5) \in \Res(4,7)$ is the restricted permutation
corresponding to the parking function in Figure \ref{fig:pf47}, then $\latcomb(w)$ is given by
\[(\eps^{14},\eps^{15},\eps^{16},\eps^{21}) \supset
(\eps^{15},\eps^{16},\eps^{18},\eps^{21})\supset 
(\eps^{15},\eps^{18},\eps^{20},\eps^{21}) \]
\[\supset (\eps^{15},\eps^{18},\eps^{20},\eps^{25}) \supset
(\eps^{18},\eps^{19},\eps^{20},\eps^{25})\supset \cdots\]
where $\eps^{k}$ is the generator resulting from applying $\eps^k$ to $0$, and the above lattices are written in terms of their generators over $\eps^n$. Then $E_0$ determines a Dyck path, whose inner corners are the generators, as shown in the following picture:

\begin{center}
\begin{tikzpicture}[scale=0.7500000]
\draw[dotted] (0,0)--(7,4);
\draw[gray,semithick] (0,0)--(0,4)--(7,4)--(7,0)--(0,0);
\draw[black,semithick] (0,0)--(0,1)--(0,2)--(1,2)--(2,2)--(2,3)--(3,3)--(4,3)--(4,4)--(5,4)--(6,4)--(7,4);
\node[scale=1.000000] (21) at (0.500000,0.500000) {21};
\node[scale=1.000000] (25) at (1.500000,0.500000) {25};
\node[scale=1.000000] (29) at (2.500000,0.500000) {29};
\node[scale=1.000000] (33) at (3.500000,0.500000) {33};
\node[scale=1.000000] (37) at (4.500000,0.500000) {37};
\node[scale=1.000000] (41) at (5.500000,0.500000) {41};
\node[scale=1.000000] (45) at (6.500000,0.500000) {45};
\node[scale=1.000000] (14) at (0.500000,1.500000) {14};
\node[scale=1.000000] (18) at (1.500000,1.500000) {18};
\node[scale=1.000000] (22) at (2.500000,1.500000) {22};
\node[scale=1.000000] (26) at (3.500000,1.500000) {26};
\node[scale=1.000000] (30) at (4.500000,1.500000) {30};
\node[scale=1.000000] (34) at (5.500000,1.500000) {34};
\node[scale=1.000000] (38) at (6.500000,1.500000) {38};
\node[scale=1.000000] (7) at (0.500000,2.500000) {7};
\node[scale=1.000000] (11) at (1.500000,2.500000) {11};
\node[scale=1.000000] (15) at (2.500000,2.500000) {15};
\node[scale=1.000000] (19) at (3.500000,2.500000) {19};
\node[scale=1.000000] (23) at (4.500000,2.500000) {23};
\node[scale=1.000000] (27) at (5.500000,2.500000) {27};
\node[scale=1.000000] (31) at (6.500000,2.500000) {31};
\node[scale=1.000000] (0) at (0.500000,3.500000) {0};
\node[scale=1.000000] (4) at (1.500000,3.500000) {4};
\node[scale=1.000000] (8) at (2.500000,3.500000) {8};
\node[scale=1.000000] (12) at (3.500000,3.500000) {12};
\node[scale=1.000000] (16) at (4.500000,3.500000) {16};
\node[scale=1.000000] (20) at (5.500000,3.500000) {20};
\node[scale=1.000000] (24) at (6.500000,3.500000) {24};
\end{tikzpicture}
\end{center}
The number in the box which is $i$ steps down and $j$ steps to the right from the upper left is given by $7i+4j$.
The rest of the parking function can be determined by labeling the generator that is removed with the numbers $\{1,2,3,4\}$ in decreasing order. 
\end{ex}

\section{Hessenberg  varieties and related combinatorics}
In this section we recall the basic definitions from the theory of the   Hessenberg varieties
\( \hessgeo({  S},h)\) in Section~\ref{sec:hesscohomology}. Most relevant case for us is when
\(S\) is equal to a Jordan block of maximal size. In this case we recall an affine paving for this variety
as well as some properties of the cohomology ring.

In Section~\ref{desdef} we establish an isomorphism between the set \(\Res(\tau)\) and some particular
subset \(\hesscomb(\tau)\) of the torus fixed points in \(\hessgeo(N,h_\tau)\), \(\tau\in S_n \). In the
subsequent section \ref{sec:geohess} we reveal that \(\hesscomb(\tau)\) is the torus fixed
locus of the some natural subvariety \(\hessgeo(\tau)\subset \hessgeo(N,h_\tau)\). The results of
Section~\ref{desdef} provide a natural basis for \(H^*(\hessgeo(\tau))\). The combinatorics of
this section is related to the sets in the bottom of the diagram
at the start of Section \ref{sec:affine-perm-park}.

\subsection{Cohomology of the regular nilpotent Hessenberg variety}

\label{sec:hesscohomology}

We recall some facts about the regular nilpotent Hessenberg variety 
\cite{demari1988eulerian,demari1992hessenberg,abe2014equivariant,tymoczko2007paving,abe2020survey}.

Let $B\subset GL_n(\C)$ be the Borel subgroup of 
upper triangular matrices, and let
\begin{equation}
\label{eq:defflag}    
\gFl_n=G/B=\left\{(V_1\subset \cdots \subset V_n) : V_i\subset\C^n,
\dim(V_i)=i\right\}
\end{equation}
be the usual complex flag variety. 
Let $\mathcal{L}_i=V_{i}/V_{i-1}$ denote the tautological line bundles for $1\leq i \leq n$.
Let $X^{\circ}_\sigma=B \sigma B$ be the Schubert cell, where we are identifying $\sigma$ with its permutation matrix
$(P_\sigma)_{i,j}=1$ if $i=\sigma_j$ or zero otherwise.

For instance, the Schubert cell corresponding to 
$\sigma=(5,4,1,3,2)$ is given by
 \begin{equation} 
 \label{eq:schubex}
 A_\sigma=
 \left( \begin {array}{ccccc} a_{{1,5}}&a_{{1,4}}&1&0&0
\\ a_{{2,5}}&a_{{2,4}}&0&a_{{2,3}}&1
\\ a_{{3,5}}&a_{{3,4}}&0&1&0\\a_{{
4,5}}&1&0&0&0\\ 1&0&0&0&0\end {array} \right) 
\end{equation}
The corresponding flag $V_i$ is the span of columns
$1$ through $i$.
The coordinates are labeled so that $A_\sigma$ is the result of eliminating a subset of the entries of $P_\sigma U$, where $U\in B$ satisfies $U_{i,i}=1$ and $U_{i,j}=a_{i,j}$ for $i<j$. The indeterminants 
consist of variables $a_{\sigma_j,\sigma_i}$ for 
which $i<j$ and $\sigma_i>\sigma_j$.

If $\mu$ is a composition of $n$, we let $P_\mu\subset G$ 
denote the corresponding parabolic subgroup, whose elements
satisfy $A_{i,j}=0$ when $i$ is in a block of $\mu$
which is after the block containing $j$.
The corresponding Schubert cells $A_{\sigma,\mu}$ 
are labeled by permutations $\sigma$ which are in
reverse order along each block of $\mu$. We have that
$A_{\sigma,\mu}$ is the result of eliminating entries within
$A_\sigma$ so that the columns corresponding to each block
of $\mu$ are in reduced row echelon form.

The \emph{Hessenberg varieties} are certain generalizations of the Springer fiber, introduced in 
\cite{demari1988eulerian,demari1992hessenberg}:
\begin{defn} 
\label{def:hessvariety}
Let \({ {S}}\in \gl(n)\), and let $h:[n]\rightarrow [n]$ be a  \emph{Hessenberg function}, meaning 
that it is weakly increasing and satisfies $h(i)\geq i$ for all $i$.
Then the \emph{Hessenberg variety} \(\hessgeo(S,h)\) is defined by the conditions
\begin{equation}\label{eq:HesDef}
  \hessgeo({  S},h)=
\{(V_1\subset \dots\subset V_n)\in \gFl_n:SV_j\subset V_{h(j)}\}.
\end{equation}
Alternatively, we may write
\begin{equation}  
\label{def:hessvarietya}
\hessgeo({  S},h)=\left\{gB \in G/B: g^{-1}Sg \in H_h\right\},
\end{equation}
where $H_h$
is the collection of all matrices $A$ such that
$A_{i,j}=0$ whenever $i\geq h(j)+1$, 
which satisfies $\mathfrak{b}\subset H_h$ and
$[\mathfrak{b},H_h]\subset H_h$, where
$\mathfrak{b}=\Lie(B)$.
\end{defn}

We also have the parabolic (or partial)
Hessenberg varieties
$\hessgeo_\mu(S,h)\subset G/P_\mu$, in which case $B$ is replaced by
$P_\mu$, which will appear in Section 
\ref{sec:cell-decomp-comp}.
The corresponding Hessenberg functions 
come from Hessenberg functions $\{1,...,k\}\rightarrow \{1,...,k\}$ where $k$ is the length of $\mu$,
which must be constant along the blocks of $\mu$.
One example is the partial version of the \emph{Peterson Hessenberg function} denoted $h_\mu$, which is given by setting
$h_\mu(j)$ to be the maximum value of $i$ which is
in the next block of $\mu$ after the one containing $j$,
or $h_\mu(j)=n$ if $j$ is in the final block.
See \cite{horiguchi2025partial,Precup2017HessenbergVO} for more details.

We are interested in the case of the \emph{regular 
nilpotent Hessenberg varieties}, 
that is \(S=N\) where \(N\) is the size \(n\) 
upper triangular Jordan block matrix
  \begin{equation}\label{eq:Ndef}
    N(e_i)=e_{i-1}
    \mbox{ for $2\leq i \leq n$},
    \ \ 
    N(e_1)=0.
  \end{equation} 
whose dimension is equal to
$\dim(\hf)=\sum_i (h(i)-i)$.
We denote the set of permutations corresponding to points in
$\hessgeo(N,h)$ by
\begin{equation}
    \label{eq:defhesscomb}
    \hesscomb(h)=\left\{\sigma\in S_n: P_\sigma \in \hessgeo(N,h)\right\}.
\end{equation}
We have the following descriptions of the collection of permutation matrices which are elements of 
$\hessgeo(N,h)$:
\begin{prop}
\label{prop:hessperms}    
The set $\hesscomb(h)\subset S_n$ consists of permutations satisfying any of the following equivalent properties:
\begin{enumerate}[a)]
\item \label{item:hesscomba} We have that $(P_\sigma^{-1} N P_{\sigma})_{i,j}=0$ for $1\leq j \leq n$, and 
$h(j)+1 \leq i \leq n$.
\item \label{item:hesscombb} For all $1\leq j \leq n$ and $h(j)+1\leq i \leq n$,
we have $\sigma_j\neq \sigma_i+1$.
    \item \label{item:hesscombc} For all $j$ we have $\sigma^{-1}(\sigma_j-1)\leq h(j)$, where by convention $\sigma^{-1}(0)=0$.
\end{enumerate}
\end{prop}
See equation (2.4) of the survey paper \cite{abe2020survey} for another reference on
these descriptions.
It was shown in \cite{tymoczko2007paving} that the intersections 
$X^{\circ}_\sigma \cap \hessgeo(N,h)$ 
as $\sigma$ ranges over $\hesscomb(h)$
determine a paving of $\hessgeo(N,h)$
by affines. This is summarized in our setting
in the following proposition:
\begin{prop}
\label{prop:hesscells}    
There is an affine paving 
\[\hessgeo(N,h)=\bigsqcup X^{\circ}_\sigma \cap \hessgeo(N,h)\] 
as $\sigma$ ranges over $\hesscomb(h)$.
The coordinates are given by the set
\begin{equation}
\label{eq:hesscoords}
\mathcal{A}^h_{\sigma}=\left\{a_{\sigma_i,\sigma_j-1}:i\geq h(j)+1,\ \sigma_i<\sigma_j\right\}
\end{equation}
\end{prop}

\begin{proof}
{A key step is that} the Hessenberg condition leads to equations in terms of the Schubert coordinates as described in
\eqref{eq:schubex} of the form
\begin{equation}
\label{eq:hesscellform}    
a_{\sigma_{i}+1,\sigma_j}=
a_{\sigma_i,\sigma_j-1}+
\sum_{k=1}^{n} a_{\sigma_i,\sigma_k} 
g^{k}_{i,j}, 
\end{equation}
where $g^k_{i,j}$ is a polynomial
with no constant term.
We then solve for 
the remaining variables 
$a_{\sigma_i,\sigma_j}$ appearing in $A_\sigma$ 
in terms of the ones in 
$\mathcal{A}^h_\sigma$.
\end{proof}
Notice that by item \ref{item:hesscombb})
of Proposition \ref{prop:hessperms} and the fact that
$\sigma_i>\sigma_j$, the variables in \eqref{eq:hesscellform}
are among the indeterminants appearing in $A_\sigma$.
Then the dimension of $X^{\circ}_\sigma \cap \hessgeo(N,h)$ is given by $\dim_h(\sigma)$, where
\begin{equation}
\label{eq:dimhess}    
\dim_h(\sigma)=\#\left\{ (j,i): 1\leq j<i\leq n,\ \sigma_j>\sigma_i,\ i\leq h(j)\right\},
\end{equation}
and also write $\codim_h(\sigma)=\dim(h)-\dim_h(\sigma)$.
We therefore have
\begin{cor}
We have that
$H^i(\hessgeo(N,h))=0$ for $i$ odd, and that the dimension
of $H^{2i}(\hessgeo(N,h))$ is the number of cells of dimension $i$, which determine a basis. 
\end{cor}

In \cite{harada2021filtration}, the authors produced another basis in terms monomials in the Chern classes: let $\C[\bx]\rightarrow H^*(\hessgeo(N,h))$ be the 
map which sends $x_i$ to the Chern class of the tautological bundle $c_1(\cL_i)$, and which factors through the map $H^*(\gFl_n)\cong R_n(\bx)$. The authors showed that 
the images of the monomials
\begin{equation}
\label{eq:hessbasis}    
\{x_1^{k_1}\cdots x_n^{k_n}:
k_i\leq h(i)-i\}
\end{equation}
determine a monomial basis of $H^*(\hessgeo(N,h))$.
In particular, the restriction map from the cohomology of the flag variety is surjective.

We now recall some results from the equivariant setting.
Consider the left action of 
\begin{equation}
\label{eq:hesstorus}
    \C^*=\{(z,...,z^n)\} \subset T=(\C^*)^n
\end{equation} 
on $\gFl_n$,
which acts with isolated fixed points, and preserves 
$\hessgeo(N,h)$, as well as the paving.    
Then the action is equivariantly formal,
and we have that $H^*_{\C^*}(\hessgeo(N,h))$
is free over $\C[\epsilon]$ and injects 
into the fixed point basis.
It was shown in \cite{abe2014equivariant} 
that the map 
$\C[\bx,\epsilon]\rightarrow H^*_{\C^*}(\hessgeo_h(N))$,
which evaluates a polynomial on the (equivariant) Chern classes as above, is surjective.

These statements have some purely algebraic consequences.
\begin{defn}
\label{def:chisets}
Given a subset $A\subset S_n$, we define a map
$\chi_A: \C[\bx,\epsilon]\rightarrow M_{S_n}=\bigoplus_{\sigma \in S_n} \C[\epsilon] \sigma$ of $\C[\bx,\epsilon]$-modules by
\begin{equation}
\label{eq:chiset}    
\chi_A(f(\bx,\epsilon))=
\sum_{\sigma \in A} f(\sigma_1\epsilon,...,\sigma_n\epsilon,\epsilon) \sigma.
\end{equation}
We denote the kernel of $\chi_A$ by
\begin{equation}
I_A=\left\{f\in \C[\bx,\epsilon]:
g(\sigma_1\epsilon,...,\sigma_n\epsilon,\epsilon)=0:\sigma \in A\right\}
\end{equation}
and the image by $M_A\subset M_{S_n}$, which is isomorphic to 
$\C[\bx,\epsilon]/I_A$.
\end{defn}

We can now translate the above facts 
into the following purely algebraic proposition.
\begin{prop}
    \label{prop:alghess}
Let $h$ be a Hessenberg function and let $\chi_h=\chi_{\hesscomb(h)}$ be as in Definition \ref{def:chisets}. Then the set of monomial images
\[\{\chi_h(\bx^\bk) : 0\leq k_i \leq h(i)-i\} \subset \bigoplus_{\sigma \in \hesscomb(h)} \C[\epsilon] \sigma\]
define a $\C[\epsilon]$-basis of $M_h$.
\end{prop}

\begin{proof}

We have that
$\chi_h$ factors through the composition
\[\C[\bx,\epsilon]\rightarrow H^*_{\C^*}(\hessgeo(N,h))
\rightarrow \bigoplus_{\sigma \in \hesscomb(h)} \C[\epsilon],\]
the first being the Chern class map, and the second being the localization. Since the first is surjective and the second is injective, we have that
$H^*_{\C^*}(\hessgeo(N,h))\cong M_h$.
By equivariant formality we have that
$M_h$ is free, and that
We have that $H^*(\hessgeo(N,h))\cong M_h/(\epsilon)M_h$.
Being a $\C$-basis of $M_h/(\epsilon) M_h$ is equivalent to 
being a $\C[\epsilon]$-basis of $M_h$.
    
\end{proof}

\subsection{Hessenberg paving combinatorics}

\label{sec:hesscomb}

We give another description of these three sets in \eqref{eq:bijrescarssched}, determined as a certain subset of the torus fixed points in a regular nilpotent Hessenberg variety.
The underlying geometry is closely related to the ``Hessenberg paving'' of affine Springer fibers 
\cite{goresky2003purity}.

\begin{defn}
\label{def:fltaucomb} 
Given $\tau\in S_n$, let $\mu=(|\run_1(\tau)|,...,|\run_k(\tau)|)$ be the \emph{run composition} whose elements are the sizes of the runs of $\tau$, and
let $(A_1,...,A_k)=\minosp(\mu)$ be the corresponding ordered set partition.
Let $\fltaucomb(\tau)$ be the set of permutations
$\sigma \in S_n$ such that $\sigma^{-1}_i\in A_j$ for some $j\geq k-i+1$, for all $1\leq i \leq k$.

\end{defn}

In other words, the number 1 in $\sigma$ appears to the right of the final descent in $\tau$, the number 2 appears to the right of the second to last one, etc.
For instance, the first condition says that
$\sigma^{-1}_1\geq n-l+1$
where $l=|\run_k(\tau)|$ is the number of elements in the final run, which is the same as
the multiplicity of zero in 
$\majt(\tau)$. The space whose fixed points are described by $\fltaucomb(\tau)$
will be discussed in Section \ref{sec:geohess}.

\begin{defn}
\label{def:hesstau}    
We define $\hesscomb(\tau) \subset S_n$
by
\begin{equation}
\label{eq:hesstau}    
\hesscomb(\tau)=
\hesscomb(\hf_\tau) \cap \fltaucomb(\tau)
\end{equation}
where $\hf_\tau$ is the Hessenberg function
defined by
\begin{equation}
\label{eq:hftau}
\hf_\tau(i)
=\min(i+\sched_i(\tau),n)
=i+\sched_i(\tau)-
\begin{cases}
    1 & \tau_i\in \run_k(\tau)\\
0 & \mbox{otherwise}
\end{cases}
\end{equation}
where as above $\run_k(\tau)$ is the final run of $\tau$.
\end{defn}

\begin{ex}

For $n=3$ we would have
\[h_{(1,2,3)}=(3,3,3),\ \ 
h_{(1,3,2)}=(2,3,3),\ \ 
h_{(2,1,3)}=(2,3,3),\]
\[h_{(2,3,1)}=(3,3,3),\ \ 
h_{(3,1,2)}=(3,3,3),\ \ 
h_{(3,2,1)}=(2,3,3).\]
Generally, the Hessenberg functions we obtain in this way are the ones bounded below by 
the Hessenberg function describing the Peterson variety,
$h(i)=\min(i+1,n)$.    
\end{ex}

We have a second description of $\hesscomb(\tau)$:
\begin{lemma}
\label{lem:eq:hessleq}    
For any $\tau\in S_n$, 
the elements 
of $\hesscomb(\tau)$ are the elements
of $\hesscomb(h_\tau)$ 
which additionally satisfy 
\begin{equation}
\label{eq:lemmahesstau}    
\sigma_i\neq 1 \mbox{ for $1\leq i \leq n-l$},
\end{equation}
where $l$ is the length of the final run of $\tau$.
\end{lemma}

\begin{proof}
Substituting $k=\sigma_i$
and setting $h=h_\tau$, we
can rewrite \eqref{eq:hesstau} as
    \begin{equation}
\label{eq:hessleq}        
\sigma^{-1}_{k-1} \leq 
\sigma^{-1}_k+\sched_{\sigma^{-1}_k}(\tau)
    \end{equation}
    Relabeling the indices again so that $k=\sigma_{i}$ and 
$k-1=\sigma_j$, we obtain
part 
for $1\leq j \leq n$. 
Now range $n-l+1 \leq i \leq n$ 
are the values at which 
$\sched_i(\tau)+i=n+1$, establishing the case of $i=1$ in Definition \ref{def:fltaucomb}.
Equation \eqref{eq:hessleq} shows that
the conditions for $i\geq 2$ follow from the condition for $i=1$.

\end{proof}

Now consider the map 
$\chi_{\hesscomb(\tau)}$ as well the ideal $I_{\hesscomb(\tau)}$
and module $M_{\hesscomb(\tau)}$
from Definition \ref{def:chisets}.
We have the following, which will be used in our proof of Theorem B.

\begin{prop}
\label{prop:schedindep}
The monomials $\{\bx^\bk:\bk\in \scheds(\tau)\}$ are linearly independent over $\C[\epsilon]$ in
$\C[\bx,\epsilon]/I_{\hesscomb(\tau)}$.
\end{prop}

We will see in the proof of Lemma \ref{lem:equivindep} that they are in fact 
a basis.

\begin{proof}
Let $h=h_\tau$ and $I_h$ be as above.
By Lemma \ref{lem:eq:hessleq}, we have that $\hesscomb(\tau)\subset \hesscomb(h_\tau)$ so that
$I_{h} \subset I_{\hesscomb(\tau)}$.
Now let
\begin{equation}
\label{eq:qdef}
    q(\bx,\epsilon)=(x_1-\epsilon)\cdots (x_{n-l}-\epsilon),
\end{equation}
where $l$ is the length of the final run of $\tau$, so that $q(\sigma_1 \epsilon,...,\sigma_n\epsilon,\epsilon)=0$ whenever $\sigma$
satisfies \eqref{eq:lemmahesstau}.
Then we have that
$q(\bx,\epsilon) 
I_{\hesscomb(\tau)}\subset I_h$,
so that there is a well defined map
$\C[\bx,\epsilon]/I_{\hesscomb(\tau)}
\rightarrow \C[\bx,\epsilon]/I_h$
induced by multiplication by
$q(\bx,\epsilon)$. 
It suffices to show that the images of the desired monomials are linearly independent in $\C[\bx,\epsilon]/I_h$.

Applying the ring isomorphism $x_i\mapsto x_i-\epsilon$, Proposition \ref{prop:alghess} implies that the monomials
\begin{equation}
\label{eq:abetranformed}    
\left\{(x_1-\epsilon)^{k_1}\cdots (x_n-\epsilon)^{k_n}: 0\leq k_i \leq h(i)-i\right\}
\end{equation}
are linearly independent over $\C[\epsilon]$
in $\C[\bx,\epsilon]/I_h$.
Now by \eqref{eq:hftau} from the definition of $h_\tau$, the monomials
\begin{equation}
\label{eq:hesstaubasis}
\left\{q(\bx,\epsilon)(x_1-\epsilon)^{k_1}\cdots (x_n-\epsilon)^{k_n}: \bk\in \scheds(\tau)\right\}
\end{equation}
are actually subset of \eqref{eq:abetranformed}, and so are also independent.
\end{proof}

We exhibit bijections of $\hesscomb(\tau)$ with the three sets in \eqref{eq:bijrescarssched}. 
We define an element $v_\tau\in W^+$ by
\begin{equation}
\label{eq:defvtau}    
v_\tau= w\tau,\ \ w_i=i-a_in,\ \ \ba=\majt(\tau).
\end{equation}
It is straightforward to show that $v=v_\tau$ is a Grassmannian affine permutation, 
meaning that $v_1<\cdots<v_n$ and that
$h_\tau$ is the Hessenberg function 
determined by the rule that
\begin{equation}
    \label{eq:vtauhf}
    j>h_\tau(i) \Leftrightarrow v_j>v_i+n.
\end{equation}
Define $\reshess_\tau: \Res(\tau)\rightarrow S_n$ by
\begin{equation}
\label{eq:reshess}    
\reshess_\tau(w)=\rho^{\maj(\tau)}wv_\tau
\end{equation}

A second map is given by
$\schedhess_\tau:\scheds(\tau)\rightarrow S_n$ 
as follows: first start by setting $\sig$ to be an
arrangement starting with the number $n+1$, which
we will think of as $\sig_0$.
Then for $i$ from $n$ to $1$, insert the number $i$
to the right of the $k_i$-th element of $\run_i(\sig)$,
where the order is the opposite of the order in which they
appear in $\sigma$, i.e. right to left. Finally, remove the leading $n+1$
and let $\sktof_\tau(\bk)=\sigma^{-1}$.

\begin{ex}
Let $\tau=(3,5,1,2,7,4,6)$, and
$\bk=(2,1,0,0,1,0,0)\in 
\Res(\tau)$,
which corresponds to the parking function 
in Figure \ref{fig:bigpf} under $\varphi$.
Then the sequence would be
\[8,\ 87,\ 876,\ 8756,\ 87546,\ 875436,\ 8754236,\ 87541236,\]
so $\sktof_\tau(\bk)$ would be 
$(7,5,4,1,2,3,6)^{-1}=(4,5,6,3,2,7,1)$.
\end{ex}

We now prove a third description of this set:
\begin{prop}
  \label{prop:hessbij}
  We have that $\hesscomb(\tau)$ is equal to the images of both $\reshess_\tau$
  and $\schedhess_\tau$, and each map is  a bijection onto its image.
  They are compatible with the bijections in \eqref{eq:bijrescarssched}, meaning that
  $\schedhess^{-1}_\tau \reshess_\tau=\ressched^{-1} \respf$.
Moreover, if $\sigma=\reshess_\tau(w)=\schedhess_\tau(\bk)$, then
$\codim_h(\sigma)=\codim_{n+1}(w)=|\bk|$.
\end{prop}
\begin{proof}
 

For the first map $p_\tau$, 
it is straightforward to translate the 
criteria determining that $\sigma^{-1} 
\in \hesscomb(\tau)$
to the criteria that $v_\tau \sigma$ is $(n+1)$-stable
using \eqref{eq:vtauhf}.
The criteria for being in $\hesscomb(h_\tau)$
correspond to the constraints in
\eqref{eq:stable}
for $i\in \{1,...,n-1\}$, whereas the additional criterion
in \eqref{eq:lemmahesstau} determining
$\hesscomb(\tau)$ corresponds to $i=n$.

For the second map,
 it is clear that \eqref{eq:hessleq} is satisfied at
  every step in the construction of 
  $\schedhess_\tau$,
  because each number is added to the right of a number in
  $r_i(\tau)$, and adding a smaller number to the left
  of any digit preserves the condition.
  This shows that $\im(\schedhess_\tau)\subset \hesscomb(\tau)$.

  To see the reverse, suppose that 
  $\sig^{-1}$ satisfies the desired condition,
  and let $\sig'$ denote the result of adding $\sig_i$ immediately to the right of
  $\sig_j$ at every step in Definition of 
  $\schedhess_\tau$, where $j$ is the
  largest index satisfying $j<i$, and $\sig_j>\sig_i$, or $j=0$ if none
  exists. It is clear that $\sig'=\sig$, and it remains to show
  that we necessarily have $\sig_j \in r_{\sig_i}(\tau)$, so that
  $\sig' \in \im(\schedhess_\tau)$.
  To see this, we simply confirm the equation
  \[\sig_{j}\leq \sig_{j+1}+ \sched_{\sig_{j+1}}(\tau) \leq \sig_{i}+\sched_{\sig_i}(\tau),\]
establishing that 
$\hesscomb(\tau)\subset \im(\schedhess_\tau)$.

We leave the other statements.

\end{proof}

\begin{ex} 
    \label{ex:cars}
We list the four sets for 
$\tau=(3,1,2,5,4)$. This example also discussed on the page 80  of the  book \cite{haglund2008catalan}, with a slightly different notations.
First, the schedules are given by
\[\sched(\tau)=(2,2,1,1,1),\ \ \scheds(\tau)=\{00000,01000,10000,11000\},\]
so that $\schedpfl(\tau)$ is
\[\{(11201,00000),(11201,00100),(11201,01000),(11201,01100)\}.\]

We also have $h_\tau=(3,4,4,5,5)$,
and $l=|r_3(\tau)|=1$, so that we have $\sigma_5=1$ for all $\sigma \in \fltaucomb(\tau)$. 
We find that $\hesscomb(h_\tau)$ has 36 elements, and that
\[\hesscomb(\tau)=\{(4,3,5,2,1),(4,5,3,2,1),(5,3,4,2,1),(5,4,3,2,1)\}.\]

Next, applying $\reshess_\tau^{-1}$, we obtain
\[\Res(\tau)=
\{(4, 3, 10, -4, 2), (5, 3, 9, -4, 2), (3, 4, 10, -4, 2), (3, 5, 9, -4, 2)\}\]
are the restricted permutations.

Finally, these sets correspond under the above bijections to the elements of $\cars(\tau)$ shown in Figure \ref{fig:cars}.
\end{ex}

\section{Hilbert schemes and related combinatorics}
In this section we study combinatorics of the Hilbert schemes of points of points on the plane. In Section~\ref{sec:hilbcomb} we study combinatorics of torus fixed locus \(\pfhilbcomb^{d}(n)\) of the parabolic { flag} Hilbert scheme that  { appears} in Section~\ref{sec:geohess}.
We relate this set to \(\PF(n)\). In Section~\ref{sec:hilbcells} we study the cell decomposition of the usual Hilbert scheme
\(\hilbgeo^d(\CC^2)\) that is equivariant with respect to the one-dimensional torus \(\CC^*_{a,b}\).

In Section \ref{prop:iarrobino} 
{ We show how the combinatorial quantities from the previous section appear as}
a fixed component of the fixed locus
the attracting set 
{ within} the punctual Hilbert scheme \(a=b=-1\), { which was 
shown by Iarrobino to be
a smooth affine fibration with a smooth base.} 
This result is used in
Section~\ref{sec:geohess}.

\subsection{Hilbert scheme combinatorics}

\label{sec:hilbcomb}

Call a subset $S\subset \zptwo$ an \emph{ideal} of 
colength $d$ 
if it is closed under addition
by the elementary unit vectors $e_1,e_2=(1,0),(0,1)$, and
$|\mathbb{Z}^2_{\geq 0}-S|=d$.  
The complement $\zptwo-S$ of an ideal
is a finite set which is
the Ferrers diagrams $\ferrers(\lambda)$ 
of some Young diagram $\lambda$. Using the terminology
of \cite{iarrobino1977punctual}, a partition is
said to have \emph{normal pattern}
if the diagonal of points 
$(a,b) \in \lambda$ for which $a+b=k$ consists
of the values $(0,k),...,(j,k-j)$ for some $j$.

For any $S \subset \zptwo$, we define 
the Hilbert series
\begin{equation}
\label{eq:defhscomb}    
\hs_{q,t}(S)=\sum_{(r,s) \in S} q^rt^s \in \C[[q,t]].
\end{equation}
If the set is a pattern $P$ associated
to a Young diagram $\lambda$, then we have
$\hs_{q,t}(P)=B_{\lambda}(q,t)$ where
\[B_\lambda(q,t)=\sum_{(r,s)\in \ferrers(\lambda)} q^r t^s.\]
If $S$ is the complement of $\lambda$, 
then $\hs_{q,t}(S)=M^{-1}-B_\lambda(q,t)$ where $M=(1-q)(1-t)$.
Now let $\hs_{z}(S)=\hs_{q,t}(S)|_{q=z,t=z}$ denote the Hilbert function corresponding to the total sum. There is at
most one partition $\lambda$ with normal pattern for each 
possible Hilbert series $\hs_z(\lambda)$.

We now define
$\pfhilbcomb^{d}(n)$ to 
be the collection of flags of partitions
$\parflag=(\lambda^0 \subset \cdots \subset \lambda^{n})$
such that $|\lambda^i|=d+i$, and
$S_{n} \subset e_1+S_{0}$
where \(S_i=\ZZ_{\ge 0}^2-\lambda^i\), $e_1+S$ is the set of vectors 
$(a+1,b)$ for $(a,b)\in S$, and $S_i$ is the complement
of $\lambda^i$.
This set corresponds to certain
torus fixed points of the
\emph{parabolic flag Hilbert scheme} defined in
\cite{carlsson2017parabolic}, which will appear in
Section \ref{sec:geohess}.
We let $\pfhilbcomb(n)$ denote the union of all the
$\pfhilbcomb^{d}(n)$ for all $d$.


\begin{prop}
  \label{prop:extmapaa} There is an injective map:
  \[\extcomb:\\\Res(n,n+1)\to \pfhilbcomb(n).\]
For any $w\in \Res(n,n+1)$,
the we define  $\extcomb(w)=(\lambda^0\subset \dots\subset \lambda^n)$ in 
$\pfhilbcomb(n)$  by
\begin{equation}
\label{eq:extmap}
     \zptwo-\lambda^i=\left\{(a,b)\in \zptwo :
    na+mb\in E_i\right\}   
\end{equation}
where $\latflag=\latcomb(w)$.
\end{prop}

The composition $\extcomb\circ \respf^{-1}$ where $\alpha$ is the bijection of restricted permutations to 
parking functions is particularly easy to describe:
the region above the graph of the underlying Dyck path 
corresponds to the Ferrer's diagram of $\lambda^d$,
while the subsequent partitions are the result of adding the labeled boxes in decreasing order.
For instance,
the first parking function in Figure \ref{fig:cars},
which corresponds
to $w=(4,3,10,-4,2)$ 
from Example \ref{ex:cars},
would map to the sequence
\begin{align}
\label{eq:xypflex}
\notag
(2,2,1)\subset (2,2,1,1) \subset (2,2,1,1,1)
\subset \\
 (3,2,1,1,1) \subset (3,2,2,1,1) \subset (3,3,2,1,1).
\end{align}
Notice that not all elements of $\pfhilbcomb(n)$ come from parking functions. The elements of the latter require that 
$\lambda^{n}-\lambda^0$ is a vertical strip, whereas 
for parking functions the strips will 
always have the additional boxes appear in exactly the top $n$ rows.

The image of $\cars(\tau)$ consists of flags
$\parflag$ which have the same value of
$\hs_z(\lambda^j)$ for each $j$. These polynomials will be denoted by $F_\tau^\bullet$ for $F^j_\tau(z)$.
The first one can be described explicitly in terms of 
$\tau$ by $F^0_\tau(z)=F_\mu(z)$ where
\begin{equation}
\label{eq:Fmu}    
   F_\mu(z)=\sum_{i=1}^{ n} iz^{i-1}-
    \sum_{i={ 0}}^{k-1} 
    (\mu_1+\cdots+\mu_{k-i}) z^{n-1-i}
\end{equation} 
and $\mu=(\mu_1,...,\mu_k)=\mu(\tau)$ is the run composition 
of $\tau$. The subsequent ones satisfy
$F^{i+1}_\tau(z)-F^{i}_\tau(z)=
z^{{ n-1-a_{n-i+1}}}$
where $(a_1,...,a_n)=\majt(\tau)$.
We record this description in the following proposition.
\begin{prop}
\label{prop:idealcars}
 The image of $\cars(\tau)$
in $\pfhilbcomb(n)$ under $\extcomb\circ \respf^{-1}$
is the collection of flags 
$\parflag=(\lambda^{d}\subset \cdots \subset \lambda^{d+n})$ satisfying $F_\tau^j(z)=\hs_z(\lambda^j)$.
\end{prop}
For instance, for the example in \eqref{eq:xypflex} we would have 
\[F^{0}_\tau(z)=1+2z+2z^2,\ \ 
(F_\tau^{i}(z)-F^{i-1}_\tau(z))_{i=1}^5={ (z^3,z^4,z^2,z^3,z^3)}.\]
This comes from an element of $\cars(\tau)$ for $\tau=(3,1,2,5,4)$,
for which $\majt(\tau)=(1,1,2,0,1)$. Said another way, we can recover $\tau$ by reading the exponents in decreasing order from left to right to break ties, in the way that recovers $\tau$ from its descent composition, { described in Section \ref{sec:combnoations}}.

\subsection{Cell decompositions of the punctual Hilbert scheme}

\label{sec:hilbcells}

Recall the Hilbert scheme of points in the plane
$\hilbgeo^d(\C^2)=\hilbgeo^d(\C[x,y])$, 
described as a set by
\begin{equation}
\label{eq:hilbring}    
\hilbgeo^d(R)=\left\{I\subset R
: \dim_{\C} R/I=d\right\}.
\end{equation}
We also have the \emph{punctual Hilbert scheme}
$\hilbgeo^d_0(\C^2)=\hilbgeo^d(\C[[x,y]])$ in which the polynomial
ring is replaced by the power series ring.
We have an inclusion $\hilbgeo^d_0(\mathbb{C}^2) \subset \hilbgeo^d(\mathbb{C}^2)$, 
whose image is identified with the collection of ideals
$I \in \hilbgeo^d(\mathbb{C}^2)$ whose vanishing set
is supported at the origin, meaning it is 
contained in some power of the maximal ideal 
$\mathfrak{m}=(x,y)$.
The Hilbert scheme is a smooth quasi projective variety
\cite{fogarty1968algebraic}, whereas the punctual Hilbert scheme is { just} irreducible \cite{briancon1977description}.

We have the usual well-studied action of the torus 
$\C^*\times \C^*$ on both $\hilbgeo^d(\C^2)$ and
$\hilbgeo^d_0(\mathbb{C}^2)$ by
\begin{equation}
\label{eq:fulltorushilb}
    (q,t)\cdot 
f(x,y)= f(qx,ty).
\end{equation}
The fixed point set of both spaces
consists of the discrete set of monomial ideals,
which is in bijection with the set of 
combinatorial ideals $\hilbcomb^d(\zptwo)$ from Section \ref{sec:hilbcomb}. 
These are in turn
in bijection with partitions $\lambda$ of size $d$,
and the corresponding ideals are labeled by $I_\lambda$.
For integers $a,b$, we let
$\C^*_{a,b}=\{(z^a,z^b)\}\subset \C^*\times \C^*$
be the one-dimensional subtorus with those weights 
whose fixed point loci may not be discrete 
for some particular $d$. 

Following \cite{ellingsrud1988cell,haiman1998catalan},
We describe a local system of coordinates about each
$I_\lambda$ as follows.
First, we have the standard monomials of $I_\lambda$,
given by $\mathcal{B}_\lambda=\left\{x^ry^s:(r,s)\in \lambda\right\}$, which determine a vector space basis of the quotient space $R/I_\lambda$.
Then we have an open cell, defined by
\begin{equation}
U_\lambda=\left\{I\in \hilbgeo^d(\C^2):
\mbox{$\mathcal{B}_\lambda$ spans
$R/I$}\right\}.
\end{equation}
For any $I\in U_\lambda$, there is a unique 
expansion
\begin{equation}
\label{eq:ulaexpansion}    
x^r y^s =\sum_{(h,k)\in \lambda} c^{r,s}_{h,k}(I) x^hy^k.
\end{equation}
The coordinates $(h,k)$ range over the boxes of $\lambda$,
whereas the $(r,s)$ range over all of $\zptwo$. For $(r,s)\in \lambda$,
we have $c^{r,s}_{h,k}$ is one if $(r,s)=(h,k)$ or zero otherwise.

We have a local coordinate system
$c^{r,s}_{h,k}(I)$
of $U_\lambda$ parametrized by a collection
$\mathcal{C}_\lambda=\{c^{r,s}_{h,k}\}$
of $2n$ particular coordinates,
defined as follows. Given $(h,k)\in \lambda$,
let $(f,k)$ denote the bottom square in column $k$ of
$\lambda$, and let $(h,g)$ denote the rightmost square
in row $h$. Then we have a local system of parameters 
$\mathcal{C}_\lambda$
consisting of the coordinates of the form $c^{h,g+1}_{f,k}$, or
$c^{f+1,k}_{h,g}$ (see \cite{haiman1998catalan} Corollary 2.5). 
These are in bijection with the weights of the tangent space to the Hilbert scheme at $\lambda$, so that we have an identity
\begin{equation}
\label{eq:hilbchartqt}    
\sum_{c^{r,s}_{i,j}}
q^{r-i}t^{s-j}=\hs_{q,t}(T^*_\lambda).
\end{equation}
The remaining coordinates may be solved for as a power series in terms of the others. It is shown that the $U_\lambda$ are 
open subvarieties, though in general are not affine cells.

\begin{ex}
\label{ex:esstrip}
If $\lambda=(2)$, the family of ideals $U_\lambda$ may be described by
\begin{equation}
(x^2-c^{20}_{10}x-c^{20}_{00},
y-c^{01}_{10}x+c^{01}_{10} c^{20}_{10}-c^{11}_{10}).
\end{equation}
While the coefficients in this example are polynomials in $c^{rs}_{hk}$, they are not in general.
\end{ex}

We describe the Bialynicki-Berula type
cell decomposition of 
$\hilbgeo^d(\C^2)$ and $\hilbgeo^d_0(\C^2)$
from \cite{ellingsrud1988cell} in these 
coordinates. Given $(a,b) \in \zptwo$,
let us denote the attracting sets of $U=\C^*_{a,b}$ by
\begin{equation}
\label{eq:escells}
\hilbgeo^{d}(\C^2)^{U,+}_\lambda=
\left\{I\in \hilbgeo^d(\C^2):
\lim_{z\rightarrow 0} z\cdot I=I_\lambda\right\}.
\end{equation}
The intersections with $U_\lambda$ can be described as follows.
Let 
\begin{equation}
    \mathcal{C}_\lambda^{a,b}=
    \left\{ c^{r,s}_{h,k} \in \mathcal{C}_\lambda:
    ar+bs>0\right\}.
\end{equation}
Let $S_{\lambda}^{a,b}=\C[\mathcal{C}^{a,b}_\lambda]$ denote the polynomial algebra generated by these coordinates. We have a compatible action of the full two-dimensional torus with 
scales each $c^{r,s}_{i,j}$ by $q^{r-i}t^{s-j}$.

Notice that if $a,b>0$, then not every point in the Hilbert scheme is in some attracting set, as points can be sent to infinity.
However, it was shown that intersecting the attracting set with $U_\lambda$ gives a cell decomposition of both the punctual and non-punctual Hilbert scheme:
\begin{prop}[\cite{ellingsrud1988cell}, Theorem 2+corollaries]
\label{prop:escells}
Suppose that $a,b<0$ and that $U=\C^*_{a,b}$ acts on $\hilbgeo^d(\C^2)$ with isolated fixed points. Then we have a cell decomposition of the full Hilbert scheme by the cells
$C^{a,b}_\lambda \cong \Spec(S^{a,b}_\lambda)$. If we instead have $a,b>0$, then we obtain a cell decomposition
of the \emph{punctual} Hilbert scheme by
$C^{a,b}_{\lambda}\cong \Spec(S^{a,b}_\lambda)$. The isomorphisms
are equivariant with respect to the action of the full torus
$T=\C^*\times \C^*$.
\end{prop}

\subsection{Decomposition of homogeneous ideals}

We now describe a homogeneous version of the Hilbert scheme studied by Iarrobino and later by G\"{o}ttsche \cite{iarrobino1977punctual,
gottsche1990punctual}. 
In Section \ref{sec:geohess},
we show that these spaces turn out to be
precisely the Grassmannian version 
of the space whose torus fixed points are
described by $\hesscomb(\tau)$ from 
Section \ref{sec:hesscomb}.

For any $I\subset \C[[x,y]]$,
let $\inideal(I)$ denote the ideal of 
initial forms of $I$ with respect to 
$\mathfrak{m}=(x,y)$. In other words, it is 
given by
\begin{equation}
\label{eq:initideal}    
\inideal(I)=\bigoplus \inideal_j(I) \subset 
\C[[x,y]],\ \ 
\inideal_j(I)=
\frac{I \cap \mathfrak{m}^j}{I\cap \mathfrak{m}^{j+1}}
\end{equation}
which is the kernel of the map to the associated graded ring of $\C[[x,y]]/I$.
We say that an ideal is homogeneous if it is generated by homogeneous elements with respect
to total degree. If $I$ is homogeneous, we have its Hilbert series $\hs_z(I)$ with 
respect to total degree, which is consistent with the notation of the previous section.

Now consider the collection
$\ghilbgeo(\C^2)\subset\hilbgeo_0(\C^2)$ 
of ideals which are homogeneous with respect to total
degree.
This is a disconnected set, 
whose components are determined by 
\begin{equation}
    \ghilbgeo(\C^2)_{F}=
    \left\{\mbox{homogeneous $I\subset \C[x,y]$}:
\hs_{z}(\C[x,y]/I)=F(z)\right\},
\end{equation}
which agrees with the fixed point set 
$\hilbgeo_0(\C^2)_F^U=\hilbgeo(\C^2)^{U}_F$
for $U=\C^*_{1,1}$. 
Then taking the initial ideal defines a function
which can be interpreted as taking the attracting point for
the this action
\begin{equation}
\inideal :\hilbgeo_0(\C^2)\rightarrow \ghilbgeo(\C^2),\ \ 
    \inideal(I)=\lim_{z\rightarrow 0} z\cdot I
\end{equation}
The fibers are therefore the attracting cells, giving a partition
\begin{equation}
\label{eq:grhilbpartition}
 \hilbgeo(\C^2)=\bigsqcup \ghilbgeo(\C^2)^+_F
,\ \ 
\ghilbgeo(\C^2)^{+}_{F}=
\left\{I: 
\inideal(I)\in\ghilbgeo(\C^2)_F\right\}
\end{equation}
and similarly for the punctual Hilbert scheme, whose components
are denoted $\ghilbgeo_0(\C^2)^+_F$.

\begin{prop}[\cite{iarrobino1977punctual} Theorem 2.12]
\label{prop:iarrobino}
For each polynomial with nonnegative integer coefficients $F$, 
let $a_i$ be the coefficient of $z^i$ 
in the power series
$1/(1-z)^2-F(z) \in \C[[z]]$, which is
the Hilbert series of 
any homogeneous ideal $I$ with 
$\hs_z(\C[[x,y]]/I)=F(z)$. Then 
\begin{enumerate}
    \item 
Either
$\ghilbgeo(\C^2)_{F}$ is 
empty, or it is a smooth variety of dimension 
$\sum_{i\geq 1} (a_i-a_{i-1})(a_{i+1}-a_{i}-1)$.
\item \label{item:iarrorank}
If it is nonempty, then 
$\ghilbgeo_0(\C^2)^{+}_{F}$
is an affine bundle over it of rank 
$\sum_{i\geq 1} a_{i-1}(a_{i+1}-a_i-1).$
The projection is the map which sends
$I$ to $\inideal(I)$.
\end{enumerate}

\end{prop}

We recall some of the supporting lemmas in the proof for future use, written in the context of 
Proposition \ref{prop:escells}, which appeared later. 
Let us denote by \(G^{a,b}_\lambda\) the intersection
\(C^{a,b}_\lambda\cap  \ghilbgeo(\C^2)\), which is the 
attracting set in \(\ghilbgeo(\C^2)\).
\begin{lemma}
\label{lem:iarrobinoproduct}
Let $\lambda$ be a partition whose Young diagram has a normal pattern, and suppose that $\deg(\hs_z(\lambda))<a<b$.
Then we have an isomorphism
\begin{equation}
\label{eq:normalprod}    
C^{a,b}_{\lambda}\cong G_{\lambda}^{a,b} \times \mathbb{A}^k
\end{equation}
where $\mathbb{A}^k$ is affine space of dimension $k$,
and $k$ is equal to the desired rank of the bundle
in Proposition \ref{prop:iarrobino}.
\end{lemma}
\begin{proof}
Using the equivariance with respect to the full torus 
from Proposition
\ref{prop:escells}, we see that $G_\lambda^{a,b}$ is described 
as a subspace
of $C^{a,b}_{\lambda}$ by the additional conditions that 
$c^{r,s}_{i,j}=0$ unless $r+s=i+j$. The desired expression for the rank can been seen to agree with the number of elements of $\mathcal{C}^{a,b}_\lambda$ which do not satisfy this property.
\end{proof}
The cells with normal pattern are the top-dimensional 
ones in $\ghilbgeo(\C^2)_F^{+}$ for 
$F(z)=B_\lambda(z,z)$, 
and are open. In fact, any ideal can be moved into this 
set under a change of coordinates:
\begin{lemma}
\label{lem:iarrobinounion}
For any $I\in \ghilbgeo_0(\C^2)_{F}^{+}$, there is a number 
$c\in \C$ such that the image of $I$ under the substitution 
$y=y+cx$ is in $C^{a,b}_{\lambda}$, where $\lambda$ is the pattern from the previous lemma. 
It is possible to select finitely
many numbers $c_i$ such that every $I$ is carried into 
$C^{a,b}_{\lambda}$ for some such transformation with $c=c_i$.
\end{lemma}

\begin{proof}
See \cite{iarrobino1977punctual} Proposition 3.2.
\end{proof}
Lastly, the full, non-punctual Hilbert scheme is smooth, 
{so the fixed locus for the action of $\C^*_{1,1}$ is also smooth. We have}
\begin{lemma}
\label{lem:iarroninosmooth}
The space of homogeneous ideals 
$\ghilbgeo(\C^2)_{F}$ is smooth.    
\end{lemma}

We can now prove Proposition \ref{prop:iarrobino}.
\begin{proof}
    By Lemma \ref{lem:iarrobinounion} we can
    describe $\ghilbgeo(\C^2)_F^{+}$ as a finite union of 
    isomorphic copies of $C^{a,b}_{\lambda}$ under the substitution $y=y+c_i x$. By Lemma
    \ref{lem:iarrobinoproduct}, we have that each one is locally trivial. The smoothness is Lemma \ref{lem:iarroninosmooth}.
\end{proof}

We next recall a result of G\"{o}ttsche describing a cell decomposition of this space, which was used to compute the Betti numbers of the punctual Hilbert scheme. 
\begin{prop}[\cite{gottsche1990punctual}, Theorem 2.1]
\label{prop:gottsche} 
For any $n>\deg(\hs_z(\lambda))$, we have cell decompositions
\begin{equation}
\label{eq:gottscheattracting}    
\ghilbgeo^d_0(\C^2)^{+}_{F}=\bigsqcup_{\lambda} C^{n,n+1}_{\lambda},\ \ 
\ghilbgeo^d(\C^2)_F=\bigsqcup_{\lambda} G^{n,n+1}_\lambda
\end{equation}
where $\lambda$ ranges over partitions for which
$F(z)=\hs_z(\lambda)$. The dimension of $G_\lambda$ is the 
number of coefficients $c^{r,s}_{i,j}$ for which 
$r+s=i+j$ and $i<r$.
\end{prop}

\begin{proof}
The left side of \eqref{eq:gottscheattracting}
amounts to checking that $C^{n,n+1}_{\lambda}$ lies entirely
in $\ghilbgeo^d_0(\C^2)^+_F$, which follows since 
the initial ideal
$\inideal(I)$ is in $C^{n,n+1}_\lambda$ whenever $I$ is
for this action.
The remaining statements follow from the fact that 
$G^{n,n+1}_\lambda$ can be described by setting the
remaining coordinates $c^{r,s}_{i,j}$ to zero,
making use of the equivariance statement in Proposition
\ref{prop:escells} again.
\end{proof}

\section{Cherednik algebras and the affine Springer fiber}
\label{sec:geometry}
We now recall some results about the affine Springer fibers and
affine flag varieties that we will need for our main results
in Chapter \ref{doubsec}.
The reader interested mainly in algebra can skip
everything in this section, except for possibly the
conventions for the root system in type $A$,
provided they are willing to take Proposition \ref{prop:spbasis} of Section \ref{sec:affine-schub-calc} on faith. In this paper, $(n,m)$ will always be coprime.

\subsection{Root systems}
In this section we fix our conventions on the root system for type \(A\).
Let $\fg=\fsl_n$, let
$\fghat=\fslhat_n$ be the corresponding affine Lie algebra,
and let $\fthat$ denote the Lie algebra 
of the maximal torus $\That \subset \widehat{SL}_n$.
Respectively, \(\underline{\fthat}\)  is Lie algebra of the
corresponding maximal torus of \(\widehat{GL}_n\):
\begin{equation} \label{eq:defweights}
\underline{\fthat}^*=  \langle \lambda_1,...,\lambda_{n},\delta\rangle.
\end{equation}
The ambient space \(\underline{\fthat}^*\) is equipped with the bilinear form
\[\langle \lambda_i,\delta\rangle=\langle \delta,\delta\rangle=0,\quad
  \langle\lambda_i,\lambda_j\rangle=-1/n,\quad
  \langle\lambda_i,\lambda_i \rangle=(n-1)/n,\]
where \(1\le i,j\le n\) and \(i\ne j\).
It is convenient to  define \(\lambda_i\), \(i\in \ZZ\) by
setting $\lambda_{i+n}=\lambda_{i}-\delta$ for all $i$.
Let us now define { the} subspace \(\fthat^*\).
The simple roots in \(\fthat^*\) are given by
\[\alpha_{i}=\lambda_{i}-\lambda_{i+1},\quad 0\leq i \leq n-1,\]
and these for a basis of \(\fthat^*\).
In particular,
the projection: \(\fthat^*\to \mathfrak{t}^*\) acts by
\begin{equation}\label{eq:proj-roots}
  \delta\mapsto 0,\quad \lambda_i\mapsto \eta_i,\quad i=1,\dots,n.
\end{equation}
Thus we fix notation \(\eta_i\) for the spanning set of \(\mathfrak{t}^*\) that satisfies the relation \(\eta_1+\dots+\eta_n=0\).

The action of the affine Weyl group
(see Section \ref{sec:affine-permutations}) 
is given by
\begin{gather}
  \label{wact}
 { s_i(\lambda_j)=\lambda_j-\langle\alpha_i,\lambda_j\rangle\alpha_i},\quad
  s_i(\delta)=\delta,\quad 
  w(\lambda_j)=\lambda_{w^{-1}_j},
\end{gather}
for $i,j\in\{0\dots,n-1\}$ and \(w\in W\).
The first equation defines the action of \(W\) on the ambient space \(\underline{\fthat}^*\) and this action preserves subspace \(\fthat^*\).

The third equation follows from the first two,
and in fact holds for any integer $j$, and is defined below for
extended affine permutations $w\in \extperms$ as well. 
The elements of \(\extperms \setminus W\) are not compositions of the reflections \(s_i\). Later we use the elements \(\psi_i, \rho\in \extperms\setminus W\), \(i=1,\dots,n\) and
the third formula in \eqref{wact} implies the action on 
on the ambient space:
\[{ \psi_i(\lambda_j)=\begin{cases}\lambda_j, &  j\ne i \\
  \lambda_i+\delta, & j=i
\end{cases},
\quad \rho(\lambda_j)=\begin{cases}\lambda_{j+1},&  j<n\\ \lambda_1+\delta,& j=n.
\end{cases},}
\]
where \(j=1,\dots,n\).

\subsection{The affine flag variety}
\label{sec:geometryFl}

Let \(G\) be a complex algebraic group such that its Lie algebra
\(\frg\) is simple. We define \(\calO=\CC[[t]]\) to be the ring of formal power series of \(t\), and its quotient field is \(\calK\).
Respectively, \(G(\calK)\) is the group of formal loops and \(G(\calO)\) is the subgroup of holomorphic loops.
The quotient \(\agrass=G(\calK)/G(\calO)\) has the structure of the ind-scheme,
as an inductive limit by smooth subschemes.
These subschemes can be taken as
the union of Schubert cells,
described below,
with an upper bound on the length.
For more details, see the survey \cite{Zhu16}.

The affine flag variety is the ind scheme \(\Fl=G(\calK)/I\) where \(I\subset G(\calO)\) is the subgroup of elements \(g(t)\in G(\calO)\)
such that \(g(0)\in B\). 
In this paper we assume that \(G=SL(n)\) and \(T\subset B\subset G\) are the maximal torus and the Borel subgroup.
In this paper we will generally use a superscript of $\grsup$ to distinguish subspaces of the affine Grassmannian, 
and either $\flsup$ or no subscript 
for the affine flag case.

A lattice in \(\CC^n\otimes \calK\) is a subspace \(\flaglat\) that is preserved by \(\calO\) and the intersection
\(\flaglat\cap \calO^n\) is of finite codimension inside \(\flaglat\) and \(\calO^n\). The index \(\mathrm{ind}(\flaglat)=\mathrm{codim}_{\flaglat}\flaglat\cap\calO^n-
\mathrm{codim}_{\calO^n}\flaglat\cap\calO^n\) is well-defined for a lattice.
Geometrically, the affine flag variety $\Fl=\Fl_n$
can be described compactly as the collection of flags of lattices 
$(L_0\supset \cdots  \supset L_n)$ for which
\begin{equation}
\label{eq:defaffflag}
\mathrm{ind}(\flaglat_0)=0,\quad
\flaglat_{i+1}\subset \flaglat_i,\ 
  \flaglat_i/\flaglat_{i+1}\cong \CC,\ 
  \ \flaglat_{n}=t\flaglat_0.
  \end{equation}
In this description we have the tautological line bundle \(\cL_i\) over \(\Fl\) has fiber \(\flaglat_i/\flaglat_{i+1}\) at the point \(\flaglat_\bullet\in\Fl\).
We may extend this cyclically using the final condition, and refer
to the flag $L_i$ for any $i\in \mathbb{Z}$, as well as the corresponding line bundle.

The torus \(\hat{T}=T\times\CC^*\) acts on \(G(\calK)\) by
allowing \(T\) to act by
left multiplication, and \(\CC^*\) 
to act by loop rotation
\(\mu\cdot g(t)=g(\mu^{-1} t)\) for $\mu \in \CC^*$.
This action has isolated fixed points which are enumerated by the affine permutations $W$. 
Indeed, if \(e_0,\dots,e_{n-1}\) is a basis of \(\CC^n\)
that is fixed by \(T\), then there is a unique flag of
torus-invariant lattices $\flaglat^w_\bullet\in \Fl$ satisfying
\begin{equation}\label{eq:Lambda-w}
\flaglat^w_i/\flaglat^w_{i+1}=\langle e_kt^{-m}\rangle,\quad
  w_i=mn+k,\quad  0\le k<n,\end{equation}
provided that $w$ satisfies:
\[w_i=w_{i-n}+n,\quad w_1+\dots+w_n= n(n+1)/2.\]
Thus, there is a natural identification between \(\tilde{\mathcal{F}}l^{\hat{T}}\) and \(\affperms\), as defined in Section~\ref{sec:affine-permutations}.

Recall that the group \(\affperms\) is generated by the reflections \(s_i\), \(i=0,\dots,n-1\) where \(s_i\) are defined in
Section~\ref{sec:affine-permutations}.
  There is also a natural homomorphism 
map \(W\to S_n\) that is defined as \(w\mapsto \sigma\), \(\sigma_i=
\langle w_i\rangle_n\) as defined after the formula~\ref{eq:rotshift}.
The kernel of the projection homomorphism the  free abelian group
with the generators \(\psi_{i}\psi_{i+1}^{-1}\), \(i=1,\dots,n-1\).
Here \(\psi_i\) are elements of the { extended} group \(\What\) and
they are defined by the formula~\eqref{eq:rotshift}.

There is a natural embedding \(\imath:\affperms\rightarrow G(\calK)\) such that \(\imath(w)=\flaglat_\bullet^w\).
The Bruhat decomposition  \(G(\calK)=\bigcup_{w\in\affperms}IwI\) induces the decomposition of \(\Fl\) into affine cells \(\Fl=\bigsqcup_{w\in\affperms}X^{\circ}_w\)
where \(X^{\circ}_w=IwI\) is the cell of dimension \(\ell(w)\). The affine Schubert variety \(X_w\) is the Zariski closure of
\(X_w^\circ\), which is the is the union of cells
\(X_w=\bigsqcup_{v\le_{bru}w} X^\circ_v\), where $\leq_{bru}$ is the Bruhat order. 
The Schubert cells in the case of the affine Grassmannian are given by
$X^{\grsup,\circ}_{w}=I w G(\mathcal{O})$ as $w$
ranges over the Grassmannian affine permutations.

We recall the construction 
of the equivariant
Borel-Moore homology from \cite{Graham01}.
In this paper, all (equivariant) homology and cohomology
groups will have coefficients in $\gf$.
Let \(Z\) be a scheme with an action of a linear algebraic group \(G\). Let \(V\)
be a representation of \(G\) and let \(U\subset V\) be an open subset where
\(G\) acts freely. Then the equivariant cohomology and Borel-Moore homology
are defined by:
\[\hbm_{G}^i(Z)=H^i(U\times^G Z),\quad \hbm_j^G(Z)=\hbm_{j+2(\dim V-\dim G)}(U\times^G Z),\]
where $U\times^G Z=(U\times Z)/G$,
provided the complex codimension of \(V-U\) in \(V\) is greater than \(i/2\)
and $\dim X-j/2$.

Notice that in our definition the homological degree is bounded from above by
\(2\dim Z\) and is not bounded from below. The main advantage
of using equivariant Borel-Moore homology 
is that we have a fundamental class \([Z]\in \hbm_{2d}^{G}(Z)\),
\(d=\dim Z\). In particular,  fundamental class \([pt]\in \hbm_{0}^{G}(pt)\) and
cap product provide an identification 
between
\(\hbm_*^{G}(pt)\) and \(H^*_G(pt)\).
Let us also notice that
\(\hbm_*^{G}(pt)\) and \(H^*_{G}(pt)\) both
have a ring structure and the above mentioned identification of both
spaces respects the ring structure. In particular, we fix notation for the ring:
\[S=\hbm_*^{\That}(pt)=H^*_{\That}(pt)=\mathrm{Sym}(\fthat^*).\]

Thus for any \(X\) with a
\(\That\)-action, the spaces \(\hbm_*^{\That}(X)\) and \(H^*_{\That}(X)\) are
naturally \(S\)-modules and the natural
pairing between these two spaces is 
\(S\)-linear.

The equivariant Borel-Moore
homology of the affine flag variety is defined as the
direct limit
\[\hbm_*^{\That}(\Fl)=\lim_{\rightarrow} \hbm_*^{\That}(X_\bullet).\]
It has the structure of noncommutative ring with an explicit algebraic
presentation, called the nil Hecke algebra, $\Aaf$
\cite{kostant1986nil,lam2014book}.
The Schubert classes, $A_w\in \Aaf$ for $w\in\awg$ are defined
as the fundamental classes $[X_w]$ of the closures of the
Schubert cells $\Omega_w$ again using
Borel-Moore homology \cite{lam2008schubert}.

Since we define \(\Fl\) as inductive limit of finite-dimensional schemes
\(X_\bullet\),
it is natural to define the cohomology as inverse limit with respect to the pullback maps:

\[H^*_{\That}(\Fl)=\lim_{\leftarrow}H^*_{\That}(X_\bullet),\]
as graded modules, as described in the last paragraph of \cite{Graham01}.
Then \(H^*_{\That}(\Fl)\) is a module over the equivariant cohomology of the point
\({  S=\mathrm{Sym}(\hat{\mathfrak{t}}^*)}\),
which may be identified as a submodule
\begin{equation}
  \label{cohlambdaeq}
\Lambda\cong H^*_{\That}(\Fl) \subset \Hom_S\left(\hbm_*^{\That}(\Fl),S\right).
  \end{equation}
Then the affine Schubert polynomials may be defined as a dual
basis to $A_w$, see \cite{kumar2002kac,lam2008schubert,lam2014book}.
We will denote by \(x_i\) the first Chern class \(c_1(\cL_i)\in H^*_{\hat{T}}(\Fl)\).
These classes, together with the pullback of
the equivariant cohomology of the affine Grassmannian,
generate the equivariant cohomology as an $S$-module,
with relations described in Section \ref{sec:nil-hecke-gkm}.

\subsection{The affine Springer fiber}
\label{sec:geometryASF}

Given an element \(\gamma\in\frg[t]\) the authors of \cite{KazhdanLusztig88} attach a subspace
\begin{equation}
\label{eq:defaffsp}
    \Sp_\gamma=\{gI : g^{-1}\gamma g 
    \in \Lie(I)\}\subset \Fl
\end{equation}
In the Grassmannian case we have
\begin{equation}
\label{eq:defaffspgr}
    \Sp^{\grsup}_\gamma=\{gG(\mathcal{O}) : g^{-1}\gamma g\in
    \mathfrak{g}(\mathcal{O})
    \}\subset \Gr,
\end{equation}
which is the image of \(\Sp_{\gamma}\) under the projection \(\Fl\to \agrass\).
A subgroup of \(T(\mathcal{F})\) 
consisting of elements commuting with \(\gamma\) naturally acts on \(\Sp_\gamma\) and $\Sp^{\grsup}_\gamma$.

The element \(\gamma\in\frg[t]\) is called homogeneous if \(\gamma(\torusrotelt^{-1}t)\) is conjugate to a scalar multiple of \(\gamma(t)\) for all \(\torusrotelt\in\CC^*\), see formula~\eqref{eq:conjD} for more details.
The homogeneous topologically nilpotent regular semi-simple elements were classified in \cite{OblomkovYun16} and the corresponding affine Springer fibers
have a natural \(\CC^*\)-action. Their homologies provide a geometric model for the representations
of the graded and rational Cherednik algebra of the corresponding type \cite{OblomkovYun16,VaragnoloVasserot10,Vasserot05}.
This paper deals only with the Springer theory in type \(A\), and we now recall the relevant results.

Let us denote by \(\gamma_{n,1}\in \frg[t]\) an element such that
\[\gamma_{n,1}(e_i)=e_{i+1},\quad i=0,\dots,n-2,\quad \gamma_{n,1}(e_{n-1})=te_0.\]
This element is homogeneous and regular semi-simple,
as is the element \(\gamma_{n,m}=\gamma_{n,1}^m\) for \(m>0\). If $(n,m)$ are coprime,
then the affine Springer fiber
\(\Sp_{n,m}=\Sp_{\gamma_{n,m}}\) is a projective variety,
which was first studied in \cite{LusztigSmelt91}.
Let \(j:\Sp_{n,m}\rightarrow \Fl\) be the inclusion map.

The full torus \(\hat{T}\) does not preserve the Springer fiber. 
However, given $s\in \C^*$, let
\(D(s)=\mathrm{diag}(s,s^2,\dots,s^n)/s^{(n+1)/2}.\) Then one can check that
\begin{equation}\label{eq:conjD}
  \torusrotelt^{1/n}D(\torusrotelt^{-1/n})\gamma_{n,1}(t) D(\torusrotelt^{1/n})=\gamma_{n,1}(\torusrotelt t).
\end{equation}
Thus the torus $U\subset \That$ preserves 
\(\gamma_{m,n}=(\gamma_{1,n})^m\)
up to a scalar multiple, where $U$ is the image of the map
\begin{equation}
  \label{utoruseq}
  \phi: \C^*\to T\times \CC^*=\That,\quad
  \phi(\torusrotelt)=(D(\torusrotelt^{-1/n}),\torusrotelt).
  \end{equation}

  As in \cite{OblomkovYun16} one needs to pass to the \(n\)-fold unramified cover \(U^{[n]}\) of \(U\) to work with the fractional powers in the last formula. The multiplication by
  \(n\) yields an isomorphism between \(H^*_U(pt)\) and \(H^*_{U^{[n]}}(pt)\) and we
  assume this isomorphism for the rest of the paper.

  In the paper \cite{OblomkovYun16} the Springer fiber \(\tilde{\mathcal{S}}_{n,m}\)
  is defined as \(\mathcal{S}_{\tilde{\gamma}_{n,m}}\) where
 \[\tilde{\gamma}_{n,m}(e_i)=e_{i+1},\quad i=0,\dots, n-2,\quad \tilde{\gamma}_{n,m}(e_{n-1})=e_0t^m.\]

 The element \(\tilde{\gamma}_{n,m}\) is conjugate to \(\gamma_{n,m}\). In the case that is most important for our results \(m=n+1\) and 
 we have \(D(t)\tilde{\gamma}_{n,n+1}(t)D(t)^{-1}=\gamma_{n,n+1}(t)\). The last formula together \eqref{eq:conjD} implies
 \[\torusrotelt^{(n+1)/n}D(\torusrotelt^{-(n+1)/n})\tilde{\gamma}_{n,n+1}(t) D(\torusrotelt^{(n+1)/n})=\tilde{\gamma}_{n,n+1}(\torusrotelt t)\]
 and that is exactly the \(\CC^*\) used
 in \cite{OblomkovYun16}. A similar argument is available for any \(m\) and thus the results from \cite{OblomkovYun16} apply in the setting of the current paper.

We fix our conventions by setting \(H^*_U(pt)=\C[\epsilon]\).
  Since $\tilde{\mathcal{F}}l^U=\tilde{\mathcal{F}}l^{\That}$,
  the fixed point
  set $\Sp_{n,m}^U$ is naturally a subset of
  $\tS_n$.
  This set is denoted \(\Res(n,m)\), and has an
  explicit description
given in Section~\ref{sec:affine-permutations}.

It was shown in \cite{LusztigSmelt91} that \(\Sp_{n,m}\cap X^\circ_w\), \(w\in \Res(n,m)\) is an affine space of dimension \(\dim_m(w)\ge 0\), where
\(\dim_m\) is
the combinatorial function defined in \eqref{eq:dimcell}.
Respectively, we denote by \(Y_w\) the closure of the intersection $Y^\circ_w=\Sp_{n,m}\cap X_w^{\circ}$.
As in \cite{Graham01}, there is a well-defined fundamental class \([Y_w]\in H^{\That}_*(\Sp_{n,m})\).
Then we have the following proposition:
\begin{prop}
  \label{injsurjprop}
  For $\Sp=\Sp_{n,m}$ with $(n,m)$ coprime, we have
  \begin{enumerate}[a)]
  \item \label{injpart} The pushforward map $j_*: \hbm_*^U(\Sp)\otimes_{\CC[\epsilon]} \CC[\epsilon^{\pm 1}]\rightarrow \hbm_*^U(\Fl)\otimes_{\CC[\epsilon]} \CC[\epsilon^{\pm 1}]$ is
    injective.
  \item \label{surjpart} The restriction map $ j^*:H^*_{U}(\Fl)\otimes_{\CC[\epsilon]}\CC[\epsilon^{\pm 1}]\rightarrow H^*_U(\Sp)\otimes_{\CC[\epsilon]}\CC[\epsilon^{\pm 1}]$ is
    surjective.
  \item \label{injlocpart} The localization map 
    $i^*_{\Res(n,m)}:H^*_U(\Sp)\rightarrow 
    H^*_U(\Res(n,m))$
    to the fixed point set is injective.
  \item \label{freegenpart}
   The equivariant Borel-Moore homology is freely generated
    over $\gf[\epsilon]$ by the fundamental classes
    $[Y_w] \in \hbm_*^U(\Sp)$ for $w\in \Res(n,m)$.
  \item \label{dualpart}
    The equivariant cohomology is freely generated
    by dual elements $[Y^w]\in H_U^*(\Sp)$, 
    such that the pairing of $[Y_v]$ with $[Y^w]$ is the delta
    function $\delta_{v,w}$.
  \end{enumerate}
\end{prop}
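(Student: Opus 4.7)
The plan is to first establish part (d) via the affine paving of Lusztig--Smelt, and then derive the remaining parts. For each \(v \in \Res(n,m)\), the intersection \(\Sp_{n,m} \cap X^\circ_v\) is a single affine cell of dimension \(d_v(n,m)\), giving a \(U\)-equivariant affine paving of \(\Sp_{n,m}\). The standard equivariant long exact sequence associated to the filtration by closed unions of these cells, combined with the fact that \(H_*^U\) of a \(U\)-equivariant affine space is free of rank one over \(\gf[\eps]\), yields part (d): the fundamental classes \([X_u]\) form a free \(\gf[\eps]\)-basis of \(H_*^U(\Sp)\). The same argument applied to the Bruhat paving of \(\Fl\) shows \(H_*^U(\Fl)\) is freely generated by the Schubert classes \([X_w]\), \(w\in\tS_n\), a fact needed in the rest of the argument.

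Parts (e) and (c) are then formal. For (e), once (d) is established, the equivariant cap product pairing \(H^*_U(\Sp) \otimes H_*^U(\Sp) \to \gf[\eps]\) is perfect, since we are dealing with a filtered space whose subquotients are \(U\)-equivariant affine spaces; hence \(H^*_U(\Sp)\) admits the desired dual basis \([X^v]\). Part (c) is the Borel localization theorem: since \(H^*_U(\Fl)\) is a torsion-free \(\gf[\eps]\)-module (by the cohomological analogue of (d) for \(\Fl\)) and the fixed point set is discrete, the restriction \(i^*_{\Res(n,m)}\) becomes an isomorphism after inverting \(\eps\), hence is injective on the torsion-free module itself.

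For part (a), I would use the same localization principle. After inverting \(\eps\), both \(H_*^U(\Sp)\) and \(H_*^U(\Fl)\) become isomorphic to the free modules on their fixed point sets \(\Sp^U = \Res(n,m)\) and \(\Fl^U = \tS_n\), and under these isomorphisms \(j_*\) is identified with the inclusion of the fixed point sets, which is visibly injective. Since \(H_*^U(\Sp)\) is torsion-free by (d), injectivity after inverting \(\eps\) pulls back to injectivity of \(j_*\) itself.

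The main obstacle is part (b). The approach I would take is to exhibit preimages of the dual basis \([X^v] \in H^*_U(\Sp)\) in \(H^*_U(\Fl)\) through a triangularity argument. Concretely, for \(w\in\Res(n,m)\) I would compute \(j^*[X^w]_{\Fl}\) by evaluating it against the basis \([X_u]_{\Sp}\) via the adjunction
\[
\langle j^*[X^w]_{\Fl},\,[X_u]_{\Sp}\rangle = \langle [X^w]_{\Fl},\, j_*[X_u]_{\Sp}\rangle,
\]
expanding \(j_*[X_u]_{\Sp}\) in the Schubert basis of \(H_*^U(\Fl)\) and using the fact that the paving of \(\Sp\) is a sub-paving of the Bruhat paving of \(\Fl\) to get a triangularity with respect to the partial order on \(\Res(n,m)\) induced by Bruhat order. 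Equivalently, once (a) and (e) are in hand, (b) is dual to \(j_*\) being a split injection of free \(\gf[\eps]\)-modules, and the Schubert structure on both sides provides the necessary splitting. The delicate point is verifying that the triangular change of basis is actually invertible over \(\gf[\eps]\), which boils down to checking that the diagonal entries, computed via the localization formula at the fixed points in \(\Res(n,m)\), are units in \(\gf[\eps]\), i.e.\ lie in \(\gf^\times\).
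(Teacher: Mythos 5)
The proposal diverges from the paper in structure and has a genuine gap in part (b). The paper's proof is essentially a list of citations: part (b) is attributed to Oblomkov--Yun \cite{OblomkovYun16,OblomkovYun17}; parts (d) and (e) to the formality theorems of \cite{GoreskyKottwitzMacPherson97} and \cite{Graham01}; part (c) to \cite{EdidinGraham98}, Proposition 6; and part (a) is deduced as a formal consequence of (b) and (d). Your reconstruction of (c), (d), (e) via the Lusztig--Smelt paving and Borel localization is reasonable and amounts to unwinding what those references establish. Your localization argument for (a) is also fine as a substitute for the paper's (a)$\Leftarrow$(b)+(d), since torsion-freeness from (d) plus injectivity of the inclusion of fixed-point sets over $\gf(\eps)$ does give injectivity of $j_*$.

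The real problem is part (b), which is the only genuinely nontrivial item and which you leave unresolved. The proposed triangularity argument fails at exactly the point you flag as ``delicate.'' Because the Springer--Schubert cell $\Sp\cap X^\circ_v$ can have dimension $d_v(n,m)$ \emph{strictly smaller} than $\ell(v)$, the class $j_*[X_v]_{\Sp}$ lives in a lower homological degree than $[X_v]_{\Fl}$, and the coefficient of $[X_v]_{\Fl}$ in the expansion of $j_*[X_v]_{\Sp}$ must be divisible by a positive power of $\eps$ to compensate. So the diagonal entries are \emph{not} units in $\gf[\eps]$, and the change-of-basis matrix is not invertible over $\gf[\eps]$ --- this is precisely why the splitting does not come for free from the cell structure. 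The paper's appendix states this obstruction explicitly: ``Somewhat unintuitively, the leading term of $C_w$ is not $A_w$ under the limit $\eps\rightarrow 0$\ldots\ Nevertheless, the map $H_*(\Sp_{n,m})\rightarrow H_*(\Fl_n)$ is injective, even though there is not an obvious triangularity statement.'' Surjectivity of $j^*$ for these affine Springer fibers is a substantive theorem requiring input beyond the paving; the paper correctly treats it as a black box from Oblomkov--Yun rather than something deducible from localization and Schubert combinatorics alone. Your proposal would need to invoke that theorem (or reprove it) to close the gap, and as it stands the logical order is also inverted relative to the paper: you derive (a) independently in order to attack (b), whereas the paper takes (b) as given and obtains (a) as a corollary.
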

\begin{proof}
  Part \ref{surjpart}) is proven in
  \cite{OblomkovYun16,OblomkovYun17}.
  Parts \ref{freegenpart})
  and \ref{dualpart})
  follow from the formality theorem for cohomology
  \cite{GoreskyKottwitzMacPherson97},
  and the formality of the homology \cite{Graham01}, Proposition 2.1.
  Part \ref{injpart}) follows from parts
  \ref{surjpart}) and \ref{freegenpart}),
  and part \ref{injlocpart}) follows from \cite{EdidinGraham98},
  Proposition 6.
\end{proof}

\subsection{Action of the Cherednik algebra}
\label{sec:cher-algebra-acti}

Let us recall the definition of the graded Cherednik algebra \(\grH\). As a \(\C\)-vector space,
\[\grH=\C[u,\delta]\otimes\Sym(\mathfrak{t}^*)\otimes\C[\affperms],\]
with grading given by
\[\deg \tilde{w}=0,\quad \tilde{w}\in \affperms, \]
\[\deg(u)=\deg(\delta)=\deg(\xi)=2,\quad \xi\in\mathfrak{t}^*.\]

Let us fix notation \(\fthat^*=\frt^*\oplus \langle \delta\rangle\) and a section of the projection \eqref{eq:proj-roots}:
\begin{equation}\label{eq:sec-roots}
{ \lambda_i=\eta_i-\delta/n,\quad i=1,\dots,n.}
\end{equation}
The algebra structure is defined by the \(W\)-action from \eqref{wact} and the relations:

\begin{enumerate}
\item \(u\) is central.
\item \(\C[\affperms]\) and \(\Sym(\fthat^*)\) are subalgebras
\item { \(s_i\xi-s_i(\xi) s_i=\langle\xi,\alpha_i\rangle u\), \(\xi\in \fthat^*\), \(i=0,\dots,n-1\)}.
\end{enumerate}

The element \(\delta\in \fthat^*\) is also central, and thus for \(\nu\in\C\)
we can define an algebra
\[\grH_\nu=\grH/(u+\nu\delta).\]
This is the {\it the graded Cherednik algebra with the central charge} \(\nu\). We set the image of \(\delta=-u/\nu\) to be \(\epsilon\).
If we specialize \(\epsilon\) to \(1\) we obtain the algebra \(\grH_{\nu,\epsilon=1}\) which is the trigonometric algebra in the literature.

The rational Cherednik algebra was introduced in \cite{EtingofGinzburg02}, the  rational Cherednik algebra is
a doubly graded degeneration DAHA introduced by Cherednik \cite{Cherednik95}. The trigonometric Cherednik algebra \(\grH_{\nu,\epsilon=1}\) plays intermediate role, it is a
single graded degeneration of DAHA and the trigonometric algebra
degenerate to the rational Cherednik algebra, for further discussion
of the degenerations one can consult for example Section 7 of \cite{Oblomkov04}.
In the current paper we use notations and conventions of the paper
\cite{OblomkovYun16}.

The subalgebra \(\C[\epsilon]\otimes\C[\affperms]\) has a trivial representation and the induced representation
\[\mathrm{Ind}_{\C[\epsilon]\otimes\C[\affperms]}^{\grH_\nu}(\C[\epsilon])=\C[\epsilon]\otimes\Sym(\mathfrak{t}^*),\]
is called {\it polynomial representation} of \(\grH_\nu\).
The subalgebra \(\Sym(\frt^*)\) acts  by  multiplication on this representation. On the other hand there is
a standard action of \(\affperms\) on \(\C[\epsilon]\otimes\Sym(\frt^*)=
\Sym(\hat{\frt}^*)\)
given by \eqref{wact}.
The action of \(\C[\affperms]\subset \grH_\nu\) is a deformation of the standard action, the generator \(s_i\), \(i\in\{0,\dots,{n-1}\}\) acts
by the (right) operator
\begin{equation}
  \label{geomodrighteq}
 {  s_i+\nu\epsilon\frac{1-s_i}{\lambda_i-\lambda_{i+1}}.}
  \end{equation}

  The polynomial representation and its irreducible quotient has a geometric realization. Indeed, the equivariant Chern classes \(c_1(\mathcal{L}_i)\), \(i=1,\dots,n-1\)
  generate localized equivariant cohomology
  \(H^*_U(\Fl)\otimes \C(\epsilon)\), see Section 2.3 in \cite{BezrukavnikovFinkelberg08}. 
  Hence there is a natural isomorphism
  \(H^*_{U,\epsilon=1}(\Fl)=\Sym(\frt^*)\).
  Under this identification
\(H^*_{U,\epsilon=1}(\Fl)\) acquires structure of \(\grH_{m/n,\epsilon=1}\)-module. Respectively, \(H^*_U(\Fl)\) becomes an \(\grH_{m/n}\)-module.
The embedding \(j:\Sp_{n,m}\rightarrow \Fl\) induces the pullback map between the cohomology groups. This map was studied in \cite{OblomkovYun16}:

\begin{thm}\cite{OblomkovYun16}
  \label{oythm}
  For any coprime $(n,m)$ we have
  \begin{enumerate}[a)]
  \item The kernel of \(j^*\) is preserved by \(\grH_{m/n}\), i.e. \(j^*\) is a homomorphism of
    \(\grH_{m/n}\)-modules.
  \item The equivariant cohomology at \(H^*_{U,\epsilon=1}(\Sp_{n,m})\) is the unique irreducible finite dimensional
    $\grH_{m/n,\epsilon=1}$-module
    \(\cL_{m/n}(triv).\)
  \end{enumerate}
  
\end{thm}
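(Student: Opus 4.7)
The plan is to prove the three parts in sequence, exploiting the GKM description of both cohomology rings from Proposition \ref{injsurjprop}.

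For part (a), I would use Proposition \ref{injsurjprop}(\ref{injlocpart}) and (\ref{dualpart}): $H^*_U(\Fl)$ injects into $S$-valued functions on $\awg$, while $H^*_U(\Sp)$ is freely generated by a dual basis to $\{[X_v]\}_{v\in \Res(n,m)}$ and also embeds into functions on $\Res(n,m)\subset \awg$. It therefore suffices to exhibit, for each $v\in \Res(n,m)$, a lift in $H^*_U(\Fl)$ of the corresponding dual class. The affine Schubert polynomials indexed by $v$ are the natural candidates: by the Bruhat triangularity they enjoy on $\Fl$, their pullbacks along $j^*$ form a system whose pairings with $[X_u]$ are upper-triangular with nonzero diagonal entries, so they span $H^*_U(\Sp)$, yielding surjectivity.

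For part (b), the algebra $\grH_{m/n}$ is generated by $\Sym(\smt)$, $\Q[\awg]$, and $\epsilon$. The Chern classes $x_i = c_1(\cL_i)$ and $\epsilon$ act by cup product on both sides and commute with $j^*$, so multiplication by these generators automatically preserves $\ker(j^*)$. For the remaining generators $s_i$, which act by the deformed operator $s_i + \nu\epsilon(1-s_i)/(x_i - x_{i+1})$ from \eqref{geomodrighteq}, I would work at the GKM level: $\ker(j^*)$ consists of classes whose fixed-point values vanish on $\Res(n,m)$, and I would verify that both the affine Weyl action and the divided-difference piece $(1-s_i)/(x_i-x_{i+1})$ preserve this vanishing condition. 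This reduces to a combinatorial check about stability of functions vanishing on $\Res(n,m)$ under these two operations; the injection in Proposition \ref{injsurjprop}(\ref{injlocpart}) then transfers the statement back to the cohomology rings.

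For part (c), finite-dimensionality of $H^*_{U,\epsilon=1}(\Sp_{n,m})$ follows from compactness of $\Sp_{n,m}$ in the coprime case. The $\grH_{m/n,\epsilon=1}$-module structure is inherited from parts (a)--(b). To identify it with $\cL_{m/n}(triv)$, I would first produce a lowest-weight vector annihilated by the positive-degree part of $\Sym(\smt)$ and generating a trivial $\awg$-isotype: the image in cohomology of the unit class is the natural candidate, with symmetry under $\awg$ following from the $\grH_{m/n}$-equivariance of $j^*$. Universality of the induced module $\cL_{m/n}(triv)$ then produces a surjection $\cL_{m/n}(triv)\twoheadrightarrow H^*_{U,\epsilon=1}(\Sp_{n,m})$, and a dimension count against $|\Res(n,m)|$, which matches $\dim \cL_{m/n}(triv)$ for coprime $(n,m)$, upgrades the surjection to an isomorphism and forces irreducibility.

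The main obstacle is part (c), specifically ruling out that $H^*_{U,\epsilon=1}(\Sp_{n,m})$ could be a proper quotient of a reducible standard module rather than the simple head $\cL_{m/n}(triv)$. Showing that the lowest-weight vector generates the whole cohomology under the deformed $s_i$-operators requires a delicate computation on Schubert classes, and ultimately rests on the classification of finite-dimensional trigonometric Cherednik modules at central charge $m/n$. Matching the full bigraded character, as opposed to the total dimension, would require comparing against the Lusztig--Smelt formula for the affine paving dimensions $d_v(n,m)$ on the geometric side and the known character of $\cL_{m/n}(triv)$ on the algebraic side.
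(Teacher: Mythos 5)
The paper does not actually prove this theorem: it is stated with the attribution ``(Oblomkov, Yun \cite{OblomkovYun16,OblomkovYun17})'' and cited wholesale as input, so there is no in-paper proof to compare against. That said, one part of your sketch does align with an argument the paper gives elsewhere. For part (b), the paper supplies, in the proof of Proposition \ref{spactprop}, exactly the kind of fixed-point computation you propose: it writes out the modified $s_i$ as a $2\times 2$ block in $F_{\awg}$, namely $w *_m s_i = \bigl(\tfrac{m}{w_{i+1}-w_i}\bigr)w + \bigl(1-\tfrac{m}{w_{i+1}-w_i}\bigr)ws_i$, and then uses the equivalence $w_{i+1}-w_i=m \Leftrightarrow ws_i\notin\Res(n,m)$ to conclude that the span of $\Res(n,m)$ is stable, hence the dual operators preserve $I_{n,m}$. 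Your GKM reformulation (functions vanishing on $\Res(n,m)$ are stable under the deformed $s_i$) is the dual of this statement, so your plan for (b) is essentially the paper's own ``simple algebraic proof'' phrased in cohomology rather than homology.

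Your parts (a) and (c) are roadmaps rather than proofs, and in both cases the substantial content is still missing. For (a), the triangularity-of-pairings argument requires knowing that the Springer--Schubert classes $[X_u]$ are genuinely triangular against Schubert polynomials with nonvanishing diagonal; in the paper that is part of Proposition \ref{cvprop}(\ref{cvpropnz}), which is itself \emph{deduced} from the formality/surjectivity package, so you would need to first import the Lusztig--Smelt affine paving and the GKM/Graham formality results to avoid circularity, and you do not spell this out. For (c), note a small slip: universality produces a surjection from the induced (polynomial) representation $\mathrm{Ind}_{\Q[\epsilon]\otimes\Q[\tilde S_n]}^{\grH_\nu}(\Q[\epsilon])$, not from $\cL_{m/n}(triv)$ itself, which is by definition the irreducible quotient; identifying the image with $\cL_{m/n}(triv)$ is precisely the hard step, as you acknowledge. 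That step, together with the character comparison, is where the real work of \cite{OblomkovYun16,OblomkovYun17} lies and is beyond what can be assembled from the ingredients in this paper.
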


\section{Affine Schubert calculus}
\label{sec:affine-schub-calc}
We review some background on affine Schubert calculus, for which
we refer to Goresky, Kottwitz, and MacPherson \cite{GoreskyKottwitzMacPherson97},
as well as Lam \cite{lam2008schubert},
Kostant and Kumar \cite{kostant1986nil},
and the book of Lam, Lapointe, Morse,
Schilling, Shimozono, and Zacbrocki \cite{lam2014book}.
We follow the descriptions of the latter.

\subsection{The nil Hecke and GKM rings}\label{sec:nil-hecke-gkm}
Let 
\[{ S=\mathrm{Sym}(\fthat^*),\quad F=\mathrm{Frac}(S),}\]
and consider the noncommutative algebra
\[F_{\awg}=\bigoplus_{w \in \awg} F w,\]
with product given by
\[(fu)(gv)=f\cdot u(g) uv,\]
where $f,g \in F$, and the action of $\awg$ on $F$
is determined by equation \eqref{wact}.
The inclusion $W\hookrightarrow F\cdot W$
determines a left and right action of $W$
on $F_W$, in such a way that the left action acts internally on the ground ring. We similarly have a conjugation action by all extended permutations.

For any $i\in \{0,...,n-1\}$, let
\begin{equation}
  { \label{Adefeq}
  A_i=\frac{1}{\alpha_i}(1-s_i).}
  \end{equation}
These operators satisfy the braid relations in type $A$,
and so one may define
\[A_w=A_{i_1}\cdots A_{i_k}\]
whenever $w=s_{i_1}\cdots s_{i_k}$ is a reduced word.

\begin{defn}
  The subring generated by the $A_i$ and $S\subset F$ is
called the \emph{affine nil Hecke algebra}, 
denoted by $\Aaf$.
It is graded by assigning the elements of 
$\fthat^*$ degree 1, and letting the degree
of $w$ be zero, so that
$A_w$ has degree $-l(w)$.
Respectively, there are two variants of the object dual to \(\Aaf\):
\[\hat{\Lambda} =\left\{f \in \Hom_F(F_W,F):
    f({A_w})\in S\right\},\]
\[\Lambda=\left\{f\in \hat{\Lambda}:
    f(A_w)=0 \mbox{ for all but finitely many $w$}\right\}.\]
\end{defn}

The \(S\)-module \(\Lambda\) is actually
an $S$-algebra with respect to the (commutative) product of pointwise multiplication
\[(fg)(w):=f(w)g(w), \quad f,g\in \Lambda,\quad w\in W. \]
Respectively, \(\Aaf\) has a natural
\(\Lambda\)-action:
\[f\cdot \sum_w c_ww=\sum_wf(w)c_w w,\quad f\in \Lambda,\quad \sum_w c_ww\in \Aaf.\]
Also, $\Lambda$ is a free $S$-module with basis
\[\xi^v(A_u)=\delta_{u,v}.\]

We also have particular elements
$\underline{x}_i\in \Lambda$ for  all  \(i\), such that, \(\underline{x}_{i+n}=\underline{x}_i-\delta\)  and these elements are given by
\[\underline{x}_i(w)=w(\lambda_i)=\lambda_{w_i}\in S.\]
where $\lambda_i$ are as in \eqref{eq:defweights},
whose action on $\aaf$ is given by diagonal multiplication by $w(\lambda_i)$ in the fixed point basis. The left and right \(W\)-actions  on \(\Lambda\) defined to satisfy
relations
\[(w\cdot f)(w\cdot a)=f(a)=(f\cdot w)(a\cdot w), \quad f\in \Lambda, a\in \Aaf, w\in W.\]
The left and right actions of $W$
preserve both $\aaf$ and $\Lambda$, and are related to $\underline{x}_i$ by $w \underline{x}_i w^{-1}=x_{w_i}$.  Let us also notice that
\(\underline{x}_1+\dots+\underline{x}_n=\delta\).

The classes $A_\sigma$ for $\sigma \in S_n \subset W$ span a subalgebra $\mathbb{A}\subset \aaf$ corresponding to the classical, non-affine algebra.
We have an element
\begin{equation}
\label{eq:deltahat}
\widehat{\Delta}_n=
{ \frac{1}{\prod_{i<j} (\lambda_i-\lambda_j)}
\sum_{\sigma \in S_n} \signrep(\sigma) \sigma}
\end{equation}
which agrees with $A_{w_0}$,
where $w_0=(n,...,1)\in S_n$ is the maximal length permutation.

Then the elements $\xi^v$ in $\Lambda$ satisfy
  \begin{equation*}
    \label{schubdef}
    \partial_i \xi^{v}=\begin{cases} \xi^{v s_i} & l(vs_i)<l(v)
      \\ 0 & \mbox{otherwise} \end{cases}
    ,\quad
\Ad_\rotelt(\xi^v )=\xi^{\rotelt w \rotelt^{-1}},
\end{equation*}
where $\partial_i:\Lambda\to \Lambda$ is the BGG  operator 
\begin{equation}\label{eq:DMop}
  \partial_i(f)={ \frac{f-f\cdot s_i}{\underline{x}_i-\underline{x}_{i+1}}.}
\end{equation}
In fact, they are determined uniquely by $\xi^{1}=1$, and
either the first relation, or the second equation combined with
the first for $i\neq 0$ (see \cite{Billey1997KostantPA}).
Let us also remark that 
\(\xi^v\) are polynomials of \(\underline{x}_i\).

We have the following presentation, due to Kostant and Kumar:
\begin{prop} (Kostant, Kumar \cite{kostant1986nil})
  \label{prop:kostantkumar}
We have isomorphisms of graded $S$-modules
\begin{equation}
  \label{lambdafleq}
  \hbm^{\That}_*(\Fl)\cong \Aaf,\quad H_{\That}^*(\Fl)\cong \Lambda,
  \end{equation}
in which the Schubert cycles $[X_w]$ map to $A_w$, the
dual classes $[X^w]$ in cohomology map to $\xi^{v}$. The pointwise multiplication on $\Lambda$ agrees with the ring structure in equivariant cohomology, and the pairing between
homology and cohomology agrees with the pairing between $\Aaf$
and $\Lambda$. The $\underline{x}_i$ correspond to 
the Chern classes of the tautological line bundles $\underline{x}_i=c_1(\mathcal{L}_i)$.
\end{prop}

\subsection{The nonequivariant limit}

The affine nil Coxeter algebra $\aafo$
is the subalgebra of $\aaf$
generated by $A_w$ over $\C$, but not the nonconstant elements of $S$. 
It is noncommutative, and the relations are given by
\begin{equation}
  \label{eq:nilcox}
  A_u A_v = \begin{cases}
    A_{uv} & l(uv)=l(u)+l(v), \\
    0 & \mbox{otherwise.} \end{cases}
\end{equation}
Then 
$\aafo\cong\aaf \otimes_S \C= \aaf$ where $\C$ is the $S$-module on which the maximal ideal acts by zero. By equivariant formality, we have
\[\hbm_*(\Fl)\cong \aafo,\ \ 
H^*(\Fl)\cong \Lambda_0.\]
where $\Lambda_0=\Lambda \otimes_S \C$.  We also use the notation \(x_i\) for the non-equivariant limit of \(\underline{x}_i\in \Lambda\). In particular, we have
\(x_1+\dots +x_n=0\)  and \(x_{i+n}=x_i\).

Let $\phi_0:S\rightarrow \C$ be the map which sends all $\lambda_i$ to zero, so that 
$\mathfrak{m}=\ker(\phi_0)$ 
is the maximal ideal of $S$.
Then the map which ``forgets'' equivariance is given by
$\phi_0:\aaf\rightarrow \aafo$ given by
\[\phi_0:\sum_{w} a_w A_w\mapsto \sum_{w} \phi_0(a_w) A_w,\]
and similarly for 
$\Lambda\rightarrow \Lambda_0\cong \Lambda/\mathfrak{m} \Lambda$, which is a ring homomorphism.



Following Lam \cite{lam2008schubert}, 
call a word $i_1\cdots i_k$ in the symbols 
$i_j\in \Z/n\Z$ \emph{cyclically decreasing}
if each letter appears at most once,
and we have that $i+1$ always precedes $i$
whenever both letters appear.
We say that $w\in W$ is cyclically decreasing
if there is some reduced word $w=s_{i_1}\cdots s_{i_k}$ for which $i_1\cdots i_k$ is cyclically decreasing. We denote the set of cyclically decreasing affine permutations  by $\cycdec{n}$.

For $0\leq k \leq n-1$, define
\begin{equation}
\label{eq:lamelts}
h_k=\sum_{w \in \cycdec{n}:\inv(w)=k} A_w \in \aafo
\end{equation}
The $h_k$ generate a commutative subalgebra of $\aafo$
called the \emph{Stanley-Fomin subalgebra}.
The algebra $\Lambda_{(n-1)}=\C[h_1,...,h_n]$
is the ring which contains the $k$-Schur functions \cite{lam2014book}.
Notice that $h_0$ acts by the identity, and so is not included as a generator.

The algebra \(\Lambda_{(n-1)}\) is naturally isomorphic to
the homology  algebra \(\hbm_*(\Gr)\) of the affine Grassmannian, as defined by Bott \cite{Bott58}. The projection map \(\pi: \Fl\to \Gr\)  is
a smooth map with fibers \(\gFl\) and \(\hbm_*(\Fl)=\hbm_*(\Gr)\otimes \hbm_*(\gFl)\).
The cohomology classes \(x_i\in \Lambda_0\) become the
Chern classes of the tautological line bundles of \(\gFl\) in the above product.
The homology \(\hbm_*(\Fl)\) are generated from the fundamental
class by cap product operations with elements of \(H^*(\Fl)\).
The fundamental class of a fiber of \(\pi\) is equal to
$\Delta_n=A_{w_0}\in \aafo$, where
$w_0=(n,...,1)\in S_n$ is the maximal length element. Then we have
\begin{prop}
\label{prop:afsiso}    
The action of left multiplication by $h_k$
on $\aafo$ commutes with multiplication by 
Chern classes $x_i$.
We have an isomorphism
\begin{equation}
\label{eq:afsiso}
   R_n(\bx)\otimes \Lambda_{(n-1)}
     \cong \aafo
\end{equation}
of modules over $\C[\bx]\otimes \Lambda_{(n-1)}$,
in which $1\otimes 1$ is sent to $\Delta_n$.
\end{prop}

\subsection{The nil Hecke Algebra and the affine Springer fiber}  

\label{sec:springact}
We represent the (equivariant) Borel-Moore homology of 
the affine Springer fiber 
in terms of the nil Hecke algebra.

Fix coprime $(n,m)$, and consider the subtorus $\Tres\cong \C^*\subset \That$
from \eqref{utoruseq}, which preserves $\Sp_{n,m}$. The
corresponding evaluation map $\fthat^*\rightarrow \affsptorus^*$
is given by
\begin{equation}
\label{sptorus}
{ \lambda_i\mapsto\left(\frac{n-1-2i}{2n}\right)\epsilon,\quad \delta\mapsto\epsilon,}
\end{equation}
where $\epsilon \in \affsptorus^*$ is the equivariant parameter, which determines a ring homomorphism \(\Sym(\fthat^*)\to \CC[\epsilon]\).
As explained in Section \ref{sec:geometryASF},
we have the same fixed point sets
\(\Flatletter^{\hat{T}}=\Flatletter^U=W \), and
the evaluation map $\hbm_*^{\That}(\Fl)
\rightarrow \hbm^U_*(\Fl)$ can be realized by taking
\eqref{sptorus} in the fixed point basis, as the denominators will never vanish.

Thus let us introduce the related specialized 
\(\CC[\epsilon]\)-modules:
\[
  \Aaf^U=\Aaf\otimes_S \C[\epsilon],\quad \Lambda_U=\Lambda \otimes_S \C[\epsilon],\]
and observe that \(\Aaf^U\) is naturally a \(\CC[\epsilon]\) submodule of \(\bigoplus_{w\in W}\CC[\epsilon^{\pm 1}] w\),
which is isomorphic to $\hbm_*^U(\Fl)$.
As a \(\CC[\epsilon]\)-module, \(\Aaf^U\) is isomorphic to \(\Aaf^0\otimes \CC[\epsilon]\), 
and the algebra morphism
\(\Aaf^U\to \Aaf^0\) that evaluates the coefficients in the
$A_w$ basis at \(\epsilon=0\) is compatible with the
specialization $\hbm_*^U(\Fl)\rightarrow \hbm_*(\Fl)$.

Next recall that $\Res(n,m)\subset W$ correspond to the
fixed point set $\Sp_{n,m}^U$. we define an ideal $I_{n,m}\subset \Lambda_U$ as the kernel of 
the inclusion map
\[i_{\Res(n,m)}^* : \Lambda_U \rightarrow \bigoplus_{u \in \Res(n,m)} \gf[\epsilon] u,\]
where the coefficient of $f$ is the evaluation of $f(u)\in \gf[\epsilon]$.
Since \(\Lambda_U=H^*_U(\Fl)\), we have a geometric interpretation for the  quotient:
\[\Lambda_U/I_{n,m}= j^*(H^*_U(\Fl)).\]
Respectively, we have a dual object inside \(\Aaf^U=\hbm_*^U(\Fl)\) is defined by
\[\Saf=\left\{c \in \Aaf^U: f\in I_{n,m}\Rightarrow f(c)=0\right\}.\]

\begin{prop}
  \label{prop:spbasis}
  We have that ${ \Saf \cong j_*(\hbm_*^U(\Sp_{n,m}))\subset \hbm_*^U(\Fl)}$
  is the image under the
  inclusion map. 
The image of the classes $[\spclass_w]$ determine elements
\begin{equation}
\label{eq:defbclass}    
B_w =\epsilon^{-d_w}\sum_{v\in \Res(n,m)} c_{v,w} v \in \Saf
\end{equation}
  for each $w\in \Res(n,m)$ satisfying:
  \begin{enumerate}[a)]
  \item \label{cvpropnz} The coefficients are rational numbers satisfying
    \[b_{v,w}\neq 0 \Rightarrow v \leq_{bru} w,\ \ b_{w,w}\neq 0.\]
\item \label{cvpropdeg} The degrees are given by $d_w=\dim_m(w)$ as defined in \eqref{eq:dimcell}.
\item \label{item:classsp} For $w\in S_n$, we have that $B_{w}$ is the evaluation of $A_w$ under \eqref{sptorus}.
  \end{enumerate}  
\end{prop}

In particular, taking $w=w_0$,
we have an element
\begin{equation}
\label{eq:deltatilde}    
\tilde{\Delta}_n=\frac{c}{\epsilon^{n(n-1)/2}} 
\sum_{\sigma \in S_n} \signrep(\sigma)\sigma
\in \Saf
\end{equation}
for $c$ a constant, coming from the specialization of $\widehat{\Delta}_n$ from \eqref{eq:deltahat}.
More generally,
the coefficients of the elements $B_w$ can be calculated for $w\in S_n$
by Billey's formula \cite{Billey1997KostantPA}, 
but for other elements
$w\in \Res(n,m)$, it is not even clear which coefficients are nonzero.

The affine Weyl group action on the homology of affine Springer fibers fibers was introduced by Lusztig \cite{Lusztig96}. This action was studied further by many authors, for
a detailed treatment of the relevant of the Demazure-Lusztig operators for this action see \cite{ChrisGinzburg}. 
The relation between the action graded Cherednik algebra  on the homology of the affine flag variety  and on the homology of the homogeneous affine Springer fiber is discussed in 
\cite{OblomkovYun16}. Below we give a purely algebraic proof of a variant of the corresponding statement from \cite{OblomkovYun16}:
\begin{prop}
  \label{spactprop}
  The Demazure-Lusztig operators (see \eqref{geomodrighteq}, \eqref{eq:DMop}):
  \begin{equation}
    \label{spacteq}
    f*_m s_i= f\cdot s_i+\nu\epsilon \partial_if,\quad \nu=m/n,
    \end{equation}
for ${ 0} \leq i \leq n$ define a right action of $\awg$ on $\Lambda_U$.
These operators, as well as conjugation by $\rotelt$,
preserve $I_{n,m}$, and hence the dual actions preserve
$\Saf \subset \Aaf^U$. 
In particular, the non-equivariant right action of $\awg$ and
$\rotelt$ preserves the subspace $\Saf\otimes_S \C \cong j_*(\hbm_*(\Sp_{n,m}))$.
\end{prop}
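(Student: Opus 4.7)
The plan is to reduce the statement to Theorem~\ref{oythm}(b) of Oblomkov and Yun, which asserts that $\ker(j^*: H^*_U(\Fl) \to H^*_U(\Sp_{n,m}))$ is preserved by the graded Cherednik algebra $\grH_{m/n}$.

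The first step is to recognize the formula \eqref{spacteq}, together with conjugation by $\varphi$, as coming from the $\grH_{m/n}$-action on its polynomial representation.  Applying the commutation relation $s_i \xi - s_i(\xi) s_i = \langle \xi, \alpha_i^\vee\rangle u$ with $u = -\nu\delta = -\nu\eps$ and moving $s_i$ through a monomial in elements of $\smt$ produces exactly \eqref{geomodrighteq}$=$\eqref{spacteq} as the right action of $s_i$ on $\gf[\eps]\otimes\Sym(\smtd)$, while \eqref{phiacteq} realizes $\Ad_\varphi$ as conjugation by $\varphi \in \tSn' \subset \grH_{m/n}$.  Since $\awg$ and $\varphi$ embed into the associative algebra $\grH_{m/n}$, the resulting operators automatically satisfy the Coxeter and extended-affine relations, so $*_m$ defines a right action of $\awg$ on $\Lambda_U$ that is compatible with $\Ad_\varphi$.

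The second step is to identify $I_{n,m}$ with $\ker(j^*)$ under the isomorphism $\Lambda_U \cong H^*_U(\Fl)$ of Proposition~\ref{lambdaflprop}.  By Theorem~\ref{oythm}(a), $j^*$ is surjective, and by equivariant localization at fixed points (Proposition~\ref{injsurjprop}(c)), $H^*_U(\Sp_{n,m})$ injects into $\bigoplus_{u \in \Res(n,m)} \gf[\eps]\,u$.  Composing these, a class $f \in \Lambda_U$ lies in $\ker(j^*)$ if and only if its restrictions $f(u)$ vanish for every $u \in \Res(n,m)$, which is precisely the defining condition for $I_{n,m}$.  Combined with Theorem~\ref{oythm}(b), this shows that $*_m s_i$ and $\Ad_\varphi$ preserve $I_{n,m}$.

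Preservation of $\Saf \subset \Aaf^U$ by the dual operators is then automatic: $\Saf$ is by construction the $\gf[\eps]$-annihilator of $I_{n,m}$ under the natural pairing between $\Aaf^U$ and $\Lambda_U$, so it is preserved by the transpose of any operator preserving $I_{n,m}$.  For the non-equivariant limit, the operators $*_m s_i$ and $\Ad_\varphi$ are polynomial in $\eps$, hence descend under $\eps \mapsto 0$ to operators on $\Aaflim$, and preserve $\eqlim{\mathbb{S}}_{af} \cong H_*(\Sp_{n,m})$ by Proposition~\ref{injsurjprop}(a) and (d).  The main obstacle is the first step---pinning down the precise identification of the algebraic formulas \eqref{spacteq} with the geometrically defined $\grH_{m/n}$-action of Theorem~\ref{oythm}---after which everything else follows from duality and the compatibility of the action with the specialization $\eps \mapsto 0$.
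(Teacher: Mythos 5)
Your proof is correct, but it takes a genuinely different route from the one the paper actually writes out. The paper explicitly acknowledges your route ("the statement about the modified operators are due to Oblomkov and Yun \cite{OblomkovYun16,OblomkovYun17}"), but then deliberately bypasses it and gives what it calls "a simple algebraic proof in our case." Concretely, the paper computes the dual of $*_m s_i$ directly in the fixed-point basis of $F_{\awg}$:
\[w *_m s_i = \Bigl(\tfrac{m}{w_{i+1}-w_i}\Bigr) w + \Bigl(1 - \tfrac{m}{w_{i+1}-w_i}\Bigr) ws_i,\]
observes that the coefficient of $ws_i$ vanishes exactly when $w_{i+1}-w_i = m$, and then checks combinatorially that for $w \in \Res(n,m)$ this is equivalent to $ws_i \notin \Res(n,m)$. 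Hence the span of $\Res(n,m)$ in $F_{\awg}$ is preserved, which is dual to the preservation of $I_{n,m}$; the $\varphi$ case is just that conjugation permutes $\Res(n,m)$. Your proof instead identifies \eqref{spacteq} with the $\grH_{m/n}$-action on the polynomial representation (via \eqref{geomodrighteq}), identifies $I_{n,m}$ with $\ker(j^*)$ through surjectivity and localization, and cites Theorem~\ref{oythm}(b). Both are valid: your route is shorter once the Cherednik machinery is invoked, but it imports the full strength of a geometric theorem, whereas the paper's self-contained computation is more elementary and, importantly, yields the explicit matrix coefficients \eqref{spactfawg} and the equivalence \eqref{wsinonzero}, which are then reused directly in the proof of Lemma~\ref{zinfalem} (the nonvanishing of the coefficients $b_u(\eps)$). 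One small citation slip in your write-up: the injectivity of localization you need is for $H^*_U(\Sp_{n,m})$, not $H^*_U(\Fl)$; this follows from the formality statement, Proposition~\ref{injsurjprop}(d)--(e) (part (c) as printed appears to misstate the domain, and in any case concerns a different map).
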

\begin{proof}
  First, note that the conjugation action of $\rotelt$
  preserves the kernel of the evaluation map given by equation
  \eqref{sptorus}, and so at least acts on $\Aaf^U$.
  It preserves the kernel simply because
  conjugation by $\rotelt$ preserves the subset $\Res(n,m) \subset \awg$.

  The statement about the modified operators are due to Oblomkov
  and Yun \cite{OblomkovYun16,OblomkovYun17}, but we give a simple
  algebraic proof in our case: as elements of 
$F_{\awg}$, we have  
\begin{equation}
  \label{spactfawg}
  w *_m s_i= \left(\frac{m}{w_{i+1}-w_i}\right)w+
  \left(1-\frac{m}{w_{i+1}-w_{i}}\right)ws_i.
\end{equation}
Notice that this produces a $2\times 2$ matrix that squares to the identity.
From this, we see that the coefficient of $ws_i$ is zero
if and only if $w_{i+1}-w_i=m$. It is straightforward to see that
if $w\in \Res(n,m)$, then
\begin{equation}
  \label{wsinonzero}
  w_{i+1}-w_i=m\Leftrightarrow ws_i \notin \Res(n,m).
  \end{equation}
Therefore the reflection operators preserve the span of
$\Res(n,m)\subset F_{\awg}$, and hence the dual reflection operators
preserve $I_{n,m}$.

The statement that this defines an action of $\awg$
can also be proved algebraically.
\end{proof}

\section{Double Coinvariants}
\label{doubsec}
In this section we will state and prove our main results.

\subsection{Commuting variables}

We define an action of $DR_n$ 
on $\hbm_*(\Sp_{n,n+1})$.

\begin{defn}  
\label{def:xyaction}
    Define $\C[\epsilon]$-linear maps $\tilde{x}_i,\tilde{y}_i : \Aaf^U \rightarrow \Aaf^U$ 
    where $\tilde{x}_i$ is multiplication by the Chern class
\begin{equation}
\label{ydualeq}
  \tilde{x}_i\cdot w=(c_{w_i}\epsilon) w,\ \ 
  { c_i=\frac{n-1-2i}{2n}},
\end{equation}
under the restricted torus action 
\eqref{sptorus},
    and
\begin{equation}
    \label{eq:zidef}
    \tilde{y}_i=\tilde{z}_i-1,\ \ 
    \tilde{z}_i(f)=\Ad_{\rho^{-1}}(f) *_{n+1} (\rho^{-1} \shiftelt_i).
\end{equation}
We have the induced operators $x_i,y_i,z_i$ on 
$\hbm_*(\Fl)\cong \aafo$.
  \end{defn}

\begin{lemma}
\label{lem:actionform}    
Under the isomorphism
$\hbm_*(\Fl)\cong \Lambda_{(n-1)} \otimes R_n(\bx)$,
the map $x_i$ is given by usual multiplication by $x_i$, and 
\begin{equation}
  \label{eq:yimul}
  y_i(f)=(x_i h_1+\cdots+x_i^{n-1} h_{n-1})f.
\end{equation}    
\end{lemma}

\begin{proof}

We first check that $z_i$ (and therefore $y_i)$ commutes with 
the operators $x_j$ and $h_k$:
since the right action of $W$ satisfies
$(\_ \cdot w)x_i=x_{w_i} (\_ \cdot w)$, and ${ x_i=x_{i+n}}$,
we find that $z_i$ commutes with $x_i$.
We can also see that $\Ad_{\rho^{-1}}$ commutes with $h_k$ since it preserves the cyclically decreasing condition, and since
$\Ad_{\rho^{-1}}(A_w)=A_{\rho^{-1} w \rho}$ 
for all $w$, noting that conjugation by $\rho^{-1}$ preserves the Bruhat order. The right multiplication by $\rho^{-1} \psi_i$ commutes with $h_k$ since $h_k$ is defined as a left multiplication.

Let $z'_i$ denote the expression on the right hand side of \eqref{eq:yimul} plus $f$, so that we are proving $z'_i=z_i$. Since
$z'_i$ commutes with $x_j$ and $h_k$ as well
by Proposition \ref{prop:afsiso}, it suffices to check that they take the same values on the generator, $z'_i \Delta_n=z_i \Delta_n$.
Using the rule that $wz_i=z_{w_i}w$ and similarly for $z'_i$, it suffices to check this equation for $i=n$. 

In this case we have
\[z_n \Delta_n=
\Ad_{\rho^{-1}} (\Delta_n(\psi_n \rho^{-1}))=
(-1)^{n-1}\Ad_{\rho^{-1}} (\Delta_n)=
(-1)^{n-1}A_{\rho^{-1} w_0 \rho},\]
noting that $\psi_n\rho^{-1}=s_{n-1}\cdots s_1 \in S_n$, which acts on $\Delta_n$ by multiplying by the sign, which can be seen in terms of fixed points \eqref{eq:deltahat}. 
It therefore remains to show that
$z'_n \Delta_n=(-1)^{n-1}A_{\rho^{-1} w \rho}$.
For this, we claim that
\begin{equation}
\label{eq:xwrules}    
h_k x_n^k \Delta_n=
(-1)^k(A_{w^{(k)}}-A_{w^{(k-1)}}),\ \ 
w^{(k)}=(s_{k-1}\cdots s_{0})
( s_{k}\cdots s_1) w_0.
\end{equation}
Assuming this holds, we can insert 
\eqref{eq:xwrules} into the expression \eqref{eq:yimul}
defining $z'_n \Delta_n$. The result would then
cancel in pairs, leaving
$A_{w^{(n-1)}}=(-1)^{n-1} A_{\rho^{-1} w_0 \rho}$,
completing the proof.

To prove \eqref{eq:xwrules}, we first claim that
\begin{equation}
\label{eq:monkstep}
x_n^k \Delta_n=(-1)^k A_{s_{k}\cdots s_1 w_0}.
\end{equation}
This can be checked using the usual (non-affine) Monk rule \cite{monk1959geometry}, which in this context says that
\begin{equation}
    \label{eq:monkform}
(x_1+\cdots+x_r) A_{\sigma} =
\sum_{\substack{1\leq i \leq r<j\leq n \\
\inv(w)=\inv(\sigma t_{i,j})-1}}
A_{wt_{i,j}}.   
\end{equation}
Then since $x_1+\cdots +x_n$ acts by zero, we find that $x_n A_\sigma=-(x_1+\cdots+x_{n-1})A_\sigma$,
from which \eqref{eq:monkstep} follows by induction
and \eqref{eq:monkform} at $r=n-1$.

Finally, plugging \eqref{eq:monkstep} into the left side of \eqref{eq:xwrules} and using
 \eqref{eq:lamelts}, we find that
 \begin{equation}
\label{eq:lamstep}     
h_k x_n^k \Delta_n=
(-1)^k\sum_{\substack{w\in \cycdec{n}:\inv(w)=k}} 
A_w A_{s_k\cdots s_1 w_0}
\end{equation}
Then using the fact that $A_i A_{w_0}=0$ for $1\leq i \leq n$, we can check that the affine permutations
$w$ contributing to  \eqref{eq:lamstep} are
$s_{k-1}\cdots s_0$ and $s_{k-2}\cdots s_0 s_k$.

\end{proof}

We will also need the following fact,
which is Proposition 4.5. from Haiman \cite{Haiman02}:
\begin{prop}[Haiman]
\label{prop:haimaninjective}
Suppose that $f(\bx,\by) \in \mplusxy$. Then we have that
\begin{equation}
\label{eq:prophaiman}    
f(\bx,\by)\big|_{y_i=\lambda_{n-1} x_{i}^{n-1}+\cdots + \lambda_1 x_i}\in (e_1(\bx),...,e_n(\bx))
\end{equation}
as an ideal in $\C[\bx,\lambda_1,...,\lambda_{n-1}]$.
\end{prop}
\begin{remark}\label{rem:open-Haiman}
For our purposes, we note that
this has the same form as \eqref{eq:yimul} but with the symbols $\lambda_i$ in place of $h_i$.
Interestingly, this formula plays a very different role in Haiman's paper, in which the variables $\lambda_i$
are the coordinates of an open chart
\[\left\{ (x^n,y-(\lambda_{n-1}x^{n-1}+\cdots+\lambda_1 x)\right\}\subset \hilbgeo^0_n \C^2\]
of the punctual Hilbert scheme of points in $\C^2$.
{
Notice also that these are the same as the coordinates
$c^{r,s}_{i,j}$ in the case where $\lambda$ is a strip,
as in Example \ref{ex:esstrip}.}

\end{remark}

We can now state our first theorem:  
\begin{thm}    
      \label{thma}
      The induced operators $x_i,y_j$ on $\hbm_*(\Fl)$
      commute, giving rise to an action of $\gf[\mathbf{x},\mathbf{y}]$.
Furthermore, this action satisfies the following properties:
      \begin{enumerate}[a)]
      \item \label{thmam} The elements of $\mplusxy$ act by zero,
        giving us an action of $DR_n$.
      \item \label{thmapres} The subspaces
        $j_*(\hbm_*(\Sp_{n,m})) \subset \hbm_*(\Fl)$
        are preserved, i.e. are submodules.
\item \label{thmaiso} 
  The map $DR_n\rightarrow j_*(\hbm_*(\Sp_{n,n+1}))$
  given by $f\mapsto f\cdot \Delta_n$ is an isomorphism.
\item \label{inj-homology} {  The inclusion  map \(j_*: \hbm_*(\Sp_{n,n+1})\to \hbm_*(\Fl)\) is injective.}
  
      \item \label{thmaact}
        There is an action of the extended affine Weyl group on $DR_n$ induced by the
        conjugation action, which is given by
\[ w x_i=x_{w_i}w,\quad   w y_i=y_{w_i} w,\quad
\sigma(1)=(-1)^{\sig},\quad \rotelt(1)=1+y_n,\]
where $w\in \extperms$ is any extended permutation, $\sig\in S_n$, and we have identified
the multiplication operators $x_{i+n}=x_i$, $y_{i+n}=y_i$.
\end{enumerate}
      \end{thm}

      \begin{proof}

It follows from \eqref{eq:yimul} that $x_i$ and $y_j$ commute.

To check part \ref{thmam}),
we must show that a non-constant multisymmetric
power sum, given by $p_{r,s}=x_1^ry_1^s+\cdots x_n^r y_n^s$
acts by zero. To do this, make the substitution \eqref{eq:yimul}
and collect the monomials in the $h_i$
to obtain a linear combination of $h_\mu$
with coefficients in $R_n(\bx)$.
Since the formula for $y_i$ begins in degree one, those coefficients have no constant term. Since the power sum is diagonally 
$S_n$-invariant and
the $S_n$ action preserves $h_\mu$, the coefficients must also be symmetric, and therefore zero.

Next, notice that the modified actions in \eqref{spacteq}
preserve $\Saf$, and all limit to the usual right action
modulo the relation $\epsilon=0$, so part
\ref{thmapres}) follows
from Proposition \ref{spactprop}.

For part \ref{thmaiso}), 
notice that 
$DR_n$ has dimension $(n+1)^{n-1}$ by \cite{Haiman02}, 
and 
$j_*(\hbm_*(\Sp_{n,m}))$ has dimension 
\emph{at most}
$(n+1)^{n-1}$ by
Proposition \ref{injsurjprop} items 
\ref{injpart}) and
\ref{freegenpart}).
It therefore suffices to show that the map
\begin{equation}
\label{eq:haimanchart}
    DR_n \rightarrow \Lambda_{(n-1)} \otimes R_n(\bx)
\end{equation}
determined by \eqref{eq:yimul} is an injection. 
This follows from 
Lemma \ref{lem:actionform} and
Proposition \ref{prop:haimaninjective}, substituting
$\lambda_i=h_i$.

Part \ref{inj-homology}) follows from the part \ref{thmaiso}) and the statement that \(\dim \hbm_*(\Sp_{n,n+1})=\dim DR_n\).
Finally, the relations in part \ref{thmaact}) hold equivariantly for the
modified actions,
and follow from definitions, as well as the twisting by the sign representation
in $R_n(\bx)\otimes \Lambda_{(n-1)}$.

\end{proof}

\subsection{Filtration by the descent order}

We now describe a filtration on the homologies of the affine
flag variety and Springer fiber by compositions, which we
relate to the order on monomials in the $y$-variables
that produce the ``descent monomials'' described below.
For the rest of the paper, we will be concerned with the case $m=n+1$.

Recall the definition of the index 
$\indt(w)$ of an affine permutation from Definition \ref{def:ind}.
\begin{defn}
\label{def:sptau}    
Given a composition $\ba$
we define
$\hessleq(\ba)\subset \Sp_{n,n+1}$ to be the union of the cells
$Y^\circ_w$ where $w$ ranges over elements $w\in \Res(n,n+1)$
 which satisfy $\indt(w)\leq_{des} \ba$.
\end{defn}

The following lemma shows that 
 $\hessleq(\ba)$ is a closed subspace.
\begin{lemma}
  \label{desbrulebm}
        The descent order is compatible with the Bruhat order,
        \[u \leq_{bru} v \Rightarrow \indt(u) \leq_{des} \indt(v).\]
      \end{lemma}      
\begin{proof}
  First, consider the case $|\ba|=|\bb|$, where $\ba,\bb=\indt(u),\indt(v)$ so that
  $\min(u)=\min(v)$.
  Furthermore, by using $\rotelt$, we can see that it suffices to consider the
  case $\min(u)\cong 0\ (\moda n)$. In this case, $\indt$
  is the same as the composition corresponding to the left coset space
  in $S_n\backslash \awg$. It is known that $u \leq_{bru} v$ implies that
  $\ba \leq_{bru} \bb$, where the Bruhat order on compositions is the
  same as the order on the coset spaces by taking minimal representatives
  in $\extperms$ \cite{haglund2008nonsymmetric}.
  It follows immediately
  that $\ba \leq_{bru} \bb$ implies that $\ba \leq_{des} \bb$, proving this case.
  
  We then see that $u \leq_{bru} v$ implies
  that $|\ba|\leq |\bb|$, so it remains to consider the case $|\ba|<|\bb|$.
  Since $a\neq b$, we only need to prove that
  $\sort(\ba)\leq_{lex} \sort(\bb)$, as the tie breaking case in
  Definition \ref{desdef} will never come up.
  It is well known that
  \[u \leq_{bru} v \Rightarrow u' \leq_{bru} v'\]
  where $u',v'$ are the associated Grassmannian permutations, i.e.
  the permutations whose window notations have the same
  values as those of $u,v$, but in increasing order.
  Since $\indt(u')=\sort(\indt(u))$, it suffices to assume
  that $u,v$ are Grassmannian permutations.

  In the case of Grassmannian permutations, there is an explicit description
  of the Bruhat order in terms of
  the ``unit increasing monotone function'' $\Z\rightarrow \Z$ given
  by
  \[\rotelt_{w}(j)=\sum_{i=1}^n \max\left(0,\left\lceil \frac{j-w_i}{n}\right\rceil\right),\]
  see Theorem 6.3 of \cite{Bjrner1996AffinePO}. We make the following claim, which
  is straightforward to check using this description:
  given Grassmannian permutations with $u\leq_{bru} v$, if $u_1>v_1$, then
  there exists $i$ and $j$ such that
  $v_1 \leq j < i=u_1$ and
  $w=t_{i,j}u \leq_{bru} v$, where $t_{i,j}\in \awg$ is the affine transposition
  that exchanges $i$ and $j$. It follows easily that
  $\indt(w)_k \geq \indt(u)_k$ for all $k$, so of course we have
  $\indt(u) \leq_{des} \indt(w)$. But now inductively on 
  $|\bb|-|\ba|$,
  we may assume that $\indt(w)\leq_{des} \indt(v)$, proving that
  $\indt(u) \leq_{des} \indt(v)$. 
\end{proof}


We now define
\begin{defn}
  \label{fadef}
  For a composition $\ba$ of $n$, we 
define $F_{\ba} \hbm_*(\Sp_{n,n+1})$ to be the image of $\hbm_*(\hessleq(\ba))$ in 
$\hbm_*(\Sp_{n,n+1})$, and similarly for
$F_\ba \Saf \cong 
F_\ba \hbm^U_*(\Sp_{n,n+1}) \subset \Saf$.
\end{defn}

We will denote the associated graded component by\[G_\ba \hbm_*(\Sp_{n,n+1})=F_{\ba} \hbm_*(\Sp_{n,n+1})/F_{\ba'} \hbm_*(\Sp_{n,n+1}),\] 
where $\ba'<_{des}\ba$ is the largest element smaller than $\ba$.
It follows from Proposition \ref{prop:extstats} that $\indt(w)$ is always a descent composition for $w\in \Res(n,n+1)$,
so that
$G_{\ba}DR_n=\{0\}$ unless 
$\ba=\majt(\tau)$ for some $\tau$.

\begin{lemma}
\label{lem:geofilt}    
We have the following:
\begin{enumerate}[a)]
    \item \label{item:filtbaslem} The elements 
    $ [Y_w]\in \hbm_*(\Sp_{n,n+1})$
    for $w\in \Res(n,n+1)$ and $\indt(w)\leq_{des} \ba$
    are a vector space basis of $F_\ba \hbm(\Sp_{n,n+1})$. The corresponding equivariant classes freely generate $F_\ba \hbm^U_*(\Sp_{n,n+1})$ as a
$\C[\epsilon]$-module.
\item \label{item:geoinjlem} The map $\hbm_*(\hessleq(\ba))\rightarrow \hbm_*(\Sp_{n,n+1})$ is injective, and similarly in the equivariant case.
The kernel of the map 
$F_\ba \hbm^U_*(\Sp_{n,n+1})\rightarrow F_\ba \hbm_*(\Sp_{n,n+1})$ is $\epsilon F_\ba \hbm^U_*(\Sp_{n,n+1})$, and the corresponding map on the quotient is an isomorphism.
\item \label{item:chernfilt}
Each $F_{\ba} \hbm_*(\Sp_{n,n+1})$ is preserved by the action of Chern classes.
\item  \label{item:fplem} 
In the fixed point basis, we have 
\begin{equation*}
    \label{eq:saflem}
    F_\ba \Saf = \Saf \cap \bigoplus_{w:\indt(w)\leq \ba} \C[\epsilon^{\pm 1}] w.
\end{equation*}
\end{enumerate}

\end{lemma}

\begin{proof}
For any subset $A\subset \Res(n,n+1)$
which is an interval in the Bruhat
order, $w\in A, v\leq_{bru} w\Rightarrow v \in A$, the corresponding union of 
intersected Schubert cells is 
closed and paved by affine spaces.
It follows that both equivariant and nonequivariant Borel-Moore homologies are generated by the fundamental classes
$[Y_w]$ for $w\in A$, 
see \cite{Graham01}. Since the localization map is injective \cite{brion1997torus}, we have the injectivity
of part \ref{item:geoinjlem}), and also the statement of part \ref{item:filtbaslem}). Another way to see the injectivity is
to use the long exact sequence for BM homology of \(\Sp_{n,n+1}\)
and \(\hessleq(\ba)\) and the fact that all the spaces in our construction have no odd cohomology.
The kernel of the map in that item follows from Corollary 1 of the same reference.
The statement about Chern classes in part 
\ref{item:chernfilt}) follows since the Chern classes are pulled back from
$H^*_U(\Sp_{n,n+1})$ and $H^*(\Sp_{n,n+1})$.
Part \ref{item:fplem}) follows because if
\[f=\sum_{w} b_w(\epsilon) B_w=\sum_{w} a_w(\epsilon) w\]
and $w$ is a Bruhat-maximal element for which
$b_w(\epsilon)\neq 0$, then $a_w(\epsilon)\neq 0$.
\end{proof}

We can now state our second main result, which is Theorem B from the introduction.
\begin{thm}
  \label{thmb}
   Let $F_\ba DR_n$ be the image of $F_{\ba} \hbm_*(\Sp_{n,n+1})$ under the isomorphism $DR_n\cong \hbm_*(\Sp_{n,n+1})$
  from Theorem \ref{thma}, and let $G_\ba DR_n$ be the corresponding subquotient.
Then the following statements hold.
  \begin{enumerate}[a)]
  \item \label{item:thmfilt}
  We have that 
  \begin{equation}
      \label{eq:descfilt}
      F_\ba DR_n=\sum_{\ba' \leq_{des} \ba} \C[\bx] \by^{\ba'}\subset DR_n
  \end{equation}
  \item \label{item:thmbasis} If $\ba=\majt(\tau)$ for some $\tau\in S_n$, then the monomials
  \begin{equation}
\label{eq:basismons}      
\left\{y_1^{a_1}\cdots y_n^{a_n} x_{\tau_1}^{k_1}\cdots x_{\tau_n}^{k_n}:\bk \in \scheds(\tau)\right\}
  \end{equation}
  are a vector space basis of the quotient $G_\ba DR_n$. Otherwise, $G_\ba DR_n$ is the zero vector space.
      \item \label{item:thmquot} 
As a $\C[\bx]$-module, the quotient $G_{\ba} DR_n$ 
for $\ba=\majt(\tau)$
is isomorphic to the principal ideal $(f_\tau(\bx))\subset R_n(\bx)$, where
        \begin{equation}
        \label{eq:ftau}
f_\tau(\bx)=x_{\tau_1}\cdots x_{\tau_{n-l}}           \prod_{i=1}^n \prod_{j=i+\sched_i(\tau)+1}^{n} (x_{\tau_i}-x_{\tau_j}),
        \end{equation}
     and $l$ is the length of the final run of 
     $\tau$.
    \end{enumerate}   
\end{thm}
As a corollary, we have a basis of the anti-invariants
of $DR_n$ under a Young subgroup, and therefore an independent proof of the Shuffle Theorem.
 Recall that $\antisym_\mu$ is the anti-symmetrization operator with respect to a Young subgroup, given by
\begin{equation}
\label{eq:antisym}    
\antisym_\mu f(\bx,\by)=\sum_{\sigma \in S_\mu} \signrep(\sigma) f(\bx_\sigma,\by_\sigma)
\end{equation}
\begin{cor}  \label{cor:shf}
  For any composition $\mu$, 
the antisymmetrized monomials
  \begin{equation}
  \label{eq:symcor}
      \{\antisym_\mu \by^{\ba} \bx^{\bk}:(\ba,\bk)\in
      \schedpfl^{>}_{\mu}(n)\}
  \end{equation}
  are a basis of 
  $(DR_n \otimes \signrep)^{S_{\mu}}$. In particular, we have a new proof of the ``schedules'' version of the Shuffle Theorem, which is Theorem $1'$.
\end{cor}

In particular, 
to recover the Frobenius character of $DR_n$, we would take
\begin{equation}
\label{eq:cortofrob}    
(\omega \frob DR_n)\big|_{m_\mu}=
\sum_{(\ba,\bk)\in \rschedpfl{\mu}(n)} t^{|\ba|} q^{|\bk|}.
\end{equation}

\begin{proof}
 
First, Proposition \ref{prop:schedswitch} shows that
$\schedpfl(n)$ is closed under diagonally sorting adjacent entries with respect to an ordering
on pairs $(a_i,k_i)$,
in which the elements of 
$\schedpfl_{\mu}^{>}(n)$ are minimal for transpositions in $S_\mu$.
Thus if $f=\antisym_\mu \by^{\ba} \bx^{\bk}$
for $(\ba,\bk)\in \schedpfl(n)$, then either 
$f =0$, or 
$f=\pm \antisym_{\mu} \by^{\ba} \bx^{\bk}$
for some $(\ba,\bk)\in \schedpfl^{>}_{\mu}(n)$.
It follows that the elements in \eqref{eq:symcor} span
$(DR_n \otimes \signrep)^{S_\mu}$.
Then, using only the \emph{ungraded} Shuffle Theorem (Proposition 3.13 in \cite{Haiman02} at $q=1$, which computes $\frob DR_n$ as a sum of parking functions), we find that the dimensions agree, so that the set must also be linearly independent.
    
\end{proof}

The case of \(\mu=(n)\) in the above corollary is of special importance since the \(q,t\)-graded character of \((DR_n\otimes\signrep)^{S_n}\)
is a \(q,t\)-Catalan number. Indeed for a Dyck path \(\pi\), there is a unique permutation \(\sigma(\pi)\in S_n\) such that
\(P_\pi=(\pi,\sigma(\pi))\) is a parking function  and \(\mathrm{word}(P_\pi)=(n,n-1,\dots,1)\). Thus
for any Dyck path \(\pi\) we can define \(\area_i(\pi)=\area_i(\pi,\sigma(\pi))\),
\(\dinv_i(\pi)=\dinv_i(\pi,\sigma(\pi))\).
Thus antisymmetric functions:
\[f_\pi=     \antisym_{(n)} \big(
    \mathbf{y}^{\mathbf{area}(\pi)}_{\sigma(\pi)}
    \mathbf{x}^{\mathbf{dinv}(\pi)}\big)\]
as \(\pi\) ranges through the set of \(n,n+1\) Dyck paths,
form a basis of \((DR_n\otimes\signrep)^{S_n}\). 

Let \(A=(\CC[\mathbf{x},\mathbf{y}]\otimes \signrep)^{S_n}\)  be
the space of alternating polynomials. This space has
a basis that consists of non-zero sums \(\antisym_{(n)}\mathbf{x}^{\mathbf{v}}\mathbf{y}^{\mathbf{u}}\)
for \(v_1\ge v_2\ge \dots\ge v_n\). The functions \(f_\pi\)
from the above are of this form.
In Haiman's construction of the celebrated isospectral Hilbert scheme \cite{Haiman02} he uses the ideal \(J\) that is generated by
the elements of \(A\). Since it is shown that in \cite{Haiman02} that \(J/(\mathbf{x},\mathbf{y})J=(DR_n\otimes \signrep)^{S_n}\) we
obtain:
\begin{cor}\label{cor:min-gen} The set of alternating functions \(f_\pi\) as \(\pi\)
  ranges through the set of \((n,n+1)\) Dyck paths is a set of minimal
  generators for the ideal \(J\).
\end{cor}

\subsection{Proof of Theorem \ref{thmb}}

We begin with some lemmas. Recall the elements $\tilde{y}_i=\tilde{z}_i-1 : \Saf \rightarrow \Saf$, which descend to $y_i$ under 
the isomorphism $\Saf/\epsilon \Saf \cong DR_n$, as well as the
element $\tilde{\Delta}_n \in \Saf$ 
from Section \ref{sec:springact}.

\begin{lemma}\label{zinfalem}
  Let $\ba=\majt(\tau)$. 
  Then we have
\begin{equation}
  \label{zatrieq}
\tilde{\by}^{\ba}\tilde{\Delta}_n
  =
  \sum_{w\in \Res(n,n+1)} a_w(\epsilon)w \in \Saf,
  \end{equation}
  where $a_w(\epsilon)=0$ unless $\indt(w) \leq_{des} \ba$, and
  $a_w(\epsilon)\neq 0$ for all 
  $w \in \Res(\tau)$.
In particular, $\by^{\ba}$ defines
a nonzero element of $G_{\ba} \hbm_*(\Sp_{n,n+1})$.
\end{lemma}

\begin{proof}

First, since the descent order is compatible with 
the product order on integer vectors, 
we have that $\tilde{\bz}^{\ba}$ is a linear combination of terms $\tilde{\by}^{\ba'}$ with
$\ba'\leq_{des} \ba$ with leading coefficient equal to one. It therefore suffices to prove the lemma 
with $\tilde{\bz}^{\ba} \tilde{\Delta}_n$ in place of $\tilde{\by}^{\ba} \tilde{\Delta}_n$.

Let us write the expansion
\begin{equation}
\label{eq:zaexpansion}
\tilde{\bz}^{\ba} \tilde{\Delta}_n=\sum_{w\in \Res(n,n+1)} 
b_w(\epsilon) w.
\end{equation}
We have the following rules determining 
$\tilde{\bz}^{\ba}\tilde{\Delta}_n$,
supposing $\ba$ is a descent composition:
\begin{enumerate}
\item \label{item:baserec} If $\ba=(0,...,0)$, then $\tilde{\bz}^{\ba}\tilde{\Delta}_n=\tilde{\Delta}_n$.
    \item \label{item:switchrec} If $a_i>a_{i+1}$, then $\tilde{\bz}^{\ba}\tilde{\Delta}_n=-(\tilde{\bz}^{\ba'} \tilde{\Delta}_n)*_{n+1} s_i $, where $\ba'=\ba \cdot s_i$.
    \item \label{item:rotrec} If $a_n>0$, then $\tilde{\bz}^{\ba}\tilde{\Delta}_n=
    \Ad_{\rho^{-1}}(\tilde{\bz}^{\ba'}\tilde{\Delta}_n)$,
    where $\ba'=(a_n-1,a_1,...,a_{n-1})$.
\end{enumerate}
In each case, we have that if $\ba$ is a descent composition, then $\ba'$ is a descent composition. We can use these rules to recursively determine $\tilde{\bz}^{\ba}\tilde{\Delta}_n$ for any descent composition $\ba$.

We proceed by induction on $\ba$, using the relations. In the base case
from part (\ref{item:baserec}), 
we have that $\Res(\tau)=S_n$ and 
$\tilde{\bz}^{\ba}\tilde{\Delta}_n$, so the claim follows from \eqref{eq:deltatilde}.

Otherwise, we must be in the case of item \ref{item:switchrec}) or \ref{item:rotrec}),
or both. Suppose first we have that
$a_i>a_{i+1}$ for some $i$, and let $\ba'=\ba\cdot s_i$ as in item \ref{item:switchrec}). Then $\ba'$ is always a descent composition, so that
$\ba'=\majt(\tau')$ for some $\tau'$.
We next claim that
\begin{equation}
\label{eq:combswitch}    
\{w' s_i: w'\in \Res(\tau')\}\cap
\Res(n,n+1)=\Res(\tau).
\end{equation}
To see this, since
$\indt(ws_i)=\indt(w)\cdot s_i$, we have that the left side is contained in the right.
The reverse follows from the statement that
if $w\in \Res(n,m)$ and $\inv(ws_i)\leq \inv(w)$, then $ws_i\in \Res(n,m)$.

Next, since we already know that
the nonzero coefficients $b_w(\epsilon)$
from \eqref{eq:zaexpansion} occur
for $w\in \Res(n,n+1)$, 
they must all be in $\Res(\tau)$.
The statement that the coefficients are nonzero follows from \eqref{spactfawg} and
\eqref{wsinonzero} for $m=n+1$.
The case of item \ref{item:rotrec})
can be proved similarly.

We now have that $\tilde{\by}^\ba \tilde{\Delta}_n$ is an element of
$F_\ba \Saf$ using part \ref{item:fplem})
of Lemma \ref{lem:geofilt}.
The final statement follows since 
$\tilde{\by}^{\ba} \tilde{\Delta}_n$
is nonzero in $G_\ba \Saf$, and
maps to $\by^\ba \Delta_n\in \hbm_*(\Sp_{n,n+1})$.

\end{proof}

We will see in the next two lemmas that
$G_\ba \Saf$ is generated by applying Chern operators $\tilde{x}_i$ to $\tilde{y}^{\ba} \tilde{\Delta}_n$. By Lemma \ref{zinfalem} and Lemma \ref{lem:geofilt} item \ref{item:chernfilt}),
we can make the following definition,
which is an affine version of
Definition \ref{def:chisets}.

\begin{defn}
Let $\ba=\majt(\tau)$. We define a map
$\tilde{\chi}_\tau:\C[\bx,\epsilon]
\rightarrow F_\ba\Saf$ 
of $\C[\bx,\epsilon]$ modules by
\[g(\bx,\epsilon)\mapsto
g(\tilde{\bx}_\tau,\epsilon) \tilde{\by}^{\ba} \tilde{\Delta}_n\]
We let $\bar{\chi}_\tau(\bx,\epsilon)$ be the composition of $\tilde{\chi}_\tau$ with the projection $F_\ba \Saf\rightarrow G_\ba \Saf$.
We denote the kernel of $\bar{\chi}_\tau$ by
$\bar{I}_\tau\subset \C[\bx,\epsilon]$.
\end{defn}

\begin{lemma}
A polynomial $g(\bx,\epsilon)$ is in $\bar{I}_\tau$
if and only if
\begin{equation}
    \label{eq:itaurels}
g(c_1(w\tau^{-1})\epsilon,...,
c_n(w\tau^{-1})\epsilon,\epsilon)=0,
\end{equation}    
for all $w\in \Res(\tau)$,
where $c_i(w)=c(w_i)$ are the coefficients 
in \eqref{zatrieq}.
\end{lemma}

\begin{proof}
In the fixed point basis, we have that
\[\tilde{\chi}_\tau(g(\bx,\epsilon))
=\sum_{w} a_w(\epsilon) g(c_1(w\tau^{-1})\epsilon,...,c_n(w\tau^{-1})\epsilon,\epsilon)w,\]
where $a_w(\epsilon)$ 
are the coefficients in \eqref{zatrieq}.
By Lemma \ref{lem:geofilt} item
\ref{item:fplem}), we find that an element
\[\sum_{w} b_w(\epsilon) w \in F_\ba \Saf\]
maps to zero in $G_\ba \Saf$ if and only if $b_w(\epsilon)=0$ for all $w\in\Res(\tau)$.
The lemma follows from the fact that $a_w(\epsilon)\neq 0$
for all $w\in \Res(\tau)$ by Lemma \ref{zinfalem}.

\end{proof}

\begin{lemma}
\label{lem:equivindep}
Let $\ba=\majt(\tau)$.
Then the elements 
$\tilde{\bx}_\tau^{\bk} \tilde{\by}^{\ba} \tilde{\Delta}_n$
determine a $\C[\epsilon]$-basis of 
$G_\ba \Saf$ for $\bk\in \scheds(\tau)$.
\end{lemma}

\begin{proof}

We first show that
the $\bx^{\bk}$ are linearly independent over 
$\C[\epsilon]$ in $\C[\bx,\epsilon]/\bar{I}_\tau\cong G_\ba \Saf$.
Recall the bijection between 
$\Res(\tau)$ and $\hesscomb(\tau)$
from Proposition \ref{prop:hessbij}.
Using \eqref{eq:reshess} and \eqref{ydualeq},
we find that $\bar{I}_\tau$ is equal to the image of $I_{\hesscomb(\tau)}$
under the invertible 
linear change of variables
\[x_i\mapsto (d_i\epsilon -x_i)/n,\ \ 
d_i=\frac{n-1+na_i-\maj(\tau)}{2n},\]
arising from the discrepancy 
between the restriction of the action of $U$ to 
$\gFl\subset \Fl$ and that of $\C^*$.
Independence now follows from Proposition
\ref{prop:schedindep}.

We now check that they are a basis by showing that the $\C[\epsilon]$-module $\langle \tilde{\bx}_\tau^{\bk} \tilde{\by}^{\ba} \tilde{\Delta}_n\rangle\subset G_\ba \Saf$ spanned by those vectors has the same Hilbert series as $G_\ba \Saf$.
By the previous paragraph, we have that
$\langle \tilde{\bx}_\tau^{\bk} \tilde{\by}^{\ba} \tilde{\Delta}_n\rangle$ is free 
with the generators being a basis. For the second,
Proposition \ref{prop:spbasis} and
Lemma \ref{lem:geofilt} item
(\ref{item:filtbaslem}) imply that the elements
$B_w$ 
for $w\in \Res(\tau)$
determine a $\C[\epsilon]$ basis of $G_\ba \Saf$, with
item \ref{cvpropdeg}) of
Proposition \ref{prop:spbasis} 
giving the degrees of those elements. The statement now follows from the identity
\begin{equation}
\label{eq:grdimsaf}
\sum_{w\in \Res(\tau)}   
q^{-\dim_{n+1}(w)}=
q^{-n(n-1)/2}
\sum_{\bk \in \scheds(\tau)} q^{|\bk|},
\end{equation}
which is due to 
Proposition \ref{prop:hessbij}, which equates the 
graded count of each basis.

\end{proof}

\begin{cor}
  \label{cor:gabasis}
Let $\ba=\majt(\tau)$.
Then the elements 
$\bx_\tau^{\bk} \by^{\ba}\Delta_n$ 
for $\bk \in \scheds(\tau)$
define a $\C$-basis
of the quotient module $G_\ba \hbm_*(\Sp_{n,n+1})$.
\end{cor}

\begin{proof}

By Lemma \ref{lem:geofilt} items
\ref{item:filtbaslem}) and 
\ref{item:geoinjlem}), we have that
$G_\ba \Saf$ is free over $\C[\epsilon]$, and that $G_\ba \hbm_*(\Sp_{n,n+1}) \cong G_\ba \Saf \otimes_{\C[\epsilon]}\C= G_\ba \Saf/\epsilon G_\ba \Saf$.
The statement now follows since the $x_i,y_i$ are induced by
$\tilde{x}_i,\tilde{y}_i$ from Definition \ref{def:xyaction}


\end{proof}

We can now prove Theorem \ref{thmb}.

\begin{proof}
Let $F'_\ba DR_n$ be the filtration by the descent order defined on the right hand side of \eqref{eq:descfilt}, and similarly for the subquotient $G'_\ba DR_n$. 
We prove that
$F_\ba DR_n=F'_\ba DR_n$
inductively with respect to the descent order on $\ba$, assuming that the two filtrations are equal for all $\ba'<_{des} \ba$. 

For the base case $\ba=(0,...,0)$,
we have that $F_\ba DR_n=F'_\ba DR_n=R_n(\bx)$
using the definition of $DR_n$, 
item \ref{item:classsp}) of Proposition \ref{prop:spbasis},
and the fact that
$\Res(\tau)=S_n$ for $\tau$ the identity permutation.

Now assume inductively that $F'_{\ba'} DR_n=F_{\ba'} DR_n$ for $\ba'<_{des} \ba$ for some
$\ba$. By Lemma \ref{zinfalem},
we have that $\by^{\ba} \in F_{\ba} DR_n$,
and therefore $F'_\ba DR_n \subset F
_\ba DR_n$
by Lemma \ref{lem:geofilt} part \ref{item:chernfilt}). Putting these two together, we have an inclusion $G'_\ba DR_n \subset G_\ba DR_n$, and it suffices to show that it is an equality.

In the case where $\ba$ is not a descent composition, we have that
$G_\ba DR_n=\{0\}$ since $\indt(w)$ is always a descent composition for $w\in \Res(n,n+1)$,
and $G'_\ba DR_n=\{0\}$ by Proposition
\ref{prop:allen}.
Thus, we may assume that $\ba=\majt(\tau)$, 
for some $\tau$.
By Corollary \ref{cor:gabasis},
there is a basis of $G_\ba DR_n$
whose elements are contained in
$G'_\ba DR_n$, so the two are equal.
This argument has also proved item \ref{item:thmbasis}).

Item \ref{item:thmquot}) follows since $g_\tau(\bx)=\tilde{g}_\tau(\bx,0)\big|_{x_i=x_{\tau_i}}$, 
where
  \begin{equation}
\label{haggendef}
\tilde{g}_h(\bx,\epsilon)=
q(\bx,\epsilon)
\prod_{i=1}^{n} \prod_{j=h(i)+1}^n (x_i-x_j-\epsilon),
\end{equation}
and $q(\bx,\epsilon)$ is the polynomial appearing in the proof Proposition \ref{prop:schedindep}.
We can see that $\chi_{S_n}(\tilde{g}_\tau(\bx,\epsilon))$ vanishes at precisely the fixed points in $\hesscomb(\tau)$ under
$\chi_{S_n}$ from Definition \ref{eq:chiset} using argument similar to that proposition, and item \ref{item:hesscombb}) from Proposition \ref{prop:hesscells}. We then have that the 
ideal $(\tilde{g}_\tau(\bx,\epsilon)) M_{S_n}
\subset M_{S_n}$ is isomorphic to $M_{\Res(\tau)}$,
noting that $M_{S_n} \cong H^*_{\C^*}(\gFl)$. 
We saw that $M_{\Res(\tau)}\cong G_\ba \Saf$ is free in the proof of Lemma \ref{lem:equivindep}, so we find that the image of $(\tilde{g}_\tau(\bx,\epsilon))M_{S_n}$ in 
$M_{S_n}/(\epsilon) M_{S_n} \cong R_n(\bx)$
is $(g_\tau(\bx))$.


\end{proof}

\section{Geometry of the Hessenberg paving}

\label{sec:geohess}

In this section we provide geometric explanations for 
the algebraic arguments we used to prove our main results.
We consider the subspace
\begin{equation}
\label{eq:defhesspav}    
\hesspav(\tau)=\bigcup_{w\in \Res(\tau)} Y^\circ_w=
\hessleq(\ba)-\bigcup_{\bb <_{des} \ba} \hessleq(\bb)
\end{equation}
for $\ba=\majt(\tau)$.
We prove that $\hesspav(\tau)$ is isomorphic to an affine bundle over a base space $\hessgeo(\tau)$,
which is a smooth subvariety of the 
regular nilpotent Hessenberg variety $\hessgeo(N,h_\tau)$.
It will then follow that 
$\hbm_*(\hesspav(\tau))\cong G_\ba \hbm_*(\Sp_{n,n+1})$,
the latter being the algebraic filtration
from the previous section, and that both are isomorphic to $\hbm_*(\hessgeo(\tau))$ with a grading shift, stated in Theorem \ref{thm:geo}.

To guide reader through this section we assemble the spaces and maps in one diagram, this diagram is a geometric counterpart of the
diagram from the beginning of the Section \ref{sec:affine-perm-park}.

\[\begin{tikzcd}[column sep=1.5em]
	\Sp_{n,n+1}^{\grsup} && \Sp_{n,n+1} && \hesspav(\tau) && \\
	\\
	\bigsqcup_{\mu}\ghilbgeo_0(\C^2)^+_{F_\mu} && \bigsqcup_{\mu}\pfghilbgeo(\C^2)^+_{F^{\bullet}_\mu} && \pfghilbgeo(\C^2)^+_{F^{\bullet}_{\mu(\tau)}} && { \ZZ[t,q^{\pm 1}]} \\
	\\
	\bigsqcup_\ell\hessgeo(\ell) && \bigsqcup \hessgeo(\tau) && \hessgeo(\tau)
	\arrow[two heads, from=1-1, to=3-1, "\gext\circ\psi"]
	\arrow[two heads, from=1-3, to=1-1]
	\arrow[two heads, from=1-3, to=3-3]
	\arrow[hook', from=1-5, to=1-3]
	\arrow[two heads, from=1-5, to=3-5,"\gext\circ\psi",,"\CC^{\maj(\tau)}"']
	\arrow[from=1-5, to=3-7,"t^{\maj(\tau)}q^{\delta}P_{q^{-1}}"]
	\arrow[two heads, from=3-1, to=5-1, "\varphi"]
	\arrow[two heads, from=3-3, to=3-1]
	\arrow[two heads, from=3-3, to=5-3]
	\arrow[hook', from=3-5, to=3-3]
	\arrow[from=3-5, to=3-7]
	\arrow[two heads, from=3-5, to=5-5,"\varphi","\CC^{\delta-\maj(\tau)-\mathrm{sch}(\tau)}"']
	\arrow[two heads, from=5-3, to=5-1]
	\arrow[from=5-5, to=3-7,"t^{\maj(\tau)}P_q"']
	\arrow[hook', from=5-5, to=5-3].
\end{tikzcd}\]
In the diagram we use \(\delta=n(n+1)/2\), \(\mathrm{sch}(\tau)=\sum_i \mathrm{sch}_i(\tau)\), \(\tau\in S_n\), and
\(P_q(X)=\sum_i q^i \dim \overline{H}_{2i}(X)\) is the Poincar\'{e} polynomial for Borel-Moore homology. The spaces
\(\pfghilbgeo(\C^2)^+_{F^{\bullet}_\mu}\) are the flag versions of the spaces \(\bigsqcup_{\mu}\ghilbgeo_0(\C^2)^+_{F_\mu}\),
these spaces are defined in section~\ref{sec:par-hilbert}.

The vertical maps in the diagram are affine fibrations and we indicate the fibers on the diagram. All the spaces in the diagram
are equipped with \(\CC^*\)-action and
to relate to the
diagram at the start of Section \ref{sec:affine-perm-park} we extract the \(\CC^*\) fixed locus of the spaces in diagram and
set \(\mu={ (1,...,1)}\) in the leftmost column of the diagram from the start of Section \ref{sec:affine-perm-park}.



\subsection{The Grassmannian case}
\label{sec:grass}

We start with a version of \eqref{eq:defhesspav} in the case of the affine Grassmannian, which turns out to coincide with a 
G\"{o}ttsche's result (Proposition \ref{prop:gottsche})
in the context of the punctual Hilbert scheme. 

Let $\Sp^{\grsup}_{n,m}=\Sp^{\grsup}_\gamma \subset \Gr$ be the Grassmannian affine Springer fiber, as defined in 
\eqref{eq:defaffspgr} for 
$\gamma=\gamma_{n,m}$. 
The cells are identified with 
$m$-restricted affine permutations which are 
\emph{Grassmannian}, meaning that they are in sorted order in window notation, given by 
$\Res^{<}_{(n)}(n,m)$ using the notation of Section \ref{sec:affine-permutations}. Recall that the corresponding open Schubert cell is denoted by 
$X^{\grsup,\circ}_w$ and
$Y^{\grsup,\circ}_w=X^{\grsup,\circ}_w \cap \Sp^\grsup_{n,n+1}$. As in Section 
\ref{sec:geometryFl}, we will use a superscript of $g$ to denote the 
affine Grassmannian case.

We describe the Grassmannian version of
\eqref{eq:defhesspav}, and in particular fix $m=n+1$. Then the permutations 
$\tau$ for which the Grassmannian restricted permutations have nonempty intersection with $\Res(\tau)$ are those for which
$\majt(\tau)$ is in increasing order. There is exactly one such permutation $\tau$ for each run composition $\mu$, which can be identified by starting with the maximal length permutation
$(n,...,1)$, and sorting in increasing order
along the blocks of $\mu$. The resulting set will be denoted $\Res(\mu)$, { and we will similarly write $\maj(\mu)=\maj(\tau)$, 
and $\sched_i(\mu)=\sched_i(\tau)$}.

We define 
\begin{equation}
\hesspav(\mu)=\bigcup_{w\in \Res(\mu)}
Y^{\grsup,\circ}_w \subset \Sp^\grsup_{n,n+1}.
\end{equation}
We will prove that $\hesspav(\mu)$ is isomorphic to a vector bundle over a smooth locus in a certain
partial Hessenberg variety. 


We first
reformulate the definition of the affine Grassmannian in a way that connects 
it with the compactified Jacobian variety.
We start by introducing the following local rings:
\[\calO=\CC[[\eps^n]]\subset R=\CC[[\eps^n,\eps^m]]\subset \tilde{R}=\CC[[\eps]]\]
which is identified with \(\calO\) from Section~\ref{sec:geometryFl} 
by setting \(t=\eps^n\).
Similarly, we have $\mathcal{K}=\mathbb{C}((\eps^n)) \subset \tilde{F}$ for $\tilde{F}=\mathbb{C}((\eps))$.

By identifying $\eps^{i}$ with the basis vector $e_i$ for $0\leq i \leq n-1$, we have
\begin{equation}
\label{eq:splat}
\tilde{R}=\oplus_{i=0}^{n-1}\eps^i\calO,\ \ 
\tilde{F}=\oplus_{i=0}^{n-1}\eps^i\calK.\end{equation}
We may then describe the affine Grassmannian 
as \( \agrass \) as the space of
$\mathcal{O}$-submodules of $\tilde{F}$
with the property that
\begin{equation}
\label{eq:deflatgrass}
\codim_{\tilde{R}}\tilde{R}\cap L-\codim_{L}\tilde{R}\cap L=(m-1)(n-1)/2.
%
\end{equation}
In particular, the codimension is finite,
which implies that
$L\otimes_{\mathcal{O}} \mathcal{K}=\tilde{F}$.
and \(R\in \agrass\).
Then
\(\Sp^{\grsup}_{n,m} \subset \agrass\) is 
identified with the locus of lattices 
that are additionally preserved by multiplication by \(\eps^m\), and so is an $R$-module.
It was shown in \cite{Laumon06} that \(\Sp^{\grsup}\) 
is homeomorphic to 
the  local factor of the compactified Jacobian of the curve singularity \(x^m=y^n\).

For each \(L\in \Gr\) using this description,
let \(\Gamma(L)\subset \ZZ\) be the semi-module of lower degrees:
\[\Gamma(L)=\{\ldeg(f): 
f\in L\},\]
where $\ldeg(f)$ is degree of the lowest nonvanishing coefficient,
and let  
\(\ldeg(L)=\min(\Gamma(L))\in \Z\). 
Then $\Gamma(L)$ is a submodule 
of $\mathbb{Z}$ over the semigroup $\langle n\rangle$
which has $n$ unique generators
$(k_1,...,k_n)$, which we assume are written in increasing order so that $k_1=\ldeg(L)$. 
Let $\ba=\indt(L)$
be the composition for which 
$a_i=\lfloor (k_i-k_1)/n\rfloor$.
We similarly have the total index 
$\ind(L)=|\indt(L)|$ which satisfies
\[\ind(L)=|\ZZ_{>\ldeg(L)}\setminus \Gamma(L)|.\]
By the combinatorial Proposition~\ref{prop:extstats}
for \(L\in \hesspav(\mu)\), \(\ind(L)=\maj(\mu)\).

We now describe a map from the $\Gr$ to 
a space of submodules of a power series ring 
rather than Laurent series. Instead of the condition
from \eqref{eq:deflatgrass}, we consider 
Grassmannian varieties will have finite codimension:
given an $S$-module \(N\), let
\(\Grr^d_S(N)\) denote the collection
of \(S\)-submodules \(M\subset N\) such that
\(\dim_{\C} N/M=d\), and let 
\(\Grr_S(N)=\bigcup_d \Grr^d_S(N)\).
We will also use
\(\grquot{S}{N}{N'}\) to denote the
collection of 
\(S\)-modules \(M\) such that \(N'\subset M\subset N\).

Now suppose $L\in \affgrass$. 
Then there is a unique element 
\(v_L\in L\) such that
\begin{equation}
\label{eq:inLam}
  v_L=\eps^{\ldeg(L)}+\sum_{\gamma\in \ZZ_{>\ldeg(L)}\setminus \Gamma(L)} a_\gamma \eps^\gamma,
  \end{equation}
and every element \(v\in L\) is divisible by \(v_L\). 
Thus can  we define the \emph{class map}
\begin{equation}
    \label{eq:classmap}
    cl: \Gr \rightarrow \grquot{\mathcal{O}}{\tilde{R}}{\mathcal{O}},
\ \ cl(L)=\{v/v_L:v\in L\}.
\end{equation}
Indeed, \(L/v_L\) contains element \(1\in\tilde{F}\) and it is the element of \(L/v_L\) of the lowest degree, hence
\(\mathcal{O}\cdot \subset L/v_L\) and \(L/v_L\subset \tilde{R}\)


Restricting $cl$ to the Springer fiber \(\Sp_{n,m}^{\grsup}\)
determines a map to
$\grquot{R}{\tilde{R}}{R}\subset \grquot{\mathcal{O}}{\tilde{R}}{\mathcal{O}}$. Indeed, \(1\in L/v_L\) and
since \(L\in \Sp^{\grsup}_{n,m}\) is preserved by multiplication by elements of \(R\), \(R\cdot 1
\subset L/v_L\).
The index of
$L$ corresponds to $\majt(\mu)$ for $L\in \hesspav(\mu)$,
we have a  map:
\begin{equation}
\label{eq:clspmu}    
cl: \hesspav(\mu)\rightarrow \grquot{R}{\tilde{R}}{R}.
\end{equation}
Below we show that the fibers of this map 
are { isomorphic to} $\CC^{\maj(\mu)}$ and the map is
a continuous map on each  \(Y^{g,\circ}_w\). We also explain below a
construction of stabilization \(\Sp_{n,m}^{g,\aux}\) that leads to a stable version
of the map \eqref{eq:clspmu} that is continuous.  

\begin{prop} For \(L\in  \Sp^{\grsup}_{n,m}\) we have
  \(\ind(L)=\conduc/2-\ldeg(L),\) \(\conduc=(n-1)(m-1)\).
\end{prop}
\begin{proof}
  Let \(\hilbgeo^N_0(x^n-y^m)\) be the punctual Hilbert scheme
  \(N\) points on \(x^n-y^m\), that is the moduli space of
  ideals \(I\subset R\), such that \(\dim R/I=N\).
  Since \(\eps^{\conduc}\tilde{R}\subset R \) the multiplication by
  \(\eps^N\) yields an embedding \(i_N:\Sp^{\grsup}_{n,m}\to \hilbgeo^N_0(x^n-y^m)\)
  for sufficient large \(N\), see for example \cite{ORS}.

  Let us denote the lower order degree
  of an ideal \(I\in \hilbgeo_0^N(x^n-y^m)\) by \(\ldeg'(I)\).
  We set \(\Gamma(I)\) to be the set of the \(\eps\) degrees of element in \(I\).
  By construction we have  \(\ldeg'(i_N(L))=\ldeg(L)+N\)
  and \(\Gamma(i_N(L))=N+\Gamma(L)\). If we define
  \(\ind(I)\) as \(|\ZZ_{\ldeg'(I)}\setminus \Gamma(I)|\) then
  \(\ind(i_N(I))=\ind(I)\).

  Finally, let us observe that for any \(I\in \hilbgeo_0^N(x^n-y^m)\)
  for sufficiently large \(N\) we have \(\ind(I)+\ldeg'(I)=N-\conduc/2\). Thus the sum \(\ind(L)+\ldeg(L)\)
  is constant as we vary \(L\) in \(\Sp^{\grsup}_{n,m}\). To compute
  the constant we use \(R\in \Sp^{\grsup}_{n,m}\).
\end{proof}

The map \(cl\) sends lattices \(L\) with \(\ind(L)=d\) to
\(\mathcal{G}r_R^d(\tilde{R}/R)\). 
We can think about
an element of \(\grr_R^d(\tilde{R}/R)\)
as \(L\in \grr_{R}(\tilde{R})\) such that \(R\subset L\). Hence we have a multiplication map:
\[\mathrm{act}: \tilde{R}^\times\times \grr_R^d(\tilde{R}/R) \to 
\Sp^{\grsup}_{n,n+1},\quad (f,L)\to \eps^{\conduc/2-d}\cdot f\cdot L \]
where \(\tilde{R}^\times\) is group of units of \(\tilde{R}\). The
space \(\Sp^{\grsup}_{n,n+1}\) is paved by affine affine spaces
\(\Sp^{\grsup}(\Gamma)\) which consist of \(L\) such that \(\Gamma(L)=\Gamma\). For each \(w\in \Res_{(n)}^{<}(n,n+1)\) let 
\(\Gamma(w)\) be a semigroup with \(\Gamma_{n,n+1}\) generators
\(w_1,\dots,w_n\) then \(Y^\circ_w=\Sp^{\grsup}(\Gamma(w))\).


\begin{prop}
  The map \(cl:\Sp^{\grsup}(\Gamma)\to \agrass_{R}(\tilde{R}/R) \)
  is a smooth affine fibration with affine fibers that are
{ isomorphic to}  \(\CC^d\), \(d=|\ZZ_{>\min(\Gamma)}\setminus \Gamma|\).
\end{prop}
\begin{proof} Let \(r=\min(\Gamma)\),
  and let us define \(G(\Gamma)\subset \tilde{R}^\times\) to be
  the set of elements of the form \(1+\eps^{-r}\sum_{\gamma\in\ZZ_{>r}\setminus\Gamma}a_\gamma\eps^\gamma\),
  \(a_\gamma\in \C\). By construction \(G(\Gamma)=\C^d\).

  The map \(\mathrm{act}: G(\Gamma)\times \grr_{R}(\tilde{R}/R)\to \Sp_{n,n+1}^{\grsup}\) is a local isomorphism. The inverse 
  of this map is defined by \(L\mapsto (v_L/\eps^r,cl(L)). \)
\end{proof}

The map \(cl\) is continuous on \(\Sp^{\grsup}_{n,m}(\Gamma)\) but
it is enough for studies of BM homology of \(\Sp^{\grsup}_{n,m}\)
since \(\Sp^{\grsup}_{n,m}(\Gamma)\) have no odd-dimensional homology.
However, one could construct a stabilized version of the map
\(cl\) that is continuous over the connected components of
\(\grr_{R}(\tilde{R}/R)\). We outline the details of the stabilization
construction below. However, we are not using the stabilized version
of the map to keep notations and exposition simple.

Indeed, we can consider an auxiliary space \(\Sp^{\grsup,\aux}_{n,m}\) that consists of pairs \((L,v)\) where \(L\in \Sp^{\grsup}_{n,m}\)
and a vector \(v \in L\cap \eps^{-N}\tilde{R}\), \(N=2mn\) such that
the coefficient of \(v\) in front of \(\eps^{\ldeg{L}}\) is \(1\) and
the coefficients  in front of \(\eps^{k}\), \(k>2N\) are zero.
The projection map \(\Sp^{\grsup,\aux}_{n,m}\to \Sp^{\grsup}_{n,m}\) is
a smooth affine fibration. The desired stabilized map
\(cl:\Sp^{\grsup,\aux}_{n,m}\to \grr_{R}(\tilde{R}/R)\) is defined
by \((L,v)\mapsto L/v\).

\subsection{Duality map}
\label{sec:ring-modul-dual}

We now give a dual description of lattices in another Grassmannian space, which corresponds to the strange looking difference in the combinatorial description
of \eqref{eq:latmap}. 
The constructions in this section are 
closely related to the results in the Appendix of \cite{PandharipandeThomas10}.

Given relatively prime $(n,m)$, let
\[\conduc=(n-1)(m-1),\quad c=\eps^{\conduc}\]
The element $c$, called the \emph{conductor},
has the property that   
\(c\tilde{R}\subset R\) and it is the smallest element of
\(R\) with this property.
The quotient \(R/c\tilde{R}\) will be denoted by \(\rbar\).

Suppose \(M\) is a finitely generated \(R\)-module. Then we define the dual \(R\)-module by
\[\mathbb{D}(M)=\Ext^1_R(M,R).\]
It turns out that if  \(M\in \Gr_R(\tilde{R}/R)\) then  \(\mathbb{D}(\tilde{R}/M)\) is naturally an \(\rbar\)-submodule of \(\rbar\). To see this, we compute the
\(\mathbb{D}(\tilde{R}/R)\), since \(\tilde{R}/M\) is the quotient of
\(\tilde{R}/R\).
First, we  apply \(\Hom_R(-,R)\) to the short exact sequence:
\begin{equation}\label{eq:R/tilR} 0\to R\to \tilde{R}\to \tilde{R}/R\to 0.\end{equation}
Since \(\tilde{R}/R\) is a torsion module, we have \(\Hom_R(\tilde{R}/R,R)=0\). Thus we get:
\[0\to\Hom_R(\tilde{R},R)\xrightarrow{i} \Hom_R(R,R)\to \Ext^1(\tilde{R}/R,R)\to \Ext^1(\tilde{R},R)\to 0\]

The inclusion \(\nu:R\to \tilde{R}\) is the normalization map 
and the \(R\)-module
\(\tilde{R}\) is the push-forward: \(\tilde{R}=\nu_*(\tilde{R})\).
Thus by the
adjuction for \(\Ext^*_R\)  we have \(\Ext^1_{R}(\nu_*(\tilde{R}),R)=\nu_*(\Ext^1_{\tilde{R}}(\tilde{R},\tilde{R}))=0\)
since \(\nu^*(R)=\tilde{R}\).

The same adjunction argument implies \(\Hom_R(\tilde{R},R)=\tilde{R}\) and the image of the inclusion \(i\) is
\(c\tilde{R}\subset R\). Indeed, the element \(\phi\in \Hom_R(\tilde{R},R)\)
is uniquely defined by \(\phi(1)\in R\), since \(\phi(1)=0\) implies that
 \(\phi\in \Hom(\tilde{R}/R,R)=0\). Moreover, the set \(\deg_\eps(\phi(\tilde{R}))\)
is equal to \(\mathbb{Z}_{\ge d}\), \(d=\deg_\eps(\phi(1))\). Indeed, for any \(x\in \tilde{R}\)
there are \(z,z'\in R\) such that \(0=z\phi(x)-z'\phi(1)=
\phi(zx-z')\). Since, \(r=\tilde{R}\) is torsion free \(R\)-module, \(\phi\) is injective  and \(\deg_\eps(\phi(x))=
d+\deg_\eps(z')-\deg_\eps(z)=d+\deg(x).\)
Thus we conclude \(\Ext^1_R(\tilde{R}/R,R)\simeq \rbar\) as \(R\)-module.

 Finally,
 let us identify \(\CC^*\)-equivariant structure on  \(\Ext^1(\tilde{R},R)\). The \(\CC^*\)-action on
 \(\tilde{R}\) is defining \(\CC^*\)-weight of \(\eps\) to be \(1\).
Thus the \(R\) and \(\tilde{R}/R\) acquire \(\CC^*\)-equivariant structure as well.

 We have the equality  of the virtual \(\CC^*\)-representations:
 \([\Ext^1_R(\tilde{R}/R,R)]=[(\tilde{R}/R)^\vee].\)
 Here and everywhere below we use \(M^\vee\) for the dual \(\CC^*\) module.
 On the other hand \([(\tilde{R}/R)^\vee]=[\rbar\{1-\conduc\}]\), where \(M\{k\}=M\otimes \chi^k\) with \(\chi\simeq \CC\) being the
 tautological \(\CC^*\)-representation. The last equality follows from the combinatorial identity for subsets of \(\Z\)
 \[-\Z_{\ge 0}\setminus \Gamma_{n,m}=\Gamma_{n,m}\setminus \Z_{\ge\conduc}-\conduc+1.\]
 Finally, we arrive at
 \begin{equation}\label{eq:wt-sht-1}
 \Ext^1_R(\tilde{R}/R,R)\simeq \rbar\{1-\conduc\}.\end{equation}

Next we apply \(\Hom_R(-,R)\) to the short exact sequence:
\begin{equation}\label{eq:sesMM}
  0\to M/R\to \tilde{R}/R\to \tilde{R}/M\to 0.\end{equation}

The modules in the sequence are \(R\)-torsion hence we obtain the short
exact sequence:
\[0\to \Ext^1(\tilde{R}/M,R)\to \Ext^1(\tilde{R}/R,R)\to \Ext^1(M/R,R). \]

Let us denote the map on the moduli space of \(R\)-modules that sends \(M\) a submodule of \(\tilde{R}\) to the quotient
module \(\tilde{R}/M\) by \(Q\).
Then by  combining the previous constructions with the involutive properties of the the duality functor we obtain:
\begin{prop}\label{prop:dq1}
  The  map \(\mathbb{D}\circ Q\) yields an isomorphism
  \[\mathbb{D}\circ Q:\Grr_R(\tilde{R}/R)\to \Grr_R(\rbar)\]
  \end{prop}
  \begin{proof}
    By applying \(\Hom(-,R)\) to the short exact sequence of  \(R\)-modules
    \begin{equation}\label{eq:sesMR}
      0\to M\xrightarrow{\varphi} \tilde{R}\to \tilde{R}/M\to 0,\end{equation}
    we get a short exact sequence:
    \[0\to c\tilde{R}\simeq \Hom_R(\tilde{R},R) \xrightarrow{\varphi^\vee}\Hom(M,R)\to \Ext^1(\tilde{R}/M,R)\to 0,\]
    and \(\Ext^1(M,R)=0\).

    The map \(\varphi\) is the natural inclusion map, that could be seen  by applying \(\Hom_R(-,R)\) to the diagram of maps
    \[0\to R\xrightarrow{i_M} M\to M/R\to 0,\]
    we get an injective map \(i^\vee_M:\Hom_R(M,R) \to\Hom_R(R,R)=R\). In particular, if \(M=\tilde{R}\) then by the discussion above
    we get \(\Hom_R(\tilde{R},R)=\tilde{R}\) and the map \(i^\vee_{\tilde{R}}\) is the inclusion of \(c\tilde{R}\) inside \(R\). Finally, we observe that
    \(\varphi\circ i_M=i_{\tilde{R}}\) as thus \(i^\vee_M\circ \varphi^\vee=i_{\tilde{R}}^\vee\).

    Now we see that \(\Ext^1_R(\tilde{R}/M,R)=\Hom_R(M,R)/c\tilde{R}\).
    The curve \(\mathrm{Spec}(R)\) is Gorenstein hence the duality functor \(\Hom_R(-,R)\) is involutive on the
    derived category of \(R\)-modules.  
    
    In more detail, let us construct the inverse \(\Phi\) to the map \(\mathbb{D}\circ Q\) by 
    observing that a module \(K\in \Grr_R(\rbar)\) yields an \(R\)-submodule \(K'\subset R\) such that \(c\tilde{R}\subset K'\) and 
    \(K'\) maps to \(K\) by the projection \(R\to \rbar\).  Thus we can apply \(\Hom_R(-,R)\) to the short exact sequence:
    \[0\to \tilde{R}\to K'\to K'/c\tilde{R}\to 0.\] 
    The results is an exact sequence:
    \[0\to \Hom_R(K',R)\xrightarrow{\psi} \Hom_R(\tilde{R},R)\to \Ext^1(K'/c\tilde{R},R)\to 0\]
     Since \(\Hom_R(\tilde{R},R)=\tilde{R}\), we obtain an element of \(\Grr_R(\tilde{R})\). Moreover, the inclusion 
     map \(K'\to R\) is sent by the map \(\psi\) to \(1\in \tilde{R}=\Hom_R(\tilde{R},R)\) thus we actually obtain an element of 
     \(\Grr_R(\tilde{R}/R)\).

     Finally, let us observe that if \(K=\Ext^1_R(\tilde{R}/M,R)\) then \(K'=\Hom_R(M,R)\). Moreover, since \(M\) is torsion free we get 
     that \(\Hom_R(K',R)=M\) and that shows \(\Phi\circ \mathbb{D}\circ Q= \mathrm{Id}\). The argument for \( \mathbb{D}\circ Q\circ \Phi= \mathrm{Id}\) is analogous.

  \end{proof}

By composing the isomorphism
\(\mathbb{D}\circ Q : \Grr_R(\tilde{R}/R)
\rightarrow \Grr_R(\rbar)\) 
from Proposition \ref{prop:dq1}
with the class map \eqref{eq:classmap},
we obtain a function
\begin{equation}
\label{eq:extdef}
\gext: \Sp^g_{n,m}\to \Grr_R(\rbar),\quad \gext(L)=\mathbb{D}(Q(cl(L)).
\end{equation}
This function is continuous when restricted
to each $\hesspav(\mu)$. We summarize this in the following proposition.
\begin{prop}
\label{prop:jacmapgr}    
We have a continuous map {of varieties} 
$\gext :
\hesspav(\mu)\rightarrow \Grr_R(\bar{R})$
whose fiber is an affine space of 
dimension
$\maj(\mu)$.
The image of one of the Schubert cells 
$Y^{\grsup,\circ}_w \subset \hesspav(\mu)$
is the attracting set of $\extcomb(w)$ for the 
compatible action of $U$ on $\Grr_R(\rbar)$,
where $\extcomb$ is the combinatorial map from Definition \eqref{def:extcomb}
restricted to \(\Res_{(n)}^<(n,n+1)\).

\end{prop}

\subsection{Map to the punctual Hilbert scheme}
\label{sec:cell-decomp-comp}


Define a homomorphism
$\evmap_{n,m}:\C[x,y]\rightarrow \tilde{R}$
which sends $f(x,y) \in \C[[x,y]]$ to
$f(\eps^n,\eps^{m})$. Then
\begin{lemma}
\label{lem:gcdker}    
The following preimages are given by
\begin{equation}
\label{eq:rbarhilb}
\evmap_{n,n+1}^{-1} ((\eps^\conduc))=\mathfrak{m}^{n-1},\ \ 
\evmap_{n,n+1}^{-1} ((\eps^{\conduc+n}))=\mathfrak{m}^n
\end{equation}
where $\conduc$ is the conductor and $\mathfrak{m}=(x,y)$ is the maximal ideal in $\C[[x,y]]$. In particular, $\evmap_{n,n+1}$ determines an isomorphism $\rbar\cong \C[[x,y]]/\mathfrak{m}^{n-1}$.
\end{lemma}
\begin{proof}
The preimage is a monomial ideal, so
the first statement amounts to checking that
\begin{equation}
\mathfrak{m}^{n-1}=\langle x^ay^b: an+b(n+1) \ge \conduc\rangle,
\end{equation}
and similarly for the second case.
\end{proof}
We then have an injective map 
$\psi:\Grr_R(\rbar)\rightarrow
\hilbgeo_0(\C^2)$ determined by sending
a submodule satisfying 
$c\tilde{R} \subset M\subset R$ 
to its preimage in $\C[[x,y]]$, recalling that $\hilbgeo_0(\C^2)$
denotes the punctual Hilbert scheme of all codimensions 
defined in Section \ref{sec:hilbcells}. This map intertwines
the action of $U$ with the one on $\hilbgeo_0(\C^2)$
from Proposition \ref{prop:gottsche}, and so carries attracting
sets to attracting sets. We can describe the image of
$\hesspav(\mu)$ in terms of \eqref{eq:grhilbpartition}.
\begin{lemma}
\label{lem:hesshilbmu}
The image of $\hesspav(\mu)$ under
$\psi \circ \extmap$
is $\ghilbgeo_0(\C^2)^+_{F_\mu}$,
where $F_\mu(u)$
is the expression from \eqref{eq:Fmu} before Proposition \ref{prop:idealcars}.
 \end{lemma}

\begin{proof}
Using Proposition \ref{prop:jacmapgr}, we find that attracting sets are mapped to attracting sets. The result then follows from 
Proposition \ref{prop:gottsche}. To complete the argument we need to compute the images under \(\psi\circ\extmap\) of the \(U\)-fixed points.
This can be done by computing the \(U\)-virtual characters of the corresponding modules since \([\Hom_R(S,R)]-[\mathbb{D}(S)]=[S]^\vee\).
On the other hand \(\Hom_R(\mathbb{D}(\tilde{R}/M))=0\) for \(M\in \Grr_R(\tilde{R}/R)\) and \([\tilde{R}/M]^\vee\) for \(U\)-fixed
modules \(M\) can be matched with \([I_\lambda]\)  for a unique \(I_\lambda\in \ghilbgeo_0(\C^2)^{U}\).
\end{proof}

We give a second description of the attracting sets
which relates them with the Hessenberg varieties of Section \ref{sec:hesscohomology}. 
Let $F^{std}_{\bullet}$
denote the standard flag in $\C^n$ 
given by
$F^{std}_i=\langle e_{1},...,e_i\rangle$, 
and let
$F^{opp}_{i}=\langle e_{n-i+1},...,e_n\rangle$ denote the opposite one. 
Let $N$ be the upper triangular
Jordan block matrix \eqref{eq:Ndef}.
Then we have
\begin{lemma}
\label{lem:grasshilb}
Let $F(z)$ and the coefficients $a_i$ 
correspond to a nonempty \(\mathcal{G}(\CC^2)_F\) in
Proposition \ref{prop:iarrobino}, 
and define $\ell=(\ell_1,...,\ell_n)$ by 
$\ell_i=a_{i-1}$ for any $n$ greater than the degree of $F(z)$.
We have an isomorphism 
\[\phi:\ghilbgeo(\CC^2)_{F}
\rightarrow \hessgeo(\ell),\] 
where $\hessgeo(\ell)$ consists
of flags $(W_1\subset \cdots \subset W_n)$ of subspaces of 
$\C^n$ satisfying
    \begin{equation}
\label{eq:hesshilbcond}        
\dim(W_i)=\ell_i,\ \ 
N W_i\subset W_{i+1},\ \ 
W_i \subset F^{opp}_i,
\end{equation}
    and $N$ is the upper triangular 
    nilpotent matrix \eqref{eq:Ndef}.
\end{lemma}

\begin{proof}
Let us identify $\gr_{n-1} \C[x,y]$ with $\C^n$
by setting $e_i=x^{n-i}y^{i-1}$.
The map
is given by sending a 
homogeneous ideal $I$ to the flag
\begin{equation}
\label{eq:hesshilbmap}    
W_i=x^{n-i} (\gr_{i-1} I) \subset 
\gr_{n-1} \C[x,y] =\C^n.
\end{equation}
The fact that $xI \subset I$ shows that this collection of vector spaces is indeed nested, while the definition of $W_i$ shows that it is contained in $F_i^{opp}$.
The Hessenberg type condition
$N W_i\subset W_{i+1}$ is satisfied because $yI\subset I$. The inverse function is straightforward to describe, and it is clear that these are maps of varieties.

\end{proof}

In the previous lemma we assume that \(\ell_i\ge 0\) and \(\ell_{i+1}\ge \ell_i\). Thus if \(\ell_i=\ell_{i+1}\) for some \(i\)
then \(\hessgeo(\ell)\) is not a subvariety of a partial flag variety in the form it is presented in the lemma.
On other hand
we can realize $\hessgeo(\ell)$ as a 
subvariety of the partial Hessenberg variety
$\hessgeo_\mu(N,h_\mu)$, where $h_\mu$ is the 
partial version of the Peterson Hessenberg function
defined in Section \ref{sec:hesscohomology}. 
\begin{prop}
\label{prop:gottschehess}    
We have that $\hesspav(\mu)$ is isomorphic to 
a locally trivial affine bundle
over a smooth base, described by
\[\hessgeo(\ell)=\hessgeo_\mu(N,h_\mu)\cap \mathcal{Z}(\mu)\]
where $h_\mu$ is the parabolic Peterson Hessenberg function, and
\begin{equation}
\mathcal{Z}(\mu)=
\left\{(V_{d_1}\subset \cdots \subset V_{d_k})\in \gFl_\mu:
V_{d_i} \subset F^{opp}_i
\right\}
\end{equation}
for $d_i=\mu_1+\cdots+\mu_i$ and \(\ell_i=a_{i-1}\) are the coefficients of the expansion of
\(F_\mu\) defined by \eqref{eq:Fmu}.
The rank is equal to 
$n(n+1)/2-\sum_{i=1}^n \sched_i(\mu)$.
\end{prop}

\begin{proof}
{
{
By Propositions \ref{prop:iarrobino} and
 \ref{prop:jacmapgr} and Lemma 
\ref{lem:hesshilbmu},
we find that $\hesspav(\mu)$ is a fibration over
$\ghilbgeo_0(\C^2)^+_{F_\mu}$, which is isomorphic to 
the given base by Lemma \ref{lem:grasshilb}.}
   The rank of the bundle can be computed
    by checking that the desired formula
agrees with the sum of
$\maj(\mu)$ and the expression
for the rank in item \ref{item:iarrorank}
of Proposition \ref{prop:iarrobino}.}

\end{proof}

The cell decomposition from Proposition
\ref{prop:gottsche} agrees with the intersection of the parabolic Schubert cells with $\hessgeo(\ell)$, as both are the Bialynicki-Berula decompositions coming from the same torus action.

\begin{ex}
Let $F(z)={z}^{3}+{z}^{2}+2z+1$
and $n=5$, so that $\ell=(0,0,2,3,5)$, $\mu=(2,1,2)$,
and $h=h_\mu=(3,3,5,5,5)$.
There are two nontrivial subspaces $W_3,W_4$
in Lemma \ref{lem:grasshilb} of dimensions 2 and 3.
We have that
\begin{equation}
\label{eq:hessellsched}    
\hessgeo(\ell)=\hessgeo_\mu(N,h) \cap \fltaugeo(\mu)
\end{equation}
where $\fltaugeo(\mu)\subset G/P_\mu$
is the partial variety of flags $(V_2 \subset V_3)$ in $\mathbb{C}^5$ satisfying $V_2\subset F^{opp}_{3}=\langle e_3,e_4,e_5\rangle$, and 
$V_3 \subset F^{opp}_4=\langle e_2,e_3,e_4,e_5\rangle$, identifying $V_2=W_3$, and $V_3=W_4$.

We use this to describe a cell $\hessgeo(\mu)$. Let 
$\sigma=(5,4,3,2,1)$,
and consider the parabolic Schubert cell
\begin{equation}
A_{\sigma,\mu}=
 \left(\begin {array}{ccccc} a_{1,5}& a_{1,4} & a_{1,3}&0&1\\ a_{2,5} &a_{2,4}&a_{2,3}
&1&0\\a_{{3,5}}&a_{{3,4}}&1&0&0
\\0&1&0&0&0\\ 1&0&0&0&0
\end {array} \right)
    \end{equation}
    so that the span of the first two columns is $V_2$, and the first three span $V_3$.
Intersecting this cell with $\fltaugeo(\mu)$ amounts to setting $a_{1,5},a_{1,4},a_{1,3},a_{2,5},a_{2,3}$ 
equal to zero. The condition that 
$A_{\sigma,\mu}^{-1}N A_{\sigma,\mu}\in H_h$
from Definition \ref{def:hessvariety} gives the remaining equations $a_{3,4}=a_{2,3}$, and $a_{3,5}=-a_{2,3}a_{3,4}$.

In terms of the ideal, this describes a cell in
$\ghilbgeo^5(\mathbb{C}^2)_{F}$
given by substituting
$a_{3,4}=a_{2,3}=a$, and $a_{3,5}=-a^2$ into
\[(x^2+a_{3,5}y^2,xy+a_{3,4}y^2,xy^2+a_{2,3}y^3,xy^3,y^4)\]
which is the preimage under $\varphi$.
One can check that the these relations produce 
an ideal with codimension 
$\dim_{\mathbb{C}}(\mathbb{C}[x,y]/I)=5$, while 
the dimension drops for other values.
We show in Lemma \ref{lem:schedcells}
below that the full flag case of this 
Hessenberg variety may be paved by 
affines in a similar way.

\end{ex}



    

\subsection{Duality morphism in the flagged case}

\label{sec:dualASF}

We extend this construction to the flagged case, recalling the 
definition of $\hesspav(\tau)$ from \eqref{eq:defhesspav}. 
We have an alternative description of the affine flag variety $\Fl$ can as the collection of chains
$L_\bullet=(\cdots \supset L_i \supset \cdots)$ of lattices
$L_i\subset \tilde{F}$ satisfying
\begin{equation}\label{eq:chain}
\codim_{\tilde{R}}(L_i \cap \tilde{R})-\codim_{L_i}(L_i\cap \tilde{R})=i, \ \ 
L_{i+n}=\eps^n L_{i}.
\end{equation}
The affine Springer fiber $\Sp_{n,n+1}$ consists of flags which additionally satisfy $\eps^{n+1} L_i \subset L_i$,
in other words are $R$-modules for $R=\C[[\eps^n,\eps^{n+1}]]$.

We now define a flag version of the Jacobian and the $\gext$
map.
\begin{defn}
\label{def:flagjac}    
We define $\mathcal{F}l^d_R(N/N')$ 
to be the collection of 
flags of $R$-modules $(M_0\supset M_{1}\supset \cdots)$ such that
\begin{equation} 
M_i \subset N,\ \ 
\dim(N/M_{i})=i+d,\ \ 
    M_0 \supset N',\ \ 
    M_{i+n}=\eps^n M_{i}.
    \end{equation}
\end{defn}
Given a flag $L_\bullet \in \Fl$, we define a new flag
$L'_\bullet$ by
\begin{equation}
\label{eq:defclflag}
L'_{i}=\left\{v/v_{L_0}:v\in L_{i}\right\},
\end{equation}
where $v_L$ is the minimal element from \eqref{eq:classmap},
so that $L'_0=cl(L_0)$.
Then the flagged version of the $\gext$ map is given by
\begin{equation}
\gext: \Sp_{n,n+1}\rightarrow \mathcal{F}l^d_R(R/c\tilde{R}),\ \ 
\gext(L_\bullet)=M_\bullet,\ \ 
M_{i}=\gext(L'_{-i})
\end{equation}
where $d=\comaj(\tau)$.

We have an action of $U=\C^*$ on $\mathcal{F}l(R/c\tilde{R})$ by sending $z\cdot \eps^j= z^j\eps^j$.
Then the following statement is an analogue
of Proposition~\ref{prop:jacmapgr} in the flag setting:
\begin{prop}
\label{prop:extmapflag}
  The map $\gext$ yields a 
  map of varieties
%
 $\hesspav(\tau)\rightarrow 
 \mathcal{F}l^d_R(\tilde{R}/cR)$
 where $d=\comaj(\tau)$.
 The fiber is an affine space of dimension $\maj(\tau)$, 
 and the image of $Y^{\circ}_{w}$ for $w\in \Res(\tau)$ is the attracting set for the action of $U$.
\end{prop}

\begin{ex}
Let $w=(-1,4,3)$, which is one of two elements in 
$\Res(\tau)=\{(-1,3,4),(-1,4,3)\}$ 
for $\tau=(2,3,1)$. The corresponding 
three-dimensional cell 
in $\Sp_{3,4}$ is parametrized by
\begin{multline}\cdots \subset  (\eps^{-2}+a_{2,3} \eps^{-1}+a_{2,4},
\eps^{2},\eps^{3}) \subset\\
(\eps^{-2}+a_{2,4}-a_{2,3}a_{3,4},\eps^{-1}+a_{3,4},\eps^3) \subset (\eps^{-2},\eps^{-1},1) \subset \cdots 
\end{multline}
Here each entry is described by its generators over $\mathcal{O}=\C[[\eps^3]]$, with the first one corresponding to $L_0$. 
We have that $v_{L_0}=\eps^{-2}+a_{2,3} \eps^{-1}+a_{2,4}$,
so the corresponding sequence $L'_\bullet$ is given by
 \begin{multline}
\cdots \supset (1,\eps^4,\eps^5)\subset (1,\eps+(a_{3,4}-a_{2,3})\eps^2,\eps^5)\subset (1,\eps,\eps^2)\subset \cdots
 \end{multline}
 Notice that there is one degree of freedom, so that the rank
 of the fiber is given by $\maj(\tau)=\dim(Y^\circ_w)-1=2$.
 The image under the $\gext$ map is then
 \begin{multline}
(\eps^3,\eps^4,\eps^8)\supset 
(\eps^3+(a_{2,3}-a_{3,4})\eps^4,\eps^7,\eps^8) \\
\supset
(\eps^6,\eps^7,\eps^8)
\supset \cdots \in \mathcal{F}l^1_R(R/c\tilde{R})
 \end{multline}
 where the sequence begins at $M_0$,
whose codimension in $R$ is $\comaj(\tau)=1$.
\end{ex}

\begin{proof}
  The argument is in line with the proof of Proposition~\ref{prop:dq1}. Indeed, the analogue of  \eqref{eq:R/tilR}
  is the sequence:
  \[0\to R\to \eps^{-n}\tilde{R}\to \eps^{-n}\tilde{R}/R\to 0.\]
  By applying \(\Hom_R(-,R)\) to  sequence and arguing the same way as above we obtain \(\Ext^1_R(\eps^{-n}\tilde{R},R)=0\) and
  \begin{equation}\label{eq:wt-sht-2}
  \Ext^1_R(\eps^{-n} \tilde{R}/R,R)\{\conduc+n-1\}\simeq R/\eps^nc\tilde{R}\simeq \rbar'.\end{equation}

  The inclusion map of \(\Ext^1_R(\Lambda_{-i},R)\) inside \(\rbar'\) is constructed from the
  result of application of \(\Hom_R(-,R)\)  to the analogue of \eqref{eq:sesMM}:
  \[0\to M/R\to \eps^{-n}\tilde{R}/M\to \eps^{-n}\tilde{R}/R\to 0.\]

  Thus we have shown that \(M_{-i}=\mathbb{D}(\eps^{-n}\tilde{R}/L_{-i})\) has a natural inclusion inside \(\rbar'\). The inclusions between \(\Lambda_\bullet\)
  induce the \(R\)-morphisms between the modules \(M_{-i}\) and these morphisms satisfy the defining  conditions for the space  \(\mathcal{F}l(R,\rbar'\to\rbar)\).

  Finally, for showing that \(\mathbb{D}\circ Q'\) is an isomorphism we need an  analogue of the sequence~\eqref{eq:sesMR}:
  \[0\to M\to \eps^{-n}\tilde{R}\to \eps^{-n}\tilde{R}/M\to 0.\]
  Just as in the proof of Proposition~\ref{prop:dq1}, we can apply \(\Hom_R(-,R)\)  to the above sequence to 
  prove that \(\Ext^1_R(\eps^{-n}\tilde{R}/M,R)=\Hom_R(M,R)/c\tilde{R}\). 
  The involutive property of the duality
  implies the desired statement because \(\Ext^1_R(M,R)=0\).

\end{proof}

\subsection{The Parabolic flag Hilbert scheme}
\label{sec:par-hilbert}
We now extend the map $\psi$ to the punctual 
Hilbert scheme. To do this, we first recall a flagged version of $\hilbgeo(\C^2)$, introduced in
\cite{carlsson2017parabolic}:
\begin{defn}
\label{def:pfhilb}
The \emph{parabolic flag Hilbert scheme} is the variety of flags
\begin{equation}
\label{eq:pfhilb}
\pfhilbgeo^{d,n}(\C^2)=\left\{(I_0 \supset \cdots \supset I_{n}):
I_j \in \hilbgeo^j(\C^2),\ I_{n}\supset xI_0\right\}
\end{equation}
\end{defn}
\begin{prop}[\cite{carlsson2017parabolic}, Theorem 4.1.6]
\label{prop:pfhsmooth}
    The parabolic flag Hilbert scheme 
    $\pfhilbgeo^{d,n}(\C^2)$
    is a smooth quasi-projective variety of dimension $2d+n$.
\end{prop}

\begin{proof}
    We have a map $\pfhilbgeo^{d,n}\hookrightarrow \hilbgeo^{N}(\C^2)$
    for $N=(n+1)d+n(n+1)/2$ defined as follows. For each flag of ideals, let
    \begin{equation}J=\sigma^*(I_{0})+x
    \label{eq:blendedieal}
    \sigma^*(I_{1})+\cdots +x^n \sigma^*(I_{n})
    \end{equation}
    where $\sigma^*(I)$ is the result of substituting $x=x^{n+1}$. The smoothness is checked by showing that the image is in the fixed point locus of of the action of $\mathbb{Z}/(n+1)\mathbb{Z}$ on $\hilbgeo^N(\C^2)$ which multiplies $x$ by the $(n+1)$-th root of unity. 
\end{proof}

We have an action of $T=\C^*\times \C^*$ 
on $\pfhilbgeo$ by acting on each factor $I_j$ as in Section \ref{sec:hilbcells}. 
Its fixed points are given by $I_{\parflag}=(I_{\lambda^0}\supset \cdots 
\supset I_{\lambda^{n}})$ for 
$\parflag\in\pfhilbcomb^d(n)$. The image 
$J$ of $I_{\parflag}$ in $\hilbgeo^N(\C^2)$ in the proof is the monomial ideal corresponding to  \emph{blended partition} 
$\blendlam=\blendpar(\parflag)$ defined so that the number of boxes with $x$-coordinate equal to $k$ in 
$\blendlam$
is the number of boxes with $x$-coordinate equal to $i$ 
in $\lambda^j$, where $k=i(n+1)+j$ so that 
$j\in \{0,...,n\}$ is the remainder.

Let $\ppfhilbgeo(\C^2)$ denote the punctual subspace
of $\pfhilbgeo(\C^2)$
for which $I_j \in \philbgeo(\C^2)$ for each $j$.
We have a map 
$\psi:\mathcal{F}l^d_R(R/c\tilde{R})
\rightarrow \ppfhilbgeo(\C^2)$ given by
\begin{equation}
\label{eq:psiflag}
\psi(M_\bullet)=(I_0 \supset \cdots \supset I_{n}),
\ \ I_{j}=\evmap_{n,n+1}^{-1}(M_j).
\end{equation}
This is compatible with
the action of $U=\C^*_{n,n+1}$ on $\pfhilbgeo(\C^2)$.
\begin{prop}
\label{prop:exthilbflag}
The composition $\psi\circ \gext$ defines a map of varieties
\[\psi\circ\gext: \quad \hesspav(\tau)\rightarrow  \pfhilbgeo^d(\C^2)\]
where $d=\comaj(\tau)$.
The image of $Y^\circ_w$ for $w\in \Res(\tau)$ is the 
attracting set to $\parflag=(\psi\circ\extcomb)(w)$
for the action of $U$, and the fiber is an affine space of dimension $\maj(\tau)$.
\end{prop}

\begin{proof}
By Proposition \ref{prop:extmapflag} and the compatibility of
the action of $U$ with $\psi$, the image of each
$Y^\circ_w$ must be in the attracting set. On the other hand, if $I_\bullet$ is in the attracting set of
$\parflag=(\psi \circ\extcomb)(w)$ for 
$w\in \Res(\tau)$, then each $I^j$
is in the attracting set of $I_{\lambda^j}$. Since 
$B_{\lambda^j}(z^n,z^{n+1})$ has degree at most
$n^2-1$ for those partitions, 
we find that $I_j$ has initial terms of
$x^iy^j$ for all $i+j\geq n$, so that
$\mathfrak{m}^n\subset I_j$ for all $j$.
By Lemma \ref{lem:gcdker}, there is a
unique element
$M_j \in \Grr_R^{d+j}(R/c\tilde{R})$ whose image under 
$\psi$ is $I_j$ for each $j$. These unique elements must define a flag, and since $I_n \supset x I_0$, this flag may be extended to $M_\bullet \in \mathcal{F}l_R^d(R/c\tilde{R})$ by taking 
$M_{i}=\eps^n M_{i-n}$ for $i\geq n$.
Since $M_\bullet$ is in the attracting set 
of $\extcomb(w)$ for $U$,
it must be in the image of some element of
$\hesspav(\tau)$ under $\gext$ by Proposition 
\ref{prop:extmapflag} again. The final statement 
follows from that proposition, and since 
$\psi$ is injective on these elements.

\end{proof}

We now study these attracting sets, by 
proving analogues of Iarrobino and G\"{o}ttsche's results. 
Let $\pfghilbgeo(\C^2)$ be the space of flags of homogeneous ideals $I_j \in \ghilbgeo(\C^2)$, and denote the
components by
$\pfghilbgeo(\C^2)_{F^\bullet}$ in which
$I_j \in \ghilbgeo_{F^j}$, and similarly
for $\pfghilbgeo(\C^2)^+_{F^\bullet}$.

Given chain of ideals \(I_j\) from $\pfghilbgeo(\C^2)^+_{F^\bullet}$ we construct the chain homogeneous ideals
\(\mathrm{lt}(I_i)\) from \(\pfghilbgeo(\C^2)^+_{F^\bullet}\) by applying lower term extraction to the elements
of the ideal. Thus we obtain an algebraic retraction map:
\[ \varphi: \pfghilbgeo(\C^2)^+_{F^\bullet}\to \pfghilbgeo(\C^2)_{F^\bullet},\]
and will study the fibers of this map below.

Let $C^{a,b}_{\parflag}$
denote the attracting set for the action of $\C^*_{a,b}$ on $\pfhilbgeo(\C^2)$ as in the description of $C^{a,b}_{\lambda}$ from Proposition \ref{prop:escells}, and let
$G_{\parflag}^{a,b}=C^{a,b}_{\parflag} \cap \pfghilbgeo(\C^2)$ be homogeneous cell.
\begin{lemma}
\label{lem:pfhcells}
If $n>\deg(F^j(z))$ for all $j$, 
then the attracting sets $C^{n,n+1}_{\parflag}$
and $G^{n,n+1}_{\parflag}$ 
for $\hs_z(\lambda^j)=F^j(z)$
determine cell decompositions of 
$\pfghilbgeo(\C^2)^+_{F^{\bullet}}$ and 
$\pfghilbgeo(\C^2)_{F^\bullet}$ respectively.
They are realized as affine subspaces of the 
cell $C^{a,b}_{\blendlam}$
for $(a,b)=(n,(n+1)^2)$ as follows:
\begin{enumerate}[a)]
    \item The attracting cell
    $C^{n,n+1}_{\parflag}$ 
is determined by setting
$c^{r,s}_{i,j}=0$ in $C^{a,b}_{\blendlam}$
for  unless $r-i$ is a multiple
of $(n+1)$.
\item The homogeneous cell
$G^{n,n+1}_{\parflag}\subset C^{n,n+1}_{\parflag}$ is 
the result of setting $c^{r,s}_{i,j}=0$ unless 
$(r-i)+(n+1)(s-j)=0$.
\end{enumerate}

\end{lemma}

\begin{proof}
The attracting set
$C^{n,n+1}_{\parflag}$ is identified with the intersection of $C^{a,b}_{\blendlam}$ with
$\pfhilbgeo(\C^2)$ under \eqref{eq:blendedieal}, noting that the torus actions are compatible.
Now notice that both $\pfhilbgeo(\C^2)$ and 
$\pfghilbgeo(\C^2)$ can be described as the invariants of subgroups of $T$ 
in $C^{a,b}_{\blendlam}$,
and so are determined by conditions on the coordinates by the $T$-equivariance 
statement from Proposition \ref{prop:escells}.

To see that the union of $C_{\parflag}^{n,n+1}$
is $\pfghilbgeo(\C^2)^+_{F^\bullet}$, suppose that $I_\bullet$ is in the cell. Then each 
$I_j$ is in the attracting cell to $\lambda^j$, so by Proposition \ref{prop:gottsche} it lies in
$\ghilbgeo(\C^2)^+_{F^j}$. On the other hand, if 
$I_\bullet \in 
\pfghilbgeo(\C^2)^+_{F^\bullet}$, then each 
$I_j\in \ghilbgeo(\C^2)^+_{F^j}$, so by 
G\"{o}ttsche's result again we have that it is in
$C_{\lambda^j}$ for some collection of partitions
$\lambda^j$ which must necessarily form a flag,
and we find that $I_j \in C^{n,n+1}_{\parflag}$.
The argument is similar for 
$G^{n,n+1}_{\parflag}$.
\end{proof}

Applying Proposition \ref{prop:exthilbflag}, we must have $\dim(Y^{\circ}_w)=\maj(\tau)+\dim(C_{\parflag}^{n,n+1})$ for any
$w\in \Res(\tau)$. In light of Lemma 
\ref{lem:pfhcells} and Proposition \ref{prop:extstats}, the corresponding statement
in terms of parking functions is a
combinatorial identity
\begin{equation}
\label{eq:identitycells}    
\dinv(P)+\area(P)+
|\mathcal{C}_{\parflag}^{n,n+1}|=
n(n-1)/2,
\end{equation}
where $\parflag=(\psi\circ \extcomb)(P)$ and as in the non-flag case,
$\mathcal{C}_{\parflag}^{n,n+1}$ is the set of parameters of $C_{\parflag}^{n,n+1}$ as described
in the lemma. On the other hand, we will see in the next section that we have an identity
\begin{equation}
\label{eq:identityhess}    
\sum_{P\in \cars(\tau)} q^{|\mathcal{G}_{\parflag}^{n,n+1}|}=
\prod_i [\sched_i(\tau)]_q,\ \ 
\parflag=(\psi\circ \extcomb)(P).
\end{equation}
\begin{ex}
    For the final parking function 
    $P\in \cars(\tau)$ for $\tau=(3,1,2,5,4)$
    from Figure \ref{fig:cars}, we have 
    $\dinv(P)=2$ and $\area(P)=\maj(\tau)=5$.
The blended partition is given by
$\blendlam=(20,9,7,5,4)$,
    and coefficients
    $\mathcal{C}_{\parflag}^{5,6}$ are
$\{c^{2,4}_{20,0}, c^{3,4}_{9,1}, c^{1,4}_{7,2}\}$, giving a value of 10 in \eqref{eq:identitycells}.
\end{ex}

\begin{prop}
\label{prop:bundleflag}
We have that $\pfghilbgeo(\C^2)^+_{F^\bullet}$
is an affine bundle 
over a smooth base
$\pfghilbgeo(\C^2)_{F^\bullet}$.
The rank is
${n(n+1)/2}-\sum_i \sched_i(\tau)-\maj(\tau)$.
\end{prop}
\begin{proof}

By Lemma \ref{lem:iarrobinounion}, there is a change of variables $y=cx$ taking each
$I_j$ for $I_\bullet\in \pfghilbgeo(\C^2)_{F^\bullet}^+$
into one for which 
$I_j \in C_{\lambda^j}^{n,n+1}$, where 
$\lambda^j$ is the partition with a normal pattern. In fact, the collection of such values of $c$ is generic, and there is one carrying
$I_\bullet$ into
$C_{\parflag}^{n,n+1}$, where every $\lambda^j$ is normal, which in fact is open. 
We have seen that $C_{\parflag}^{n,n+1}\cong \mathbb{A}^k\times G_{\parflag}^{n,n+1}$ for such flags in Lemma \ref{lem:pfhcells}, so the first statement follows. 

The base is smooth because it is an invariant subspace of a smooth complex variety under an action of $\C^*$.
The remaining statement comes from checking the combinatorial identity that $|\mathcal{C}_{\parflag}^{n,n+1}|\setminus
|\mathcal{G}_{\parflag}^{n,n+1}|$ is the expression for the rank for the flag with normal pattern (or in fact, for all of them).
\end{proof}

\subsection{Relation with the 
Hessenberg varieties}

We describe the base space
$\pfghilbgeo(\C^2)_{F^\bullet}$
as a sublocus in a Hessenberg variety,
so that
the permutations satisfying $P_\sigma\in \fltaugeo(\tau)$ consists of the set
$\fltaucomb(\tau)$ from Definition
\ref{def:fltaucomb}. 

\begin{defn}
Let $\tau\in S_n$ have run composition 
$\mu=(\mu_1,...,\mu_k)$.
Let $\fltaugeo(\tau)\subset \gFl_n$ 
to be the subset of flags $(V_1\subset \cdots \subset V_n)$ satisfying
\begin{equation}
\label{eq:fltaudefi}
V_j \subset F^{opp}_{n-i} \mbox{ whenever
$j\leq \mu_1+\cdots+\mu_{k-i}$}    
\end{equation}
for $1 \leq i \leq k-1$.
We define 
\begin{equation}
    \label{eq:defhesstaugeo}
    \hessgeo(\tau)=\hessgeo(N,\hf_\tau)\cap \fltaugeo(\tau)
\end{equation} where
$N$ is the upper triangular nilpotent matrix with one Jordan block \eqref{eq:Ndef}, and $h_\tau$ is the Hessenberg function described from \eqref{eq:hftau}.
\end{defn}

In fact, $\mathcal{Z}(\tau)$ is a type of
Schubert variety called a tower of Grassmannians, 
which is always smooth.

\begin{ex}
    For $\tau=(6,4,5,3,2,1)$, we would have
$\mu=(1,3,2)$. The intersection of the top-dimensional Schubert cell
would be parametrized by
\begin{equation}
\label{eq:fltauschubex}    
A_{\sigma}=
 \left( \begin {array}{cccccc} 0&0&0&0&a_{{1,2}}&1\\ 0
&a_{{2,5}}&a_{{2,4}}&a_{{2,3}}&1&0\\ a_{{3,6}}&a_{{3,
5}}&a_{{3,4}}&1&0&0\\ a_{{4,6}}&a_{{4,5}}&1&0&0&0
\\ a_{{5,6}}&1&0&0&0&0\\ 1&0&0&0&0&0
\end {array} \right)
\end{equation}
The extra zeros in the upper left corner
are the result of removing the first 
$\mu_1=1$ columns from row 2, and the first $\mu_1+\mu_2=4$ columns from row 1.
\end{ex}

Recall the sequence of Hilbert series
$F^\bullet_\tau(z)$ from Section \ref{sec:hilbcomb}. We have
\begin{lemma}
\label{lem:hesshilb}    
We have an isomorphism 
$\pfghilbgeo(\C^2)_{F_\tau^\bullet}\cong \hessgeo(\tau)$.
\end{lemma}

\begin{proof}

We produce a map
$\Phi:\pfghilbgeo(\C^2)_{F_\tau^\bullet}\rightarrow 
\mathcal{F}l_n$ which extends the one from Lemma
\ref{lem:grasshilb}, and check that the image
lies in $\hessgeo(\tau)$.
Given $\tau$, let $\bc=(c_1,...,c_n)$ be the result of sorting $\majt(\tau)$ in reverse order, so that $c_i$ is the number of spaces from the diagonal of the boxed numbered $i$ in any parking function $P\in \cars(\tau)$. We define $\Phi(I_\bullet)=
(V_1\subset \cdots \subset V_n)$
where $V_n=\gr_{n-1} \C[x,y]$ by setting
\begin{equation}
\label{eq:Phimap}
V_i=x^{c_i} \gr_{n-c_i-1}(I_{n-\tau_i}). 
\end{equation}

    The nested condition for the ideals \(I_{\bullet}\) along the runs of $\tau$
    and the fact 
    $I_j \supset xI_i$ for all $i,j$
    implies that the $V_i$ are indeed nested. 
    The Hessenberg condition for the function $h_\tau$ is satisfied since
    $I_{n-\tau_i}\supset yI_{n-\tau_j}$ whenever
    $c_i=c_{j+1}$ and $\tau_j<\tau_i$. The image is in $\fltaugeo(\tau)$ since every element of $V_i$ is a multiple of $x^{c_i}$ by \eqref{eq:Phimap}, and it can be seen that the inverse construction is well defined, making it a bijection. Using Iarrobino's change of variables as in the proof of Proposition \ref{prop:bundleflag},
    we can check that both are maps of varieties using the in the coordinates $\mathcal{C}_{\parflag}^{n,n+1}$
    for the top-dimensional cell, and the ones 
    defined below on $\hessgeo(\tau)$.
\end{proof}

Putting this together gives our third main result, which was Theorem C from the introduction. 
\begin{thm}
\label{thm:geo}
For each $\tau\in S_n$, the space 
$\hesspav(\tau)$ is
a smooth quasi-projective variety isomorphic to a locally trivial affine bundle with base
$\hessgeo(\tau)$, which is a smooth projective variety. The rank of the bundle is
${ n(n+1)/2}-\sum_i \sched_i(\tau)$.
\end{thm}
We then have
\begin{cor}\label{cor:closure}
For any $\tau\in S_n$, we have an isomorphism \[G_{\majt(\tau)}\hbm_*(\Sp_{n,n+1})\cong
\hbm_*(\hessgeo(\tau))\] 
up to a grading shift.
  \end{cor}

\begin{proof}
We have that $G_{\majt(\tau)}\hbm_*(\Sp_{n,n+1})
\cong \hbm_*(\hesspav(\tau))$
by the long exact sequence in Borel-Moore homology,
and the fact that $\hesspav(\tau)$ has only even dimensional homology. The grading shift 
in $\hbm_*(\hesspav(\tau))\cong \hbm_*(\hessgeo(\tau))$
is the rank from Theorem \ref{thm:geo}.
\end{proof}

We now explain how to use this theorem to recover statements from Theorem \ref{thmb}.
\begin{lemma}
\label{lem:schedcells}
The image of $G^{n,n+1}_{\parflag}$
where $\parflag=(\psi\circ \extcomb)(w)$
under $\Phi$
is the intersected Schubert cell
$X^\circ_\sigma \cap \hessgeo(\tau)$, 
for $\sigma=\reshess_\tau(w)$.
Its coordinates may be described by { setting} 
$a_{1,\tau_j}=0$ for all $\tau_j$ not in the final run of $\tau$ from the set $\mathcal{A}^h_\sigma$ described in \eqref{eq:hesscoords}.

\end{lemma}

\begin{proof}
Let $\fltaugeo_t(\tau)\subset \gFl_n$ denote 
the collection of flags satisfying \eqref{eq:fltaudefi}
for $1\leq i \leq t$ so that $\fltaugeo(\tau)=\fltaugeo_{k-1}(\tau)$ where $k$ is the number of runs of $\tau$.
We claim that
\begin{equation}
\label{eq:wtauztau}
    \fltaugeo_1(\tau)\cap \hessgeo(h_\tau)=
\fltaugeo(\tau)\cap \hessgeo(h_\tau)=
\hessgeo(\tau).
\end{equation}
On the other hand, the intersection on the left hand 
side turns out to
intersect $\hessgeo(h_\tau)$ transversely.
Notice that this coincides with 
Lemma \ref{lem:eq:hessleq} by taking flags coming from permutation matrices.

To prove \eqref{eq:wtauztau}, it suffices to show that
$\fltaugeo_{t-1}(\tau) \cap \hessgeo(h_\tau) =\fltaugeo_t(\tau) \cap \hessgeo(h_\tau)$
for all $2\leq t \leq k-1$, for then the statement follows by induction.
For this, suppose $(V_1\subset \cdots \subset V_n)\in 
\fltaugeo_{t-1} \cap \hessgeo(h_\tau)$,
and let us show that
$V_j \subset F^{opp}_{n-t}$ for all
$j\leq \mu_1+\cdots+\mu_{k-t}$, establishing
\eqref{eq:fltaudefi} for $i\leq t$.
First, by the Hessenberg condition,
we have $N V_j \subset V_{h_\tau(j)}$.
%
On the other hand, we can easily check from 
the definition of $h_\tau$ 
given in \eqref{eq:hftau}
that
\begin{equation}
\label{eq:hftauineq}    
h_\tau(j)
\leq \mu_1+\cdots+\mu_{k-t+1}
\end{equation}
whenever $j\leq \mu_1+\cdots+\mu_{k-t}$,
so that $V_{h_\tau(j)}\subset F^{opp}_{n-t+1}$.
Putting these together, we find that 
$N V_j\subset F^{opp}_{n-t+1}$, which 
implies that
$V_j \subset F^{opp}_{n-t}$ since $N$ is injective on $V_j$.

Now notice that the relations in
\eqref{eq:hesscellform} take the form of
expressing certain coordinates in 
a Schubert cell $A_\sigma$ appearing in rows 2 and greater in terms of other variables.
On the other hand, the new relations describing
$\fltaugeo_1(\tau)$ amount to setting coordinates
in first row equal to zero, so the remaining ones give
a coordinate system.

\end{proof}

\begin{ex}
Let $\tau=(2,4,1,3)$ and let us consider the top-dimensional cell $A_\sigma$ for $\sigma=(4,3,2,1) \in \hesscomb(\tau)$.
In coordinates, the Schubert cell has the form
\begin{equation}
A_{\sigma}=
     \left( \begin {array}{cccc} a_{1,4}&a_{1,3}&a_{{1,2}}&1\\ a_{{2,
4}}&a_{{2,3}}&1&0\\ a_{{3,4}}&1&0&0
\\ 1&0&0&0\end {array} \right)
\end{equation}
Intersecting with $\hessgeo(h_\tau)$ gives 
the relation
$a_{2,4}=a_{1,3}+a_{1,2}\left( a_{{3,4}}-a_{{2,3}} \right)$
which forces $NV_1\subset V_3$,
whereas the conditions coming from $\fltaugeo_1(\tau)$ allows us
to eliminate the two upper-left coordinates
$a_{1,3}=a_{1,4}=0$.

In terms of the ideal, we have
\[I_0=(x^2+a_{{3,4}}xy+a_{{2,4}}{y}^{2},
xy+a_{{2,3}}{y}^{2},
x{y}^{2}+a_{{1,2}}{y}^{3},{y}^{3}) \]
subject to the above relation 
$a_{2,4}=a_{1,2}(a_{3,4}-a_{2,3})$.
This appears to have three degrees of freedom,
but we may actually eliminate the $a_{1,2}y^3$
using $y^3$, so there are only two. Since $a_{2,4}$ is determined by the others, we obtain a two-dimensional affine space, determining a cell in the projective plane 
describing the space of the $I_0$.

\end{ex}

\begin{cor}\label{cor:ftau}
  The variety \(\hessgeo(\tau)\) has Poincar\'{e} polynomial
  \[\sum_i \dim H^i(\hessgeo(\tau)) v^i=\prod_{j=1}^n 
  [\sched_j(\tau)]_{v^2}\]
  Moreover, the pushforward of the inclusion
  $i_*:H^*(\hessgeo(\tau))\rightarrow 
  H^*(\mathcal{F}l)$ using Poincar\'{e} duality is injective with image \((f_\tau(x))\subset H^*(\mathcal{F}l)\), where $f_\tau(x)$ is the polynomial from \eqref{eq:ftau}
  in the Chern classes.
\end{cor}

\begin{proof}
The Poincar\'{e} polynomial follows from taking the dimensions of the cells in the paving
and using Propositions \ref{prop:extstats} and \ref{prop:hessbij}.
For the second statement, we can check that the first factor in \eqref{eq:ftau} is the Euler class of 
$\mathcal{Z}_1(\tau) \supset \mathcal{Z}(\tau)$ from the proof of Lemma \ref{lem:schedcells}.
  The Hessenberg variety 
  \(\hessgeo(N,h_\tau)\) is the zero locus of the section
  \(N\) of the vector bundle whose fiber over  \(V_\bullet\) is \(\oplus_{i}\Hom(V_i/V_{i-1},V_{h_{  \tau}(i)+1})\), and we can see that its Euler class is the second factor. By Lemma \ref{lem:schedcells},
  we can see that the varieties intersect transversely, 
  so the Euler class of the intersection is the product of the two factors.

\end{proof}

\bibliographystyle{plain}

\end{document}